\title{Parabolic implosion for endomorphisms of $\C^2$}
\author{Fabrizio Bianchi}
\thanks{This research is partially supported by the ANR project LAMBDA, ANR-13-BS01-0002 and by the FIRB2012 grant ``Differential Geometry and Geometric Function Theory", RBFR12W1AQ 002.}
\newcommand{\Bb}{{\mathcal B}}
\newcommand{\Cc}{{\mathcal C}}
\newcommand{\Rr}{{\mathcal R}}
\newcommand{\Tt}{{\mathcal T}}
\newcommand{\Uu}{{\mathcal U}}
\def\eps{\varepsilon}
\newcommand{\mb}{\mathbb}
\newcommand{\R}{\mb R}
\newcommand{\C}{\mb C}
\newcommand{\N}{\mb N}
\newcommand{\D}{\mb D}
\renewcommand{\P}{\mb P}
\renewcommand{\phi}{\varphi}
\renewcommand{\epsilon}{\varepsilon}
\renewcommand{\bar}{\overline}
\renewcommand{\tilde}{\widetilde}
\def\({\left(}
\def\){\right)}
\DeclareMathOperator{\realpart}{Re}
\renewcommand{\Re}{\realpart}
\DeclareMathOperator{\imagpart}{Im}
\renewcommand{\Im}{\imagpart}
\def\b#1{\bar{#1}}
\def\t#1{\tilde{#1}}
\newcommand{\pa}[1]{\left(#1\right)}
\renewcommand{\bra}[1]{\left[#1\right]}
\newcommand{\abs}[1]{\left|#1\right|}
\newtheorem{teo*}{Theorem}
\newtheorem{teo}{Theorem}[section]
\newtheorem{defi}[teo]{Definition}
\newtheorem{cor}[teo]{Corollary}
\newtheorem{lemma}[teo]{Lemma}
\newtheorem{prop}[teo]{Proposition}
\newtheorem{remark}[teo]{Remark}
\newcommand{\fai}{\phi^\iota}
\newcommand{\fao}{\phi^o}
\newcommand{\aeps}{\abs{\eps}}
\begin{document}

\maketitle

\begin{abstract}
We give an estimate of the discontinuity of the large Julia set
for a perturbation of a class of maps tangent to the identity,
by means of a two-dimensional
Lavaurs Theorem.
We adapt to our situation a strategy due to Bedford, Smillie and Ueda in the semiattracting setting.
We also prove the discontinuity of the filled Julia set for such perturbations of
regular polynomials.
\end{abstract}

\section{Introduction and results}

Parabolic dynamics and the study of parabolic perturbations
have been at the heart of holomorphic dynamics
in the last couple of decades. 
Starting with the work by Lavaurs \cite{lavaurs},
the theory of parabolic implosion has provided
useful tools
for a very precise
control of
these perturbations and for the proof
of some of the most striking recent results in the field,
e.g.,
the construction of Julia sets of positive area \cite{buff2008quadratic,avila2015lebesgue},
significative steps toward the setting of the \emph{hyperbolicity conjecture}
for quadratic polynomials \cite{cheraghi2015satellite} and the construction
of endomorphisms of $\P^2 (\C)$ with a wandering domain \cite{astorg2016two}.
In particular, these techniques have proved extremely useful in
the study of
bifurcation
loci
(see, e.g., \cite{shishikura1998hausdorff}).

In several complex variables, the study
of parabolic perturbations and
the theory of parabolic implosion
are just at the start,
with
recent results only
in the semiattracting setting \cite{bsu, dujardin2013stability}.
The goal of this paper is to provide a starting point for an analogous theory
in the completely parabolic setting,
by a precise study of perturbations of germs of endomorphisms of $\C^2$
tangent to the identity at the origin.

Let us briefly recall the foundational results
of the one-dimensional theory. We refer to \cite{douady94}
for a more extended introduction to the subject, as well as to the original work by Lavaurs \cite{lavaurs}.
Consider the polynomial map on $\C$ tangent to the identity given by
$f(z)=z+z^2$. The origin is a \emph{parabolic}
fixed point for $f$. The dynamics is attracting near the negative real axis:
there exists a parabolic basin $\Bb$ for 0, i.e., an open set of points
converging to the origin after iteration. The origin is on the boundary of $\Bb$, and the convergence
happens tangentially to
the negative real axis. The iteration of $f$ on $\Bb$ is semiconjugated to a translation by 1. More precisely,
there 
exists an \emph{incoming Fatou coordinate} $\fai:\Bb\to \C$ such that, for every $z\in \Bb$, we have
$\fai \circ f (z)=f(z)+1 $.

The same happens for the inverse iteration near the positive real axis: we have a repelling
basin $\Rr$ of points converging to 0 under some
inverse iteration, and the convergence happens tangentially to
the positive real axis. We can construct in this case an \emph{outgoing Fatou parametrization},
i.e., a map $\psi^o:\C \to \Rr$ such that
$f\circ \psi^o (z) = \psi^o (z+1)$.
It is worth noticing here that
the union of
$\Bb$ and $\Rr$
gives a full pointed neighbourhood of the origin.

Notice that the
incoming Fatou coordinate is a map from the dynamical plane to $\C$, while
the outgoing Fatou parametrization
is a map from $\C$ to the dynamical plane.
In particular, given any $\alpha\in \C$
and denoting by $t_\alpha$
the translation by $\alpha$
on $\C$,
the composition $L_\alpha :=\psi^o \circ t_\alpha \circ \fai$
is well defined as a function from $\Bb$ to $\Rr$. Such a map is usually
called a \emph{Lavaurs map}, or a
\emph{transfer map}.

We consider now the perturbation $f_\eps (z)=z+z^2 + \eps^2$
of the system $f$, for $\eps$ real and positive. As $\eps\neq 0$,
the dynamics abruptly changes: the parabolic point splits in two
(repelling) points $\pm i\eps$, and the orbits of points in $\Bb$ can now pass through the ``gate"
between these two points.
Using the Lavaurs map it is possible
to give a very precise description of this phenomenon,
by studying
the dynamics of high iterates of the perturbed maps $f_\eps$, as $\eps\to 0$.
The following definition plays a central role
in this study.

\begin{defi}\label{defi_alpha_sequence}
Given $\alpha \in \C$,
an \emph{$\alpha$-sequence} is a sequence
$(\eps_\nu, n_\nu)_{\nu \in \N} \in \pa{\C\times \N}^\N$
such that $n_\nu \to \infty$
and
$n_\nu - \frac{\pi}{\eps_\nu}\to \alpha$ as $\nu\to\infty$.
\end{defi}
Notice in particular that the definition of $\alpha$-sequence
implies that $\eps_\nu$ tends to the origin
tangentially to the positive real axis. More precisely, there exists
a constant $c$ such that, for every $\nu$ sufficiently large,
we have
$\abs{\Im \eps_\nu} \leq c \abs{\eps_\nu}^2$.
The following result gives the limit description of suitable high iterates of
$f_\eps$.

\begin{teo}[Lavaurs \cite{lavaurs}]\label{teo_lavaurs_convergence}
Let $f_\eps (z)= z+z^2 + \eps^2 + o(z^2, \eps^2)$ and $(\eps_\nu, n_\nu)$
be an $\alpha$-sequence.
Then $f^{n_\nu}_{\eps_\nu} \to L_\alpha$, locally uniformly on $\Bb$.
\end{teo}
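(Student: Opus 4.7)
The plan is to construct, for each sufficiently small $\eps$, Fatou-type coordinates for $f_\eps$ that (i) converge as $\eps \to 0$ to $\fai$ on $\Bb$ and to $\Faon$ on $\Rr$, and (ii) differ from each other by exactly $\pi/\eps + o(1)$ across the ``gate'' between the two nearby fixed points of $f_\eps$. The natural setting is the coordinate $w = -1/z$, in which $f$ reads $w \mapsto w + 1 + O(1/w)$ and $f_\eps$ reads $w \mapsto w + 1 + \eps^2 w^2 + \text{l.o.t.}$; the gate dynamics is modelled by the Riccati vector field $\dot w = 1 + \eps^2 w^2$, whose flow sends suitable complex trajectories from $-\infty$ to $+\infty$ in time exactly $\pi/\eps$.

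First, I would establish the existence of an incoming petal $P^\iota_\eps \subset \Bb$ and an outgoing petal $P^o_\eps \subset \Rr$, both of size independent of $\eps$, together with an incoming Fatou coordinate $\fai_\eps$ on $P^\iota_\eps$ and an outgoing Fatou parametrization $\Faon_\eps$ onto $P^o_\eps$, satisfying $\fai_\eps \circ f_\eps = \fai_\eps + 1$ and $f_\eps \circ \Faon_\eps = \Faon_\eps \circ t_1$. These are obtained in the $w$-coordinate by Fatou's classical normal-families construction, on regions where $\Re w$ is strongly negative (resp.\ positive) so that $\eps^2 w^2$ and the nonlinear remainder are uniformly small there. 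Extending $\fai_\eps$ to all of $\Bb$ and $\Faon_\eps$ to all of $\C$ by the dynamics, the same scheme yields the convergences $\fai_\eps \to \fai$ and $\Faon_\eps \to \Faon$ locally uniformly on $\Bb$ and $\C$, respectively.

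The central step is the analysis of the transit across the gate. Rescaling by $W = \eps w$, the map $f_\eps$ becomes, up to a time rescaling by $\eps$, an $\eps$-step discretization of the vector field $\dot W = 1 + W^2$, whose flow sends $-\infty$ to $+\infty$ in time $\pi$. Introducing the outgoing Fatou coordinate $\fao_\eps$ as the inverse of $\Faon_\eps$, and extending $\fai_\eps$ and $\fao_\eps$ to a common intermediate region inside the gate, a Gronwall-type control of the error accumulated over the $\sim \pi/\eps$ iterations needed to cross yields the key identity
\[
\fao_\eps(z) - \fai_\eps(z) = -\frac{\pi}{\eps} + o(1) \quad \text{as } \eps \to 0,
\]
uniformly on compact subsets of the common domain.

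To conclude, fix $z \in \Bb$. Combining the functional equations for $\fai_\eps$ and $\Faon_\eps$ with the transit estimate gives
\[
f_{\eps_\nu}^{n_\nu}(z) = \Faon_{\eps_\nu}\!\left(\fai_{\eps_\nu}(z) + n_\nu - \frac{\pi}{\eps_\nu} + o(1)\right),
\]
and the $\alpha$-sequence hypothesis $n_\nu - \pi/\eps_\nu \to \alpha$ together with $\fai_{\eps_\nu} \to \fai$ and $\Faon_{\eps_\nu} \to \Faon$ yields $f_{\eps_\nu}^{n_\nu}(z) \to \Faon(\fai(z) + \alpha) = L_\alpha(z)$, locally uniformly on $\Bb$. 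The main obstacle is the transit analysis: the orbit spends $\sim \pi/\eps$ iterations in a region where a single step of $f_\eps$ is \emph{not} a small perturbation of the model flow, and controlling the cumulative error over so many iterations requires both the $W = \eps w$ rescaling and fine estimates in a thin transit strip around the imaginary $w$-axis.
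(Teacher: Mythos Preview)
The paper does not prove this statement: Theorem \ref{teo_lavaurs_convergence} is quoted as background from Lavaurs \cite{lavaurs}, with no proof given. Your sketch is a correct outline of the classical argument, and there is nothing in the paper to compare it against directly.

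That said, the machinery the paper builds for its two-dimensional Theorem \ref{teo_lav_2d} restricts on the invariant line $\{y=0\}$ to a proof of the one-dimensional result, and it is organised somewhat differently from your sketch. Rather than working in $w=-1/z$ and performing a separate rescaling $W=\eps w$ for the gate transit, the paper introduces from the start the single coordinate $u_\eps(x)=\frac{1}{\eps}\arctan(x/\eps)$ (see \eqref{eq_def_u_eps}), in which $f_\eps$ is already approximately a translation by $1$ uniformly on both petals \emph{and} through the gate (Lemma \ref{lemma_trasl}). The incoming and outgoing approximate Fatou coordinates are then simply $u_\eps$ plus a logarithmic correction and the normalizing constants $\pm\pi/(2\eps)$, so that your transit identity $\fao_\eps - \fai_\eps = -\pi/\eps + o(1)$ becomes the purely algebraic relation \eqref{eq_semiconj}; all the analytic effort is shifted into Theorem \ref{lemma_conv_fai}, namely showing that $\tilde\fai_{\eps_\nu,m_\nu}\to\tilde\fai$ by summing the errors $A_\eps$ along the full orbit. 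The two approaches are essentially equivalent---your Gronwall-type transit control is traded for a uniform bound on $A_\eps$---but the $\arctan$ packaging avoids splicing three regimes (incoming petal, gate, outgoing petal) and is what the paper exploits to handle the extra $y$-coordinate in the two-dimensional problem.
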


One of the most direct consequences of Lavaurs theorem
is the fact that the set-valued functions
$\eps\mapsto J(f_\eps)$
and $\eps \mapsto K(f_\eps)$ are
discontinuous for the Hausdorff topology
at $\eps =0$.
 Here
$J(f_\eps)$
 and $K(f_\eps)$
denote the Julia set
 and the filled Julia set
of $f_\eps$, respectively (recall -- see e.g. \cite{douady94} -- that $J(f_\eps)$
is always lower semicontinuous, while $K(f_\eps)$
is always upper semicontinuous).
More precisely, define the \emph{Lavaurs-Julia set} $J (f_0, L_\alpha)$
and the \emph{filled Lavaurs-Julia set} $K(f_0, L_\alpha)$
by
\[
\begin{aligned}
& J (f_0,L_\alpha):= \b{ \set{z\in \C \colon \exists m \in \N, L^m_\alpha (z) \in J (f_0)}}\\
& K(f_0, L_\alpha):= \set{z\in \C \colon \exists m\in \N, L^m_\alpha (z)\notin K(f_0) }^c
\end{aligned}
\]
Notice that the Lavaurs-Julia set $J(f_0, L_\alpha)$
is in general larger than the Julia set of $f_0$.
On the other hand, the set $K(f_0, L_\alpha)$
is in general smaller than $K(f_0)$.
The following Theorem
then gives an estimate of the discontinuity of the maps $\eps \mapsto J(f_\eps)$ at $\eps=0$.

\begin{teo}[Lavaurs \cite{lavaurs}]\label{teo_lavaurs_disc_JK}
Let $f_\eps (z)= z+z^2 + \eps^2 + o(z^2+\eps^2)$ and $(\eps_\nu, n_\nu)$ be an $\alpha$-sequence.
Then
\[
\liminf J (f_{\eps_\nu}) \supset J(f_0, L_\alpha)
\mbox{ and }
\limsup K(f_{\eps_\nu}) \subset K(f_0, L_\alpha)
\]
In particular, at $\eps=0$,
\begin{enumerate}
\item the map $\eps \to J(f_\eps)$ is lower semicontinuous, but not continuous;
\item the map $\eps \to K(f_\eps)$ is upper semicontinuous, but not continuous.
\end{enumerate}
\end{teo}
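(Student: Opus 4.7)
The plan is to deduce both inclusions from the Lavaurs convergence theorem (Theorem \ref{teo_lavaurs_convergence}) combined with the (stated) lower/upper semicontinuity of the set-valued maps $\eps \mapsto J(f_\eps)$ and $\eps \mapsto K(f_\eps)$, and then to exhibit $\alpha$ for which the Lavaurs-Julia and filled Lavaurs-Julia sets differ strictly from their unperturbed counterparts.

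I would first establish the Julia inclusion. By definition of $J(f_0, L_\alpha)$ as a closure, it suffices to take $z \in \C$ for which some iterate $L_\alpha^m(z)$ lies in $J(f_0)$; in particular $z, L_\alpha(z), \dots, L_\alpha^{m-1}(z)$ must belong to the attracting basin $\Bb$. Starting from a small disk $U$ around $z$ with $\bar U \subset \Bb$, an induction on $m$ based on Theorem \ref{teo_lavaurs_convergence} (feeding the image $f^{n_\nu}_{\eps_\nu}(U)$, which is close to $L_\alpha(U)\subset \Bb$ for $\nu$ large, back into the domain of Lavaurs convergence) yields $f^{m n_\nu}_{\eps_\nu} \to L_\alpha^m$ uniformly on a possibly smaller neighbourhood of $z$. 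Lower semicontinuity of $J$ at $\eps=0$ then gives points $y_\nu \in J(f_{\eps_\nu})$ with $y_\nu \to L_\alpha^m(z) \in J(f_0)$. Since $L_\alpha^m$ is holomorphic and non-constant, $z$ is an isolated zero of $L_\alpha^m - L_\alpha^m(z)$, and a Hurwitz (equivalently Rouché) argument applied to $f^{m n_\nu}_{\eps_\nu} - y_\nu$ on $U$ produces $z_\nu \to z$ with $f^{m n_\nu}_{\eps_\nu}(z_\nu) = y_\nu$; total invariance of $J(f_{\eps_\nu})$ under $f_{\eps_\nu}$ forces $z_\nu \in J(f_{\eps_\nu})$, so $z \in \liminf J(f_{\eps_\nu})$.

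For the filled Julia inclusion I would argue contrapositively: assuming $z \notin K(f_0, L_\alpha)$, one seeks a neighbourhood of $z$ disjoint from $K(f_{\eps_\nu})$ for $\nu$ large. By hypothesis there is $m$ with $L_\alpha^m(z) \notin K(f_0)$, hence some further iterate $f_0^k(L_\alpha^m(z))$ exceeds the escape radius $R$ of $f_0$. The same inductive passage gives $f^{m n_\nu}_{\eps_\nu} \to L_\alpha^m$ uniformly on a neighbourhood $U$ of $z$, and joint continuity of $(f_\eps)^k$ in $(\eps, w)$ then yields $f^{k + m n_\nu}_{\eps_\nu}(w) \to f_0^k(L_\alpha^m(z))$ uniformly for $w \in U$. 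For $\nu$ large the modulus of that iterate exceeds $R$ at every $w \in U$, so $w \notin K(f_{\eps_\nu})$, proving $U \cap \limsup K(f_{\eps_\nu}) = \emptyset$.

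Finally, the strict discontinuity statements (1) and (2) reduce to exhibiting $\alpha$ for which $J(f_0, L_\alpha) \supsetneq J(f_0)$ and $K(f_0, L_\alpha) \subsetneq K(f_0)$. The repelling basin $\Rr$ accumulates at the parabolic fixed point and therefore meets $J(f_0)$; since $L_\alpha(\Bb) = \Rr$, any $z \in \Bb$ with $L_\alpha(z) \in J(f_0) \cap \Rr$ satisfies $z \in J(f_0, L_\alpha)$ while $z \notin J(f_0)$ (as $\Bb$ is in the Fatou set), and a symmetric argument handles $K$. Combined with the already-known semicontinuity this gives the two claims. The main technical step I expect to be delicate is the iterated Lavaurs convergence $f^{m n_\nu}_{\eps_\nu} \to L_\alpha^m$: one must propagate uniform convergence through $m$ Lavaurs steps while keeping all intermediate images strictly inside $\Bb$, which requires the $L_\alpha$-orbit of $z$ to stay away from the Julia set for times $j<m$ -- a condition that is harmless for points in the interior of the defining set of $J(f_0,L_\alpha)$ but which dictates taking closures in the definition.
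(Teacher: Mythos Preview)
Your proposal is correct and follows essentially the same strategy the paper uses. Note that the paper does not actually prove Theorem~\ref{teo_lavaurs_disc_JK} itself---it is cited as a classical result of Lavaurs---but the paper's proofs of the two-dimensional analogues (Theorems~\ref{teo_liminf_J} and~\ref{teo_limsup_K} and the Corollary following it) proceed exactly along the lines you describe: lower semicontinuity of $J$ plus openness of the Lavaurs map to pull back Julia points, and upper semicontinuity of $K$ plus the Lavaurs convergence to push points outside $K$. Your Hurwitz/Rouch\'e step is precisely the one-dimensional incarnation of the ``$T_\alpha$ is open'' argument the paper invokes in the $\C^2$ setting, and your careful remark about the iterated convergence $f_{\eps_\nu}^{m n_\nu}\to L_\alpha^m$ (keeping intermediate images inside $\Bb$) is in fact more explicit than what the paper writes, where this is simply asserted. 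The only cosmetic difference is that for the $K$ inclusion the paper packages the argument via Douady's compactness criterion rather than your direct escape-radius computation, but the content is the same.
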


The goal of this paper is to make
a step toward the generalization of
Theorems \ref{teo_lavaurs_convergence}
and \ref{teo_lavaurs_disc_JK}
to the two-variable setting, by studying the perturbation of
a class of maps tangent to the identity (i.e., with differential at a fixed point equal to the identity).
More precisely, we
consider
an endomorphism of $\C^2$ of the form 
(see at the end of the introduction for the notation used)
\begin{equation}\label{eq_F0}
F_0
\left(
\begin{array}{c}
 x\\
 y
\end{array}\right)
=
\left(\begin{array}{c}
       x+ x^2 (1+ (q+1)x + ry +O(x^2, xy,y^2))\\
       y (1+ \rho x + O(x^2,xy,y^2))
      \end{array}\right),
\end{equation}
where $\rho$ is real and greater than 1 and $q,r\in \C$.
For instance, $F_0$ may be the local expression of
an endomorphism of $\P^2$
(e.g., if the two components of $F_0$
are polynomials of the same degree in $(x,y)$
with 0 as the only common root
of their higher-degree
homogeneous parts).
We shall primarily be interested in this situation.

The map $F_0$ has a fixed point
tangent to the identity at the origin, and two invariant lines
$\set{x=0}$ and $\set{y=0}$.
By the work of Hakim \cite{hakim97}
(see
Section \ref{section_prelim}) we know that
$[1:0]$ is a \emph{non-degenerate
characteristic direction},
and that there exists an open set $\Bb$
of initial conditions, with the origin on the boundary, such that
every point in $\Bb$
is attracted to the origin tangentially to the direction $[1:0]$.
Moreover there exists, on an open subset $\t C_0$ of $\Bb$,
a (one dimensional)
Fatou coordinate $\t \fai$, with values in $\C$, such that
$\t  \fai \circ F_0 (p) =\t \fai(p)+1$ (see Lemma  \ref{lemma_prima_coord}). 

A similar description holds
for the inverse map.
Indeed, after restricting ourselves to a neighbourhood $U$
of the origin where $F_0$
is invertible,
we can define the set $\Rr$ of point that are attracted to the origin tangentially to the
direction $[1:0]$
by backward iteration. There is then a well defined map
$\t \fao: -\t C_0 \cap U \to \C$
such that $\t \fao \circ F_0 (p) = \t \fao (p)+1$ whenever the left hand side is defined.
It is actually possible to construct two-dimensional Fatou coordinates (see \cite{hakim97}),
but we shall not need them in
this work.

Consider now a perturbation $F_\eps$ of $F_0$
of the form
\begin{equation}\label{eq_family}
\begin{aligned}
F_\eps
\left(
\begin{array}{c}
 x\\
 y
\end{array}\right)
&=
\left(
\begin{array}{c}
 x+(x^2 +\eps^2) \alpha_\eps (x,y)\\
       y (1+ \rho x + \beta_\eps (x,y))
\end{array}
\right)
 \\
& =
\left(\begin{array}{c}
       x+(x^2 +\eps^2)(1+ (q+1)x +ry + O(x^2,xy,y^2) + O(\eps^2))\\
       y (1 + \rho x + O(x^2,xy,y^2) + O(\eps^2) )
      \end{array}\right).
      \end{aligned}
\end{equation}
Our goal is to study the dependence of the large Julia
set\footnote{i.e., the complement of the Fatou set, which in general is larger
than the Julia set
defined as the support of the equilibrium measure for endomorphisms of $\P^2$, see \cite{ds_cime}.}
$J^1 (F_\eps)$
on $\eps$ near the parameter $\eps=0$.
Our main result is the following Theorem, which is a partial
generalization of Theorem \ref{teo_lavaurs_convergence}
to our setting.
As in dimension 1, \emph{$\alpha$-sequences} play a crucial role. The set $\t C_0$
introduced above will be precisely defined in Proposition \ref{prop_hakim_26}, and the Fatou coordinates $\t \fai$
and $\t \fao$ in Lemma \ref{lemma_prima_coord}.

\begin{teo}\label{teo_lav_2d}
 Let
 $F_\eps$ be a holomorphic family of endomorphisms of $\C^2$ as in \eqref{eq_family}.
 Let $F_0$ be invertible on a neighbourhood $U$ of the origin
 and let $\t \fai: \t C_0 \to \C$ and
 $\t \fao: - \t C_0 \to \C$
 be the (1-dimensional) Fatou coordinates for $F_0$.
 Let $\Bb$ be the attracting basin
 for the origin for the map $F_0$ with respect to the characteristic direction $[1:0]$ and $\Rr$
 the repelling one.
 Let $\alpha$ be a complex number
 and $(n_\nu, \eps_\nu)$
 be an $\alpha$-sequence.
 Then
 every
compact subset $\Cc \subset \Bb \cap \set{y=0}$ 
 has a neighbourhood $U_\Cc$
 where,
 up to extracting a subsequence, we have
 \[
 F^{n_\nu}_{\eps_\nu} \to T_\alpha
 \]
 locally uniformly,
 where $T_\alpha$ is a well defined open holomorphic map from $U_{\Cc}$ to $\C^2$, with values in $\Rr$.
 Moreover,
 \begin{equation}\label{eq_teo_lav_2d_semiconj}
 \t \fao \circ T_\alpha (p) = \alpha + \t \fai (p)
 \end{equation}
whenever
both sides are defined.
\end{teo}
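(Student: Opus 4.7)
The plan is to adapt to the fully parabolic setting of \eqref{eq_family} the strategy used by Bedford, Smillie and Ueda \cite{bsu} in the semiattracting case. The starting observation is that the invariant line $\set{y=0}$ carries a genuine one-dimensional parabolic perturbation, namely $x\mapsto x+(x^2+\eps^2)(1+(q+1)x+O(x^2)+O(\eps^2))$, to which Theorem \ref{teo_lavaurs_convergence} applies directly: this already gives the convergence of $F^{n_\nu}_{\eps_\nu}|_{\set{y=0}}$ to the one-dimensional Lavaurs map $L_\alpha$ characterised by $\t\fao\circ L_\alpha=\alpha+\t\fai$. The content of the theorem is therefore to propagate this one-dimensional statement to a full two-dimensional neighbourhood $U_\Cc$ of $\Cc$, identifying the limit with the open map $T_\alpha$ satisfying \eqref{eq_teo_lav_2d_semiconj}.

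\textbf{Step 1: perturbed approximate Fatou coordinates.} For each sufficiently small $\eps$ I would construct an incoming Fatou coordinate $\Phi^\iota_\eps$ and an outgoing Fatou parametrisation $\Psi^o_\eps$ for $F_\eps$, defined on open sets of $\C^2$ whose $x$-projections exhaust the corresponding one-dimensional incoming/outgoing regions as $\eps\to 0$, and satisfying $\Phi^\iota_\eps\circ F_\eps=\Phi^\iota_\eps+1$ and $F_\eps\circ \Psi^o_\eps(w,\cdot)=\Psi^o_\eps(w+1,\cdot)$. The one-dimensional models are the classical changes of variable built from $w=\frac{1}{2\eps}\log\frac{\eps-ix}{\eps+ix}$, corrected by the cubic/mixed terms $(q+1)x+ry+\ldots$; the transverse variable $y$ enters these coordinates holomorphically as a parameter. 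Comparing on $\set{y=0}$ with the unperturbed Fatou coordinates then yields $\Phi^\iota_{\eps_\nu}\to \t\fai$ and $\Psi^o_{\eps_\nu}\to (\t\fao)^{-1}$ locally uniformly as $\nu\to\infty$ along the given $\alpha$-sequence, the tangential condition $|\Im\eps_\nu|=O(|\eps_\nu|^2)$ from Definition \ref{defi_alpha_sequence} being used to guarantee that the image of the incoming strip by $\Phi^\iota_{\eps_\nu}$ lands in the domain of $\Psi^o_{\eps_\nu}$.

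\textbf{Step 2: control of the transverse direction.} This is the main new two-dimensional difficulty. For $p=(x_0,y_0)\in U_\Cc$ write $(x_n,y_n):=F^n_{\eps_\nu}(p)$ and $(\xi_n,0):=F^n_{\eps_\nu}(x_0,0)$. I want the a priori estimates $|y_n|\le C|y_0|$ and $|x_n-\xi_n|\le C|y_0|$, uniformly in $0\le n\le n_\nu$ and in $\nu$. The second coordinate of \eqref{eq_family} gives
\[
\log\tfrac{|y_n|}{|y_0|}=\sum_{k=0}^{n-1}\Re\log\bigl(1+\rho x_k+O(x_k^2,x_ky_k,y_k^2)+O(\eps_\nu^2)\bigr),
\]
so the control reduces to bounding $\sum \Re(x_k)$ along the orbit. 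On the incoming phase one has $\Re(x_k)\sim -1/k$, on the outgoing phase $\Re(x_k)\sim 1/(n_\nu-k)$, and on the transit phase near the gate $|x_k|\sim|\eps_\nu|$ the tangential condition on $\eps_\nu$ ensures a bounded contribution. Because $\rho>1$, the resulting two harmonic-type partial sums converge as $\nu\to\infty$, so $|y_n/y_0|$ stays bounded and in fact decays like $|\xi_n|^{\rho-1}$ on the incoming side; feeding this back into the first coordinate closes the bootstrap. This step is where the hypothesis $\rho>1$ is used in an essential way: near the gate the transverse direction is neither attracting nor repelling, and only the integrated estimate over the whole long orbit yields control.

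\textbf{Step 3: gluing.} With these estimates in hand, one fixes large integers $N_1,N_2$ and decomposes, for $p\in U_\Cc$,
\[
F^{n_\nu}_{\eps_\nu}(p)=F^{N_2}_{\eps_\nu}\circ \Psi^o_{\eps_\nu}\circ t_{m_\nu}\circ \Phi^\iota_{\eps_\nu}\circ F^{N_1}_{\eps_\nu}(p),
\]
where $m_\nu=n_\nu-N_1-N_2+O_{N_1,N_2}(1)$ tends, up to the fixed normalisation offsets of $\Phi^\iota_\eps$ and $\Psi^o_\eps$, to $\alpha$ by the $\alpha$-sequence hypothesis. Passing to a subsequence to fix the freedom in the $y$-dependence of the approximate coordinates, the right-hand side converges locally uniformly on $U_\Cc$ to a holomorphic map $T_\alpha$ with image contained in $\Rr$, and the semiconjugacy \eqref{eq_teo_lav_2d_semiconj} is an immediate consequence of the analogous relations for $\Phi^\iota_\eps$ and $\Psi^o_\eps$ combined with the invariance of $\t\fai,\t\fao$ under $F_0$. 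Openness of $T_\alpha$ follows from the fact that its restriction to $\set{y=0}$ is the non-constant one-dimensional Lavaurs map $L_\alpha$ together with the transverse non-degeneracy supplied by Step 2. The main obstacle throughout is Step 2: everything else in the argument is a fibred version of the classical one-dimensional Douady--Lavaurs construction.
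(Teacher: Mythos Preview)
Your outline follows the same Bedford--Smillie--Ueda blueprint as the paper, but Step~2 contains a real gap. The sentence ``because $\rho>1$, the resulting two harmonic-type partial sums converge as $\nu\to\infty$'' is not right: each of $\sum_{k\text{ incoming}}\Re(x_k)$ and $\sum_{k\text{ outgoing}}\Re(x_k)$ diverges logarithmically (to $-\infty$ and $+\infty$ respectively), and $\rho$ plays no role in this. The boundedness of $\log|y_{n_\nu}/y_0|$ comes instead from a \emph{cancellation}: the contraction in the incoming phase must dominate the expansion in the outgoing one, i.e.\ the incoming phase must be at least as long as the outgoing phase. The paper enforces this by hand, via the deliberately asymmetric gate region $D_\eps$ in \eqref{eq_zona_de} (left cutoff at $K/|\eps|$, right cutoff at $K/2|\eps|$), and then checks in the proof of Lemma~\ref{lemma_stima_verticale_globale} that $n_p(\eps)+K_1\ge n_\nu-n'_p(\eps)$. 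Your sketch contains no mechanism of this kind, so the bootstrap does not close. The hypothesis $\rho>1$ is used, but elsewhere: it makes $|y(F^j_\eps(p))|\lesssim j^{-\tilde\rho}$ with $\tilde\rho>1$ on the incoming side, hence $\sum_j|y(F^j_\eps(p))|<\infty$ (Lemma~\ref{lemma_serie}), which is what is needed for the convergence of the approximate Fatou coordinates in Theorem~\ref{lemma_conv_fai}.

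Two smaller points. First, the paper never constructs exact perturbed Fatou coordinates as your Step~1 promises: the best approximate coordinate $\tilde w_\eps$ still carries an error $A_\eps=ry+O_2(x,y)+O(\eps^2)$ linear in $y$ (Proposition~\ref{prop_error_A}), and the whole content of Theorem~\ref{lemma_conv_fai} is to control the accumulated error $\sum_j A_\eps(F^j_\eps(p))$ along half the orbit. Your Step~3 factorisation through an exact $\Psi^o_{\eps}$ therefore hides an uncontrolled term; the paper instead splits $n_\nu=m^\iota_\nu+m^o_\nu$ into two bounded-type pieces and passes to the limit in the identity $\tilde\phi^o_{\eps_\nu,m^o_\nu}\circ F^{n_\nu}_{\eps_\nu}=\tilde\phi^\iota_{\eps_\nu,m^\iota_\nu}+(n_\nu-\pi/\eps_\nu)$ (equation \eqref{eq_semiconj}). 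Second, openness does not follow from ``restriction to $\{y=0\}$ is non-constant plus transverse non-degeneracy''; the paper obtains it instead by noting that the same construction applied to the inverse family $H_\eps$ of \eqref{eq_family_he} produces a holomorphic local inverse for $T_\alpha$.
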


As a consequence, we shall deduce an estimate
of the
discontinuity
of the large Julia set in this context
(notice that the discontinuity itself
follows from an application of Theorem
\ref{teo_lavaurs_convergence}
to the invariant line $\set{y=0}$).
We say that, given $\Uu\subset \t C_0$, a map $T_\alpha: \Uu \to \C^2$
is a \emph{Lavaurs map} if there exists an $\alpha$-sequence $(\eps_\nu, n_\nu)$
such that
$F_{\eps_\nu}^{n_\nu}\to T_\alpha$
on $\Uu$.
We then have the following result
(see Section \ref{section_disc_J} for the definition
of the Lavaurs-Julia sets $J^1(F_0, T_\alpha)$
in this setting).

\begin{teo}\label{teo_disc_J}
Let $F_\eps$ be a holomorphic family of endomorphisms of $\P^2$ as in \eqref{eq_family}
and $T_\alpha:\Uu\to\C^2$ be a Lavaurs map such that $F_{\eps_\nu}^{n_\nu}\to T_\alpha$
on $\Uu$ for some $\alpha$-sequence $(\eps_\nu, n_\nu)$.
Then
\[
 \liminf J^1 (F_{\eps_\nu}) \supset J^1 (F_0, T_\alpha).
 \]
\end{teo}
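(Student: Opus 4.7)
\medskip

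\noindent\textbf{Proof plan for Theorem \ref{teo_disc_J}.} The strategy is a two-dimensional adaptation of the standard Lavaurs argument: combine the convergence $F_{\eps_\nu}^{n_\nu}\to T_\alpha$ (Theorem \ref{teo_lav_2d}) with the lower semicontinuity of the large Julia set and the total invariance of $J^1$ under the dynamics. By the analogue in our setting of the one-dimensional definition, $J^1(F_0, T_\alpha)$ should be the closure of the set of points $p\in \Uu$ for which some iterate $T_\alpha^m(p)$ is well defined and lies in $J^1(F_0)$. Since $\liminf J^1(F_{\eps_\nu})$ is automatically closed, it suffices to prove that every such $p$ is a limit of points $p_\nu \in J^1(F_{\eps_\nu})$.

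\medskip

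First I would upgrade Theorem \ref{teo_lav_2d} to an iterated version: $F_{\eps_\nu}^{mn_\nu}\to T_\alpha^m$ locally uniformly on a neighbourhood $V$ of $p$. The base case $m=1$ is the theorem itself; the inductive step follows because, by assumption, each intermediate point $T_\alpha^{k}(p)$ lies in a domain to which Theorem \ref{teo_lav_2d} applies, and local uniform convergence composes well with continuous limits. Next, since $T_\alpha^m(p)\in J^1(F_0)$ and the large Julia set is lower semicontinuous in $\eps$ (equivalently, the Fatou set is upper semicontinuous, as equicontinuity passes to Hausdorff limits), one can find a sequence $q_\nu \in J^1(F_{\eps_\nu})$ with $q_\nu \to T_\alpha^m(p)$.

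\medskip

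The final step is to pull $q_\nu$ back to a point $p_\nu$ near $p$. Since $T_\alpha$ is a non-constant holomorphic map between complex surfaces, it is open, and hence so is $T_\alpha^m$ on the subset of $V$ where it is defined. Thus $T_\alpha^m(V)$ contains an open neighbourhood $W$ of $T_\alpha^m(p)$. A several-variable open-mapping argument, using the uniform convergence $F_{\eps_\nu}^{mn_\nu}\to T_\alpha^m$ on $\overline V$ (for instance via the stability of the local topological degree under perturbation of proper holomorphic maps), shows that $F_{\eps_\nu}^{mn_\nu}(V)\supset W'$ for some fixed neighbourhood $W'\subset W$ of $T_\alpha^m(p)$ and every $\nu$ large enough. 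Since $q_\nu \to T_\alpha^m(p)\in W'$, eventually we find $p_\nu\in V$ with $F_{\eps_\nu}^{mn_\nu}(p_\nu)=q_\nu$; the total invariance of $J^1(F_{\eps_\nu})$ then forces $p_\nu\in J^1(F_{\eps_\nu})$. Shrinking $V$ around $p$, we conclude $p_\nu\to p$, as desired.

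\medskip

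I expect the main obstacle to be the several-variable open-mapping / preimage argument. In dimension one it is settled by a trivial Rouché argument, but in $\C^2$ one needs either to appeal to the continuity of the mapping degree of proper holomorphic maps or to a suitable Hurwitz-type theorem; care is also needed at points where $T_\alpha^m$ has vanishing Jacobian. A secondary technical point is to make sure the definition of $J^1(F_0, T_\alpha)$ in Section \ref{section_disc_J} is compatible with the inductive construction of $T_\alpha^m$, i.e.\ that the intermediate orbits remain in the domains where Theorem \ref{teo_lav_2d} provides the Lavaurs convergence.
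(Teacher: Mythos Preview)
Your proposal is correct and follows essentially the same route as the paper: reduce to points $p$ with $T_\alpha^m(p)\in J^1(F_0)$, use lower semicontinuity of $\eps\mapsto J^1(F_\eps)$ to find $q_\nu\in J^1(F_{\eps_\nu})$ converging to $T_\alpha^m(p)$, invoke the iterated convergence $F_{\eps_\nu}^{mn_\nu}\to T_\alpha^m$ together with the openness of $T_\alpha$ to produce preimages $p_\nu\to p$, and conclude by total invariance. The paper's proof is more terse on the open-mapping/preimage step---it simply asserts the existence of the $p_\nu$ from openness of $T_\alpha$ and the uniform convergence---whereas you spell out the degree/Hurwitz mechanism; your caution there is reasonable but not, in the end, where any real difficulty lies.
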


Finally, in the last section, we consider a family of \emph{regular polynomials}, i.e., polynomial endomorphisms
of $\C^2$ admitting an extension to $\P^2 (\C)$. For these maps, it is meaningful to define the \emph{filled
Julia set} $K$ as the set of points with bounded orbit. In analogy with the one-dimensional theory, we deduce from Theorem \ref{teo_lav_2d}
an estimate
for the discontinuity of the filled Julia set at $\eps=0$ (see Section \ref{section_altro} for
the definition of the set $K(F_0, T_\alpha)$) and in particular deduce that $\eps\mapsto K(F_\eps)$
is discontinuos at $\eps=0$.
Notice that, differently from the case of the large Julia set, this is not already a direct consequence
of the 1-dimensional theory.

\begin{teo}\label{teo_K_intro}
Let $F_\eps$ be a holomorphic family of regular polynomial maps of $\C^2$
as in \eqref{eq_family}
and $T_\alpha:\Uu\to\C^2$ be a Lavaurs map such that $F_{\eps_\nu}^{n_\nu}\to T_\alpha$
on $\Uu$ for some $\alpha$-sequence $(\eps_\nu, n_\nu)$.
Then
 \[
K (F_0, T_\alpha) \supset \limsup
K (F_{\eps_j}).
 \]
 Moreover,
 $\eps\mapsto K(F_\eps)$ is discontinuous at $\eps=0$.
\end{teo}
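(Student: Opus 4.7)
The plan is to deduce both statements from Theorem \ref{teo_lav_2d}. For the inclusion $K(F_0, T_\alpha) \supset \limsup K(F_{\eps_j})$, I would argue the contrapositive: if $p \notin K(F_0, T_\alpha)$, then by the definition (in analogy with the one-dimensional one recalled in the introduction) there exist an integer $m \geq 0$ and a sequence of intermediate $F_0$-iterates such that some combined image $T_\alpha^m(p)$ is defined and does not lie in $K(F_0)$. The task is then to show that $p \notin K(F_{\eps_j})$ for all $j$ large enough.

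The first step is to promote Theorem \ref{teo_lav_2d} to an iterated convergence of the form
\[
F_{\eps_j}^{m n_j + N}(p) \longrightarrow F_0^N(T_\alpha^m(p)) \quad \text{as } j \to \infty,
\]
for any fixed integers $m, N \geq 0$. This follows from the local uniform convergence $F_{\eps_j}^{n_j} \to T_\alpha$ combined with the continuous (indeed holomorphic) dependence of $F_\eps^N$ on $\eps$, applied successively, provided the intermediate images remain in regions where Theorem \ref{teo_lav_2d} keeps applying. Since $F_\eps$ is a holomorphic family of regular polynomial maps, there is a uniform escape radius $R$ for $\eps$ near $0$. Because $T_\alpha^m(p) \notin K(F_0)$, we may pick $N$ so large that $|F_0^N(T_\alpha^m(p))| > 2R$. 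The iterated convergence then forces $|F_{\eps_j}^{m n_j + N}(p)| > R$ for all $j$ large, hence $p \notin K(F_{\eps_j})$ for such $j$, and therefore $p \notin \limsup K(F_{\eps_j})$.

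For the discontinuity of $\eps \mapsto K(F_\eps)$ at $\eps = 0$, the plan is to exhibit a point $p \in K(F_0) \setminus K(F_0, T_\alpha)$ for a suitable Lavaurs map and then invoke the inclusion just proved. The invariant line $\{y = 0\}$ carries a one-dimensional parabolic family of the type studied by Lavaurs. Applying Theorem \ref{teo_lavaurs_disc_JK} on this line produces an $\alpha$-sequence and a point $z_0$ in the 1D filled Julia set whose iterated Lavaurs image escapes $K(f_0|_{\{y=0\}})$. By the invariance of $\{y=0\}$ and the semiconjugation \eqref{eq_teo_lav_2d_semiconj}, the 2D Lavaurs map $T_\alpha$ restricted to this line coincides with the 1D one, so $p := (z_0, 0)$ lies in $K(F_0)$ while $T_\alpha^m(p)$ is outside $K(F_0)$ for some $m$. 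Combined with the first part, this shows $p \notin \liminf K(F_{\eps_\nu})$, which breaks lower semicontinuity at $\eps = 0$.

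The main obstacle will be the iterated convergence step. Theorem \ref{teo_lav_2d} provides convergence only on neighborhoods of compact subsets of $\Bb \cap \{y = 0\}$, while $T_\alpha$ sends such compacts into the repelling basin $\Rr$; to make sense of the interleaved compositions $F_0^{k_1} \circ T_\alpha \circ \dots \circ F_0^{k_m} \circ T_\alpha$ and to keep the trajectories in a domain where Theorem \ref{teo_lav_2d} can be reapplied, one must combine the geometry of $\Bb$ and $\Rr$ near the characteristic direction $[1:0]$ with estimates coming from the Fatou coordinates $\t\fai$ and $\t\fao$. This is a genuinely two-dimensional difficulty: in dimension one, $\Bb \cup \Rr$ already covers a pointed neighborhood of the fixed point, whereas in 2D it does not, and orbit tracking between the two basins is correspondingly more delicate.
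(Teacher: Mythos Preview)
Your overall strategy matches the paper's: the inclusion is proved by the contrapositive using convergence of high iterates, and the discontinuity is read off the invariant line $\{y=0\}$ via the one-dimensional Lavaurs theory. The paper packages the first part through Douady's closedness criterion and the upper semicontinuity of $\eps\mapsto K(F_\eps)$ rather than an explicit escape-radius argument, but your variant is equivalent.

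The one point to correct is your ``main obstacle'' paragraph. In the paper's definition of $K(F_0,T_\alpha)$ (Section~\ref{section_altro}), the condition is simply that $T_\alpha^m(p)$ is \emph{defined} and lies outside $K(F_0)$; being defined means exactly that $p,\,T_\alpha(p),\,\dots,\,T_\alpha^{m-1}(p)$ all lie in $\Uu$. There are no interleaved $F_0$-iterates, and no need to shuttle between $\Bb$ and $\Rr$. Given the hypothesis $F_{\eps_\nu}^{n_\nu}\to T_\alpha$ locally uniformly on $\Uu$, the convergence $F_{\eps_\nu}^{m n_\nu}\to T_\alpha^m$ on a neighbourhood of $p$ is immediate by composition of locally uniform limits, since each intermediate image stays in $\Uu$ by assumption. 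So the ``genuinely two-dimensional difficulty'' you anticipate does not arise here; the argument is as short in dimension two as in dimension one once the definition is in hand.
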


The paper is organized as follows. In Section \ref{section_prelim}
we recall the results by Hakim
describing the local dynamics
of the map \eqref{eq_F0}
near the origin, and introduce the Fatou coordinates associated to the attracting and repelling basins.
In Section \ref{section_perturbed} we define and study suitable perturbations
of the Fatou coordinates, that allow to semiconjugate the iteration of $F_\eps$ to a translation by 1, up
to a controlled error.
In Sections \ref{section_fasi} and \ref{section_prelim_convergence}
we carefully study the orbits of points under iteration by $F_\eps$ and prove some preliminary convergence
result needed for the proof of Theorem \ref{teo_lav_2d}, which is given in Section \ref{section_convergence}.
In Section \ref{section_disc_J} and \ref{section_altro}
we deduce from Theorem \ref{teo_lav_2d} the estimates of the discontinuity of the large Julia set
and (for regular polynomials) of the filled Julia set 
at $\eps =0$.

\subsection*{Notation}

The symbol $O(x)$ will stand for some element in the ideal generated by $x$. More
generally, given any $f$, $O(f)$ will stand 
for
some element in the ideal generated by $f$. Analogously,
$O(f_1,\dots,f_k)$ will stand for some element in the ideal generated by $f_1, \dots, f_k$.
 
The notation $O_2 (x,y)$ will be a shortcut for $O(x^2, xy, y^2)$.
Given a point $p\in \C^2$, we shall denote its components as $x(p)$ and $y(p)$.

\subsection*{Acknowledgements}
It is a pleasure to thank my advisor Marco Abate for suggesting me this problem,
for his many helpful comments and the careful reading of this paper.
I would also like to thank Eric Bedford for helpful discussions about his paper.

\section{Preliminaries and Fatou coordinates}\label{section_prelim}

Following the work of Hakim \cite{hakim97} (see also \cite{hakim98,arizzi2014ecalle}),
we start giving a description of the local
dynamics
near the origin for $F_0$
by recalling some classical notions in this setting.
Let $\Phi$ be a germ of transformation tangent to the identity at the origin
of $\C^2$. We can locally write it near the origin as
\[
\Phi \pa{
\begin{array}{c}
 x\\
 y
\end{array}
}
=
\pa{
\begin{array}{c}
 x + P(x,y) + \dots\\
 y + Q(x,y) + \dots
\end{array}
},
\]
where $P$ and $Q$ are homogeneous polynomials of degree $2$.
In the following, we shall always assume that $P(x,y)$
is not identically zero.
A \emph{characteristic direction}
is a direction $V = [x:y]\in \P^1 (\C)$ such that the complex line through the origin in the direction $[x:y]$ is invariant
for $(P,Q)$. The direction is \emph{degenerate} if the restriction of $(P,Q)$ is zero on it, \emph{non degenerate} otherwise.

Consider now a non degenerate characteristic direction $V$ and take coordinates such that $V = [1:u_0]$.
Notice that the fact that $[1:u_0]$ is a characteristic direction is equivalent to $u_0$ being a zero of
$r(u) := Q(1,u) - u P(1,u)$.
The \emph{director} of the characteristic direction $[1:u_0]$ is thus defined as
\[
\frac{r'(u_0)}{P(1,u_0)}
\]
(see \cite[Definition 2.4]{abate2015fatou} for a more intrinsec -- and equivalent -- definition).
Given a germ $\Phi$ and a non degenerate characteristic direction $V$ for $\Phi$
we can assume, without loss of generality,
that $V=[1:0]$
and that the coefficient
of $x^2$ in $P(x,y)$
is 1 (notice that Hakim has the opposite normalization, i.e., with the term $-x^2$).
The following result by Hakim
(\cite[Proposition 2.6]{hakim97})
gives an explicit description of an invariant subdomain of $\Bb$.
In all this work, we will restrict ourselves to points belonging to such an invariant domain.

\begin{prop}[Hakim]\label{prop_hakim_26}
 Let $\Phi$ be a germ of transformation of $\C^2$ tangent to the identity (normalized as above), such that $V = [1:0]$
 is a nondegenerate characteristic direction with director $\delta$ whose real part is
 greater than some $0< \alpha\in \R$. Then,
if $\gamma, s$ and $R$ are small enough positive constants,
 every point of the set
 \[
\tilde C_0 (\gamma, R,s):=
  \set{(x,y)\in \C^2 \colon \abs{\Im x}\leq - \gamma  \Re x , \abs{x}\leq R, \abs{y}\leq s \abs{x}}
 \]
 is attracted to the origin in the direction $V$ and $x(\Phi^n (x,y)) \sim -\frac{1}{n}$.
 Moreover we have $\abs{x_n}\leq {\frac{2}{n}}$ and
 \begin{equation}\label{eq_xy_hakim}
 \abs{y(\Phi^n (x,y))} \abs{x(\Phi^n (x,y))}^{-\alpha-1} \leq \abs{y} \abs{x}^{-\alpha -1}.
 \end{equation}
\end{prop}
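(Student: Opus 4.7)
The plan is to reduce the dynamics to a tractable one-dimensional model via the standard parabolic change of coordinate $w:=-1/x$, which turns the $x$-iteration $x\mapsto x+x^2+\cdots$ into $w\mapsto w+1+\eta$ with $\eta=O(1/w)+O(y)$. Writing $\Phi$ in the form
\[
\Phi(x,y)=\bigl(x+x^2+x\cdot O_2(x,y),\ y(1+cx+dy+O_2(x,y))\bigr)
\]
(as forced by the normalization $P(1,0)=1$ and by $Q(1,0)=0$, the latter being a restatement of $[1:0]$ being a characteristic direction), the director is $\delta=c-1$ and the hypothesis becomes $\Re c>1+\alpha$. In the $(w,y)$ coordinates the set $\tilde C_0(\gamma,R,s)$ corresponds to a sector of vertex $\infty$ around the positive real axis (with opening controlled by $\gamma$), intersected with $\{|w|\geq 1/R\}$ and the slope condition $|y|\leq s|x|$.

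The first step is to prove forward invariance of $\tilde C_0$ together with the bound $|x_n|\leq 2/n$. From the iteration $w_{n+1}=w_n+1+\eta_n$ with $|\eta_n|\leq K(1/|w_n|+|y_n|)$, and using $|y_n|\leq s|x_n|$, one verifies by induction that $\Re w_{n+1}\geq \Re w_n+1/2$ and that $w_{n+1}$ remains in the chosen sector, provided $R$ and $s$ are small enough. This yields $\Re w_n\geq n/2+\Re w_0$, hence $|x_n|\leq 2/n$. Running the same estimate with a finer control of $\eta_n$ gives $w_n=n+o(n)$, i.e.\ $x_n\sim -1/n$.

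The core of the proof is the estimate \eqref{eq_xy_hakim}, which simultaneously secures the invariance of the slope condition. Direct computation yields
\[
\frac{y\circ\Phi}{y}=1+cx+O_2(x,y),\qquad \Bigl(\frac{x\circ\Phi}{x}\Bigr)^{\alpha+1}=1+(\alpha+1)x+O_2(x,y).
\]
In the sector $\{|\Im x|\leq -\gamma \Re x\}$, the leading-order inequality $|1+cx|\leq |1+x|^{\alpha+1}$ reduces to $\Re(cx)\leq (\alpha+1)\Re x$, which holds strictly thanks to $\Re c>1+\alpha$ as soon as the opening $\gamma$ is small enough (so that $|\Im c\cdot\Im x|$ is dominated by the margin $(\Re c-\alpha-1)|\Re x|$). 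Choosing $R,s$ further small enough to absorb the $O_2$ remainders, one obtains
\[
\Bigl|\frac{y\circ\Phi}{y}\Bigr|\leq \Bigl|\frac{x\circ\Phi}{x}\Bigr|^{\alpha+1}
\]
on $\tilde C_0$, which is exactly the inductive step for \eqref{eq_xy_hakim}. Combined with the decrease of $|x_n|$, this also shows that the slope $|y_n/x_n|$ decreases, proving invariance of $\{|y|\leq s|x|\}$.

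The main obstacle is the mutual coupling of the three defining estimates of $\tilde C_0$: the iteration of $x$ depends on $y$ through $\eta$, while the key cancellation in the iteration of $y$ requires $x$ small and in the right sector. I would handle this by fixing the parameters in the order $\gamma\to R\to s$, so that at each stage of the induction the estimate being proved relies only on bounds already guaranteed by parameters chosen earlier, ensuring that the induction closes without circularity.
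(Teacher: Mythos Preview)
The paper does not prove this proposition: it is quoted verbatim from Hakim \cite[Proposition 2.6]{hakim97} and used as a black box throughout. Your outline is essentially Hakim's original argument --- the inversion $w=-1/x$ turning the parabolic iteration into a near-translation, the inductive control $\Re w_{n+1}\geq \Re w_n+1/2$ giving $|x_n|\leq 2/n$, and the one-step multiplicative inequality $|y\circ\Phi/y|\leq |x\circ\Phi/x|^{\alpha+1}$ iterated to \eqref{eq_xy_hakim} --- so there is nothing to contrast.

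One point to tighten: your displayed normal form for $\Phi$ drops the quadratic cross-terms in the first coordinate. In general the first component is $x+x^2+axy+by^2+O_3$, not $x+x^2+x\cdot O_2(x,y)$; hence $x\circ\Phi/x = 1+x+ay+by^2/x+\dots$, and the expansion of $(x\circ\Phi/x)^{\alpha+1}$ carries an extra $(\alpha+1)ay$ contribution beyond the $(\alpha+1)x$ you wrote. Likewise the $dy$ term in $y\circ\Phi/y$ is linear, not $O_2$. On $\tilde C_0$ all of these are $O(s|x|)$ and are absorbed exactly by your final step (choosing $s$ small after $\gamma$), so the argument survives; but the two ``direct computation'' lines are not literally correct as stated.
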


Notice that, for a $\gamma_1$ slightly smaller
than $\gamma$, we have $F_0 (\t C_0 (\gamma, R,s)) \subseteq \t C_0 (\gamma, R,s)$.

Let us now consider $F_0$ as in \eqref{eq_F0}. It is immediate to see that $[1:0]$
is a non-degenerate characteristic direction, with director equal to $\rho -1$. This is the reason
we made the assumption that $\rho > 1$.
It will be even clearer later (Lemma \ref{lemma_serie}) that this a crucial assumption.

An important feature of our setting is that the (local) inverse of a map tangent to the identity
shares a lot of properties with the original map (this does not happen for instance in the semi-parabolic situation).
In fact, it is immediate to see
that the local inverse of an endomorphism tangent to the identity is still
tangent to the identity, with the same characteristic directions and moreover the same Hakim directors.
In our situation, $(0,0)$ is still a double fixed point for the local inverse $G_0$,
which has the following form (see for example the explicit description of the coefficients of the inverse of
an endomorphism tangent to the identity given in \cite{arformal}),
\[G_0
\left(
\begin{array}{c}
 x\\
 y
\end{array}\right)
=
\left(\begin{array}{c}
       x - x^2 (1+   (q-1)x + r y + O_2(x,y))\\
       y(1 - \rho x + O_2(x,y))
      \end{array}\right)
\]
and the stated properties are readily verified.

In the following, we will fix a neighbourhood $U$
of the origin where $F_0$ is invertible, and consider an invariant domain $\t C_0$ as in
Proposition \ref{prop_hakim_26} for $F_0$ such that
$-\t C_0$ satisfies the same property for $G_0$ and 
both $\t C_0$ and $-\t C_0$
are contained in $U$.

We now briefly recall how to construct
a (one dimensional)
Fatou coordinate $\t \fai$ on $\t C_0$
semiconjugating $F_0$ to a translation
by 1.
We notice here that it is actually possible to construct a two-dimensional
Fatou coordinate, on a subset of $\t C_0$, with values in $\C^2$ and semiconjugating the system to the translation
by $(1,0)$. Since we will not use it, we do not detail the construction here, but we refer the interested
reader to \cite{hakim97}.

The first step of the construction of $\t \fai$
is to consider the map
\begin{equation}\label{eq_def_w0}
\t w^\iota_0  (x,y ):=  -\frac{1}{x} - q \log (-x).
\end{equation}
Notice that, in the chart $\t w^\iota_0$, the map $F_0$
already looks like a translation by 1. Indeed, 
by \eqref{eq_F0}, we have
\begin{equation}\label{eq_conto_w0}
\begin{aligned}
w^{\iota}_0 (F_0 (x,y)) & = - \frac{1}{x(F_0 (x,y))} - q \log (- x(F_0 (x,y)))\\
& = -\frac{1}{x} - q \log (-x) + 1 + ry + O_2(x,y)\\
& = \t w^\iota_0 (x,y) + 1 + ry + O_2(x,y).
\end{aligned}
\end{equation}
In order to get an actual Fatou coordinate, we
consider the functions
\begin{equation}\label{eq_def_fai_0n}
\t \fai_{0,n}:= \t w^{\iota} (F_0^{n}(x,y)) - n.
\end{equation}
The following Lemma proves that the $\t \fai_{0,n}$'s
converge to an actual Fatou coordinate $\t \fai$
as $n\to \infty$.
\begin{lemma}\label{lemma_prima_coord}
The functions $\t \fai_{0,n}$ converge,
locally uniformly on $\t C_0$,
to an analytic function $\t \fai: \t C_0 \to \C$ satisfying
\[
\t \fai (F_0 (p)) = \t \fai (p) + 1.
\]
\end{lemma}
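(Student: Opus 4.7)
The natural strategy is to show that $(\t \fai_{0,n})$ is Cauchy uniformly on compact subsets of $\t C_0$, by estimating the telescoping differences. Evaluating \eqref{eq_conto_w0} at the point $(x_n,y_n) := F_0^n(p)$ yields
\begin{equation*}
\t \fai_{0,n+1}(p) - \t \fai_{0,n}(p) = \t w^\iota_0(F_0^{n+1}(p)) - \t w^\iota_0(F_0^n(p)) - 1 = r\, y_n + O_2(x_n,y_n),
\end{equation*}
so local uniform convergence reduces to the summability of $\sum_n \abs{y_n}$ and $\sum_n(\abs{x_n}^2 + \abs{x_n y_n} + \abs{y_n}^2)$ on compact subsets of $\t C_0$.

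These bounds follow directly from Proposition \ref{prop_hakim_26}, once I fix a real number $\alpha$ with $0 < \alpha < \rho - 1$ (possible because the director of $[1:0]$ equals $\rho - 1 > 0$ by the standing assumption $\rho > 1$). That proposition gives $\abs{x_n} \leq 2/n$ together with the Hakim decay \eqref{eq_xy_hakim}; for any $p=(x,y)$ in a compact $K \subset \t C_0$ this reads
\begin{equation*}
\abs{y_n} \leq \abs{y}\abs{x}^{-\alpha-1}\abs{x_n}^{\alpha+1} \leq C_K\, n^{-(\alpha+1)}.
\end{equation*}
Since $\alpha + 1 > 1$, both the linear and the quadratic tails are dominated on $K$ by convergent comparison series, so the $\t \fai_{0,n}$ converge locally uniformly on $\t C_0$ to a holomorphic function $\t \fai$.

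The semi-conjugacy relation is then obtained by letting $n \to \infty$ in the tautological identity $\t \fai_{0,n}(F_0(p)) = \t w^\iota_0(F_0^{n+1}(p)) - n = \t \fai_{0,n+1}(p) + 1$, which is legitimate for $p$ in the slightly smaller forward-invariant cone noted right after Proposition \ref{prop_hakim_26}, so that both $p$ and $F_0(p)$ lie in $\t C_0$. One mildly delicate point is the choice of a single-valued branch of $\log(-x)$ in the definition \eqref{eq_def_w0} of $\t w^\iota_0$: on $\t C_0$ one has $\Re x \leq 0$ with $\abs{\Im x} \leq -\gamma \Re x$, so $-x$ ranges in a sector strictly contained in the right half-plane, and the principal branch makes $\t w^\iota_0$ holomorphic on $\t C_0$. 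The main --- and essentially only --- obstacle is therefore to secure the decay $\abs{y_n} \lesssim n^{-(\alpha+1)}$ with $\alpha > 0$, which is precisely where the hypothesis $\rho > 1$ enters through Hakim's estimate.
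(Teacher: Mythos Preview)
Your argument is correct and is essentially identical to the paper's proof: both telescope $\t\fai_{0,n+1}-\t\fai_{0,n}=r y_n+O_2(x_n,y_n)$ via \eqref{eq_conto_w0} and then invoke the Hakim bounds $\abs{x_n}\le 2/n$, $\abs{y_n}\lesssim n^{-(\alpha+1)}$ from Proposition~\ref{prop_hakim_26} to obtain normal convergence, deriving the functional relation in the limit. Your remarks on the choice of $\alpha<\rho-1$ and on the branch of $\log(-x)$ are welcome clarifications but do not change the strategy.
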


\begin{proof}
Set $A_0 (x,y) :=
\t w_0^\iota (F_0(x,y))-
\t w_0^\iota (x,y)
-1
=
\t \fai_{0,1} (x,y)- \t \fai_{0,0} (x,y)$ and
notice that
$A_0 (F^n_0 (x,y))=
\t \fai_{0,n+1} (x,y)- \t \fai_{0,n} (x,y) $.
In order to ensure the convergence of the $\t \fai_{0,n}$'s
we can prove that the series of the $A_0 (F^n_0 (x,y))$'s
converges normally on $\t C_0$.
It follows from \eqref{eq_conto_w0} that 
\[
A_0 (F^n_0 (x,y))
 =  r y (F^n_0 (x,y)) + O_2(x(F^n_0 (x,y)),y (F^n_0 (x,y))).
\]
By Proposition \ref{prop_hakim_26}, we have $\abs{x(F^n_0 (x,y))}\leq 2/n$ and $\abs{y(F^n_0 (x,y))}\leq 1/n^{\alpha+1}$, for
some $\alpha >0$. This implies that the series $\sum_{n=0}^\infty \abs{ A_0 (F^n (x,y))}$ converges
normally
to
\[
\t \fai (x,y):= \t \fai_0 (x,y) + \sum_{n=0}^{\infty} A_0 (F^n_0 (x,y)).
\]
The functional relation is also easily verified, since $\abs{A_0 (F^n(x,y))}\to 0$.
\end{proof}

In the repelling basin the situation is completely analogous.
  Setting $\t w^o_0 := -\frac{1}{x} - q \log(x)$
on $-\t C_0$
and $\t \fao_{0,n} := \t w^o_{0} (F^{-n}_0 (x,y)) + n$, we have
$\t \fao_{0,n}\to \t \fao$ locally uniformly on $-\t C_0$, where $\t \fao: \t C_0\to \C$ satisfies
the functional relation
$\t \fao \circ F_0 (p)= \t \fao (p)+1$.

We notice that the Fatou coordinates are not unique. For instance,
we can add any constant to them and still have a coordinate satisfying the desired functional relation.
In the following (and in Theorem \ref{teo_lav_2d}), we shall use as coordinate the one
obtained in Lemma \ref{lemma_prima_coord}
above.

\section{The perturbed Fatou coordinates}\label{section_perturbed}

We consider now the perturbation \eqref{eq_family} of the system
$F_0$. The goal of this section is
modify the Fatou coordinate $\t \fai$ built
in Section \ref{section_prelim} to an approximate coordinate
for $F_\eps$. More precisely, we are going to construct
some
coordinates $\t \fai_\eps$
(with values in $\C$)
that, on suitable subsets of $\t C_0$:
\begin{enumerate}
 \item almost conjugate $F_\eps$ to a translation by
$1$,
in the sense that the error that
we have in considering $F_\eps$
 as a translation in this new chart will be bounded and explicitly estimated; and
\item tend to the one-dimensional Fatou coordinates $\t \fai$ for $F_0$ as $\eps \to 0$.
\end{enumerate}

We shall only be concerned with
$\eps$ small and
satisfying
\begin{equation}\label{eq_condizioni_eps}
\begin{cases}
\Re \eps >0\\
\abs{\Im \eps} < c\abs{\eps^2}.
\end{cases}
\end{equation}
Notice that this
means that $\eps$
is contained in the region, in a neighbouhood of the origin, of the points with positive real part and
bounded by two circles
of the same radius
 centered on the
 imaginary axis
and tangent one to the other at the origin.
Notice in particular that, by definition, every sequence $\eps_\nu$ associated to an $\alpha$ sequence
$\pa{\eps_\nu,n_\nu}$ (see Definition \ref{defi_alpha_sequence})
satisfies the above property.

First of all, we
fix
a small neighbourhood $U$
of the origin, such that $F_\eps$ is invertible in $U$, for $\eps$
sufficiently small.
In this section, we shall only be concerned with this local situation.
Then, fix sufficiently small $\gamma<\gamma', R$ and $s$
such that Proposition \ref{prop_hakim_26} holds on $\t C_0 (\gamma, R, s)$ and 
$\t C_0 (\gamma', R, s)$
for both $F_0$ and $H_0 := - F^{-1}_0$.
By taking $\gamma$
and $\gamma'$
sufficiently close, we can assume that
$F_0 (\t C_0 (\gamma',R,s) )$ and $H_0 (\t C_0 (\gamma',R,s) )$
are contained in $\t C_0$.
Denote by $\t C_0,\t C'_0  \subset U$
(dropping for simplicity the dependence on the parameters)
these
sets
and by $C_0,C'_0$ their projections on the $x$-plane. We shall assume that $R \rho \ll 1$, and so
that
$\t C_0 \subset \t C'_0 \Subset U$.

We consider the classical 1-variable change of coordinates on $x$ (and depending on $\eps$)
given by
\begin{equation}\label{eq_def_u_eps}
u_\eps (x) = \frac{1}{\eps} \arctan \pa{\frac{x}{\eps}}= \frac{1}{2i\eps} \log \pa{ \frac{i\eps-x}{i\eps +x} }.
\end{equation}

The geometric idea behind this map is the following: for $\eps$
small as in \eqref{eq_condizioni_eps},
$u_\eps$ sends
$i\eps$ to the ``infinity above" and $-i\eps$ to the ``infinity below". Circular arcs connecting
these two points are sent to parallel (and almost vertical) lines. In particular, the image
of the map $u_\eps$
is contained in the strip $\left\{-\frac{\pi}{2\abs{\eps}} < \Re \pa{\frac{\eps}{\abs{\eps}}w} < \frac{\pi}{2\abs{\eps}}\right\}$
and the image of the disc of radius
$\eps$ centered at the origin is the
strip $\left\{-\frac{\pi}{4\abs{\eps}} < \Re \pa{\frac{\eps}{\abs{\eps}}w} < \frac{\pi}{4\abs{\eps}}\right\}$.
Notice that the inverse of this function on
$\left\{-\frac{\pi}{2\abs{\eps}} < \Re \pa{\frac{\eps}{\abs{\eps}}w} < \frac{\pi}{2\abs{\eps}}\right\}$
is given by $w \mapsto \eps \tan \pa{\eps w}$.
We gather in the next Lemma the main properties of $u_\eps$
that we shall need in the sequel.

\begin{lemma}\label{lemma_pr_u_eps} Let $u_\eps$
be
given by
\eqref{eq_def_u_eps}. Then the following hold.
\begin{enumerate}
 \item\label{lemma_pr_u_eps_1} For every compact subset $\Cc \subset  C_0$
 there exist two positive constants $M^- (\Cc)$ and $M^+ (\Cc)$
 such that, for every $x\in \Cc$, we have
 \begin{equation}\label{eq_MM}
-\frac{\pi}{2\abs{\eps}} + M^- < \Re \pa{
 \frac{\eps}{\abs{\eps}}u_\eps (x)
}
<
- \frac{\pi}{2\abs{\eps}} + M^+
\end{equation}
for every $\eps$
sufficiently small.
 \item\label{item3_lemma_pr_u_eps} If $-\frac{\pi}{2\abs{\eps}}< \Re \pa{ \frac{\eps}{\abs{\eps}} u_\eps (x)} < -\frac{\pi}{4\abs{\eps}}$,
 then
 $\abs{x} \leq \frac{1}{\frac{\pi}{2\abs{\eps}} + \Re (\frac{\eps}{\abs{\eps}}u_\eps (x))}$. 
 \end{enumerate}
\end{lemma}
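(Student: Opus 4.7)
For part (1), I would rewrite the real part in a tractable form using the logarithmic expression of $u_\eps$. Starting from $u_\eps(x) = \frac{1}{2i\eps}\log\frac{i\eps-x}{i\eps+x}$, multiplying by $\eps/|\eps|$ and taking real parts yields
\[
\Re\Bigl(\tfrac{\eps}{|\eps|}u_\eps(x)\Bigr) = \frac{1}{2|\eps|}\arg\Bigl(\frac{i\eps-x}{i\eps+x}\Bigr),
\]
where $\arg$ denotes the principal branch. For $x$ in the cone $C_0$ (hence $\Re x \leq 0$) and $\eps$ satisfying \eqref{eq_condizioni_eps} (hence asymptotically on the positive real axis), the ratio converges to $-1$, and a direct check of its imaginary part shows it approaches $-1$ from the lower half-plane, so that the principal argument tends to $-\pi$ from above. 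Factoring $(i\eps-x)/(i\eps+x) = -(1-i\eps/x)/(1+i\eps/x)$ and expanding $\arg(1+w) = \Im w + O(|w|^2)$ for small $w$, one obtains
\[
\arg\Bigl(\frac{i\eps-x}{i\eps+x}\Bigr) = -\pi - 2\Re(\eps/x) + O(|\eps|^2/|x|^2).
\]
Dividing by $2|\eps|$, the correction $-\Re(\eps/x)/|\eps|$ is both bounded and bounded away from zero uniformly for $x$ in the compact subset $\Cc$ (where $|x|$ is pinched between two positive constants) and for $\eps$ sufficiently small, which produces the desired constants $M^\pm(\Cc)$.

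For part (2), I would invert the map. Setting $\tilde w := \tfrac{\eps}{|\eps|}u_\eps(x) = \alpha + i\beta$ and $\phi := \pi/2 + |\eps|\alpha$, the hypothesis yields $\phi \in (0,\pi/4)$ and $|\eps|\tilde w = -\pi/2 + \phi + i|\eps|\beta$. Since $x = \eps\tan(|\eps|\tilde w) = -\eps\cot(\phi + i|\eps|\beta)$, the standard identities $|\cos(a+ib)|^2 = \cos^2 a + \sinh^2 b$ and $|\sin(a+ib)|^2 = \sin^2 a + \sinh^2 b$ give
\[
|x|^2 = |\eps|^2\,\frac{\cos^2\phi + \sinh^2(|\eps|\beta)}{\sin^2\phi + \sinh^2(|\eps|\beta)}.
\]
The target bound $|x| \leq 1/(\pi/(2|\eps|) + \alpha) = |\eps|/\phi$ then reduces to
\[
\phi^2\cos^2\phi - \sin^2\phi \leq (1-\phi^2)\sinh^2(|\eps|\beta),
\]
which is immediate: the right-hand side is nonnegative (as $\phi < 1$), and the left-hand side is nonpositive because $\phi\cos\phi \leq \sin\phi$ on $(0,\pi/2)$, i.e., the classical inequality $\phi \leq \tan\phi$.

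The only genuinely delicate point is the branch analysis in (1): one must verify that for all $x \in \Cc$ and all admissible $\eps$ with $|\eps|$ small, the ratio $(i\eps-x)/(i\eps+x)$ stays in the lower half-plane, so that its principal argument converges to $-\pi$ rather than $+\pi$. This is what pins down the leading $-\pi/(2|\eps|)$ term in the estimate. Once this sign issue is settled, both items follow from routine asymptotic analysis and the elementary trigonometric inequality $\phi \leq \tan\phi$.
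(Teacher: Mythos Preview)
Your proof is correct and follows essentially the same approach as the paper. For part (1), the paper simply observes that $u_\eps(x)+\tfrac{\pi}{2\eps}\to -\tfrac{1}{x}$ uniformly on $\Cc$, which is exactly your expansion $-\tfrac{\pi}{2|\eps|}-\Re(\eps/x)/|\eps|+O(|\eps|)$ written in invariant form; for part (2), the paper inverts via $w\mapsto\eps\tan(\eps w)$ and uses the inequality $\tan|\Re w|<\tfrac{1}{\pi/2-|\Re w|}$, which after the substitution $\phi=\pi/2-|\Re w|$ is precisely your $\phi\leq\tan\phi$.
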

\begin{proof}
 For the first assertion the main point is to notice that, by the compactness of $\Cc$, we have
 \[
u_\eps (x) + \frac{\pi}{2\eps} \to -\frac{1}{ x} 
 \]
 uniformly on $\Cc$, as $\eps \to 0$.
From this we deduce the existence of constants $M^-, M^+$
such that \eqref{eq_MM} holds for every $x\in \Cc$.

For the
second one,
we exploit
the
inverse of $u_\eps$ on $\{-\frac{\pi}{2\abs{\eps}} < \Re \pa{\frac{\eps}{\abs{\eps}}w} < \frac{\pi}{2\abs{\eps}}\}$, which
is given by $w\mapsto \eps \tan (\eps w)$.
We have
\[
\frac{\pi}{4}< \abs{\Re w}<\frac{\pi}{2}
\Rightarrow \abs{\tan w} \leq \tan \abs{\Re w} <\frac{1}{\frac{\pi}{2}- \abs{\Re w}}
\]
and
the
assertion follows putting $w=\eps u_\eps (x)$.
\end{proof}

We define now, by means of the functions $u_\eps$, different regions
in the dynamical plane. In order to do this, we have to define some constants (independent on $\eps$)
that we shall repeatedly
use in the sequel.

First of all, fix some
$1< \rho' < \rho$.
Then, fix some $1<\rho''<5/4$ such that
\[
\abs{\frac{ 4\pi (\rho''-1)}{\tan \pa{ 4
\pi (\rho''-1)}}}>\frac{1}{\rho'}.
\]
This is possible since $\rho' >1$. In particular, $\rho''$ may be very close to 1.
Finally, set
\begin{equation}\label{eq_def_Ktau}
K:= 2\pi (\rho''-1) \mbox{ and } \tau:= \abs{\tan\pa{-\frac{\pi}{2} + \frac{K}{2}}}.
\end{equation}
Without loss of generality, we can take $\rho''$ small enough to ensure
that $K\leq \pi/4$.
Moreover, we shall assume that $\gamma'$ and $s$ are small enough such that
\begin{equation}\label{eq_assumpt_gamma_s}
\begin{cases}
\rho'< \rho \frac{1-\gamma'}{\sqrt{1+\gamma'^2}},\\
4 \tau s <1.
\end{cases}
\end{equation}

Denote by $D_\eps$ the subset of $\C$
given by
\begin{equation}\label{eq_zona_de}
x\in D_\eps \Leftrightarrow
-\frac{\pi}{2\abs{\eps}} + \frac{K}{\abs{\eps}}
< \Re \pa{ \frac{\eps}{\abs{\eps}}u_\eps (x)}<
\frac{\pi}{2\abs{\eps}} - \frac{K}{2\abs{\eps}}.
\end{equation}
Notice the asymmetry in the definition of $D_\eps$. This will be explained
in Lemma \ref{lemma_stima_verticale_globale}.

Let us now move to $\C^2$.
Let $\t D_\eps$
be the product $D_\eps \times  \D_{ 2 e^{4\pi \rho \tau} \abs{\eps}} \subset \C^2$
(the constant $e^{4\pi \rho \tau}$ will be explained in Proposition \ref{prop_stima_y_centro}).
By definition, since $K\leq\pi/4$, we have
\begin{equation}\label{eq_inclusioni_Deps}
\D_{\aeps} \times  \D_{2 e^{4\pi \rho \tau} \abs{\epsilon}}
\subset
\t D_\eps
\subset
 \D_{\tau \aeps} \times \D_{2 e^{4\pi \rho \tau}  \aeps}.
 \end{equation}
Notice in particular
that the ratios $\tau$ and $2 e^{4\pi \rho \tau}$ are
independent of $\eps$.

Set $C_\eps := \frac{\eps}{\aeps}C_0 \setminus D_\eps$ and
$\t C_\eps := \pa{\frac{\eps}{\aeps}, 1}\cdot \t C_0 \setminus \t D_\eps$
the rotations of $C_0$
and $\t C_0$
of $\frac{\eps}{\aeps}$ around the $y$ plane.
Notice that $\t C_\eps \to \t C_0$ and $\t C_\eps \cup \t D_\eps \to \t C_0$
as $\eps \to 0$. Morevover, we have
$\t C_\eps \subset \t C'_0$
for $\eps$ sufficiently small (and satisfying \eqref{eq_condizioni_eps})
The following Lemma
will be very useful in the sequel.

 \begin{lemma}\label{lemma_invarianza_inizio}
 For $\eps$ sufficiently small,
we have
$F_\eps (\t C_\eps) \subset \t C_\eps \cup \t D_\eps$.
\end{lemma}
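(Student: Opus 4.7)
Write $F_\eps(x,y) = (x', y')$, with $x' = x + (x^2+\eps^2)\alpha_\eps(x,y)$ and $y' = y(1 + \rho x + \beta_\eps(x,y))$. I will analyze the two coordinates separately, showing first that $x' \in (\frac{\eps}{\aeps}C_0) \cup D_\eps$ and then verifying the appropriate bound on $|y'|$ depending on which of the two sets $x'$ falls in.

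The key to the horizontal analysis is that $u_\eps$ essentially linearizes the first component of $F_\eps$ to a translation by $1$. Using $u_\eps'(x) = (\eps^2+x^2)^{-1}$, a Taylor expansion at $x$ gives $u_\eps(x') = u_\eps(x) + \alpha_\eps(x,y) + E_\eps(x,y)$ with $E_\eps = O(|x|+|y|+|\eps|^2)$ uniformly on $\t C_\eps$. Multiplying by $\eps/\aeps = 1 + O(|\eps|)$ (by \eqref{eq_condizioni_eps}) and taking the real part, $\Re((\eps/\aeps) u_\eps(x'))$ exceeds $\Re((\eps/\aeps) u_\eps(x))$ by $1 + o(1)$ as $\eps \to 0$. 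Since $p \in \t C_\eps$ forces $\Re((\eps/\aeps) u_\eps(x)) \leq -\pi/(2\aeps) + K/\aeps$, whereas the right boundary of $D_\eps$ in \eqref{eq_zona_de} is $\pi/(2\aeps) - K/(2\aeps)$, the image cannot cross $D_\eps$ in a single step for small $\eps$ (the gap is of order $1/\aeps$ while the shift is $O(1)$), so $x'$ either stays to the ``left'' of $D_\eps$ or enters $D_\eps$. In the first situation we still need the rotated cone condition $(\aeps/\eps) x' \in C_0$; this follows from the imaginary-part analog of the expansion together with the invariance $F_0(\t C_0(\gamma',R,s)) \subseteq \t C_0$ noted after Proposition \ref{prop_hakim_26}, the buffer between the openings $\gamma$ and $\gamma'$ absorbing the $O(|\eps|^2)$ error.

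For the vertical coordinate, I rely on the ratio identity
\[
\frac{|y'|}{|x'|} = \frac{|y|}{|x|}\cdot\frac{|1+\rho x+\beta_\eps(x,y)|}{|1+(x+\eps^2/x)\alpha_\eps(x,y)|},
\]
valid as long as $x \neq 0$. On $\t C_\eps$ one has $x \notin D_\eps$, so $|x|$ is bounded below in terms of $\aeps$, keeping $|\eps^2/x|$ small. Combined with $\hat x = (\aeps/\eps)x \in C_0$ (so $\Re\hat x < 0$ and $|\Im\hat x| \leq \gamma|\Re\hat x|$) and the gap $\rho' < \rho(1-\gamma')/\sqrt{1+\gamma'^2}$ from \eqref{eq_assumpt_gamma_s}, a direct computation bounds the second factor by $1$, so $|y'|/|x'| \leq |y|/|x| \leq s$. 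If $x' \notin D_\eps$ this is exactly $|y'| \leq s|x'|$, giving $(x',y') \in \t C_\eps$. If $x' \in D_\eps$, then by \eqref{eq_inclusioni_Deps} we have $|x'| \leq \tau\aeps$, so $|y'| \leq s\tau\aeps < \aeps/4 < 2e^{4\pi\rho\tau}\aeps$ using $4\tau s < 1$, and $(x',y') \in \t D_\eps$.

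The main obstacle is the control of the imaginary part in the horizontal Taylor expansion, which must remain smaller than the cone's opening $\gamma$ uniformly on $\t C_\eps$ as $\eps \to 0$. The strictly larger opening $\gamma'$, under which $F_0$-invariance holds on $\t C_0(\gamma',R,s)$, provides precisely the safety margin needed to absorb the $O(|\eps|^2)$ perturbation introduced by the family.
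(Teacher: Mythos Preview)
Your approach is considerably more elaborate than the paper's. The paper gives a three-line soft argument: since $\t C_\eps \subset \t C'_0$ for small $\eps$ and $F_0(\t C'_0) \subset \t C_0$ by the setup at the start of this section, and since $F_\eps = F_0 + O(\eps^2)$ uniformly on the closure of $\t C'_0$, the image $F_\eps(\t C_\eps)$ lies in an $O(\eps^2)$-neighbourhood of $\t C_0$; the first inclusion in \eqref{eq_inclusioni_Deps} (a polydisc of radius $\aeps \gg \aeps^2$ sits inside $\t D_\eps$) then takes care of the region near the vertex, where the cones themselves have no margin. No coordinate-by-coordinate analysis and no use of $u_\eps$ is needed at this stage; those tools enter only from Lemma~\ref{lemma_trasl} onward.

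Your computational route is in principle workable, and the horizontal argument that a single step cannot jump across $D_\eps$ is correct. However, the vertical step has a gap: the factor
\[
\frac{|1+\rho x+\beta_\eps(x,y)|}{|1+(x+\eps^2/x)\alpha_\eps(x,y)|}
\]
is not bounded by $1$ on all of $\t C_\eps$. When $|x|$ is comparable to $\aeps$ the correction $\eps^2/x$ has modulus up to $\aeps$, of the same order as $x$ itself, and since $\Re(\eps^2/x)<0$ it \emph{decreases} the denominator. The gain coming from $\rho>1$ is only of order $(\rho-1)|x|$, so the desired inequality needs roughly $(\rho-1)|x| \gtrsim \aeps^2/|x|$, i.e.\ $|x|\gtrsim \aeps/\sqrt{\rho-1}$; for $\rho$ close to $1$ this fails near the inner edge of $\t C_\eps$, and hypothesis \eqref{eq_assumpt_gamma_s} only guarantees $\rho>1$, not $\rho>2$. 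One can patch this by treating the boundary layer $|x|=O(\aeps)$ separately, but the paper's perturbative argument sidesteps the issue entirely and is the cleaner path for this lemma.
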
 

\begin{proof}
By the choice of $\t C_0$ and $\t C'_0$, we have $F_0 (\t C'_0) \subset \t C_0$.
Moreover, $F_\eps = F_0 + O(\eps^2) $  and $F_\eps$ uniformly converges
to $F_0$ on compact subsets of $\t C'_0$.
The assertion then follows from the 
the first inclusion in \eqref{eq_inclusioni_Deps}.
\end{proof}

The first step in the construction of the almost Fatou coordinates
consists in
considering
the functions $\t u_\eps$ given by
\[\t u_\eps (x,y) := u_\eps (x).\]
The following lemma gives the fundamental estimate on $\t u_\eps$: in this chart, the map $F_\eps (x,y)$
approximately acts as a translation by 1 on the first coordinate. Here and in the following, it will be useful to consider
the expression
\[
\gamma_\eps (x,y) := \frac{\alpha_\eps (x,y)}{1+ x \alpha_\eps (x,y)}.
\]
It is immediate to see that $\gamma_\eps (x,y) = 1 + qx + ry + O_2 (x,y) + O(\eps^2)$.

\begin{lemma}\label{lemma_trasl}
Take
 $p = (x,y) \in \t C_\eps \cup \t D_\eps$. Then
 \[
 \t u_\eps (F_\eps (p)) - \t u_\eps (p) = 1 + qx + ry + O_2(x,y) + O(\eps^2).
 \]
 In particular, when
$\gamma, R, s $ and $\eps(\gamma, R, s)$
are small enough,
 for $p = (x,y) \in \t C_\eps \cup \t D_\eps$
 we have
 \[
\abs{  \t u_\eps (F_\eps (p)) - \t u_\eps (p) -1}< \rho'' -1
\mbox{ and }
\abs{  \frac{\eps}{\aeps}  \pa{ \t u_\eps (F_\eps (p)) - \t u_\eps (p) }-1}< \rho'' -1.
 \]
\end{lemma}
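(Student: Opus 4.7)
The plan is to work directly with the logarithmic form $u_\eps(x) = \frac{1}{2i\eps}\log\frac{i\eps-x}{i\eps+x}$ and compute the increment explicitly. Setting $h := (x^2+\eps^2)\alpha_\eps(x,y)$, so that the first coordinate of $F_\eps(p)$ is $x+h$, one gets
\[
u_\eps(x+h) - u_\eps(x) = \frac{1}{2i\eps}\log\frac{(i\eps - x - h)(i\eps + x)}{(i\eps + x + h)(i\eps - x)}.
\]
Expanding the two products I would recognize the numerator as $-A-iB$ and the denominator as $-A+iB$, with
\[
A = \eps^2 + x^2 + xh = (\eps^2+x^2)(1+x\alpha_\eps(x,y)), \qquad B = \eps h = \eps(x^2+\eps^2)\alpha_\eps(x,y).
\]
In particular $B/(\eps A) = \alpha_\eps/(1+x\alpha_\eps) = \gamma_\eps$ and $B/A = \eps\gamma_\eps$.

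The second step is to apply $\log\frac{1+z}{1-z} = 2(z + z^3/3 + z^5/5 + \cdots)$ at $z = iB/A$. Since on $\tilde C_\eps \cup \tilde D_\eps$ the quantity $\gamma_\eps(x,y)$ is close to $1$, we have $|B/A| = |\eps\gamma_\eps| = O(\aeps)$, so the Taylor series is justified and
\[
u_\eps(x+h) - u_\eps(x) = \frac{B}{\eps A}\bigl(1 - \tfrac{1}{3}(B/A)^2 + \cdots\bigr) = \gamma_\eps(x,y)\bigl(1+O(\eps^2)\bigr).
\]
Substituting the expansion $\gamma_\eps(x,y) = 1 + qx + ry + O_2(x,y) + O(\eps^2)$ (which is immediate from the Neumann-type expansion of $\alpha_\eps/(1+x\alpha_\eps)$), the first displayed identity of the lemma follows at once.

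For the two estimates, I would use the explicit inclusions \eqref{eq_inclusioni_Deps} and the definition of $\tilde C_\eps$ as a rotation of $\tilde C_0(\gamma,R,s)$ to bound $|x|$, $|y|$ and $|\eps|$ by constant multiples of $R$, $sR$ and $\aeps$ respectively on $\tilde C_\eps \cup \tilde D_\eps$. Then the remainder $qx + ry + O_2(x,y) + O(\eps^2)$ can be made arbitrarily small --- in particular smaller than $\rho''-1$ --- by shrinking $\gamma, R, s$ and $\aeps(\gamma,R,s)$. For the second inequality, one additionally needs to control the factor $\eps/\aeps$: condition \eqref{eq_condizioni_eps} forces $|\Im\eps| \leq c\aeps^2$, so $\eps/\aeps = 1 + O(\aeps)$, and multiplying the already-small remainder by such a factor still leaves a bound below $\rho''-1$.

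The only genuinely delicate point is verifying $|B/A| < 1$ to make the Taylor expansion rigorous. This requires $A \neq 0$, i.e.\ that the singularities $x = \pm i\eps$ of $u_\eps$ are excluded. But these are precisely the points where $u_\eps$ diverges, and so they are automatically not in $\tilde C_\eps \cup \tilde D_\eps$, where $u_\eps$ is defined. The factor $1+x\alpha_\eps$ is then close to $1$ for $|x|$ small, and the bound $|B/A| = |\eps|\cdot|\gamma_\eps| = O(\aeps)$ is uniform, which in turn guarantees that the expansion remainder is truly $O(\eps^2)$ rather than something larger. Everything else is a direct computation.
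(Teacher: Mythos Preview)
Your argument is correct and is essentially the same as the paper's: both factor the ratio inside the logarithm to obtain $\frac{1+i\eps\gamma_\eps}{1-i\eps\gamma_\eps}$ (your $B/A$ equals $\eps\gamma_\eps$), then apply the Taylor expansion of $\log\frac{1+z}{1-z}$ and the expansion of $\gamma_\eps$. You additionally spell out the justification of the two final inequalities, which the paper leaves implicit.
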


\begin{proof}
Since
$x(F_\eps (x,y)) = x + (x^2 + \eps^2)\alpha_\eps (x,y)$, it follows that
\[
\frac{i\eps - x (F_\eps(x,y))}{i\eps + x(F_\eps (x,y))} =
\frac{
(i\eps - x)\pa{
1+ (x+i\eps) \alpha_\eps (x,y)
}
}{
(i\eps + x)\pa{
1+ (x-i\eps) \alpha_\eps (x,y)
}
}
\]
and so
\[
\frac{i\eps + x}{i\eps -x}
\frac{i\eps - x (F_\eps(x,y))}{i\eps + x(F_\eps (x,y))} 
=
\frac{1+i \eps \gamma_\eps (x,y)}{1-i\eps \gamma_\eps (x,y)}.
\]
The desired difference is then equal to
\[
\begin{aligned}
\t u_\eps (F_\eps (p)) - \t u_\eps (p) & =
\frac{1}{2i\eps} \log \frac{1+i \eps \gamma_\eps (x,y)}{1-i\eps \gamma_\eps (x,y)}\\
& =
\frac{1}{i\eps} \bra{ i\eps \gamma_\eps (x,y) + \frac{1}{3} \pa{i\eps \gamma_\eps (x,y)}^3 + O (\eps^4)}\\
& =
\gamma_\eps (x,y) + O(\eps^2)\\
&=
1+ qx + ry + O_2 (x,y) + O(\eps^2)
\end{aligned}
\]
and the assertion is proved.
\end{proof}

The next step is
to slightly modify our coordinate $\t u_\eps$ to a coordinate $\t w_\eps^\iota$ satisfying the
following two properties:
\begin{enumerate}
\item $\t w^\iota_\eps
\to \t w_0^\iota$ (with $\t w_0^\iota$ as in \eqref{eq_def_w0}) as $\eps\to 0$, and
\item $\t w^\iota_\eps (F^n_\eps (p)) - n \to \tilde \fai$ when $\eps\to 0$ and $n\to \infty$
satisfying some relation to be determined later.
\end{enumerate}
We also look for functions $\t w_\eps^o$ satisfying analogous properties on $-\t C_0$.
Recall that
the functions $\t w_0^\iota (x,y)$
and 
$\t w_0^o (x,y)$
almost semiconjugates the (first coordinate of the) system $F_0$
to a translation by 1
(by \eqref{eq_conto_w0}).

We set
\[
\tilde w_\eps (x,y) := \t u_\eps (x,y) - \frac{q}{2}\log(\eps^2 + x^2)
= \frac{1}{2i\eps} \log \pa{ \frac{i\eps-x}{i\eps +x} } - \frac{q}{2}\log(\eps^2 + x^2).
\]
and consider
their incoming
and outgoing
normalizations $\t w^\iota_\eps$ and $\t w_\eps^o$
given by
\[
\begin{aligned}
\tilde w^{\iota}_\eps (x,y) & := \frac{1}{2i\eps} \log \pa{ \frac{i\eps-x}{i\eps +x} } - \frac{q}{2}\log(\eps^2 + x^2)
+ \frac{\pi}{2\eps},\\
\tilde w^{o}_\eps (x,y) & := \frac{1}{2i\eps} \log \pa{ \frac{i\eps-x}{i\eps +x} } - \frac{q}{2}\log(\eps^2 + x^2)
- \frac{\pi}{2\eps}.
\end{aligned}
\]
It is immediate to check that the first request is satisfied, i.e., that
$\tilde w_\eps^\iota (x,y)\to \t w_0^\iota $ on $\t C_0$
(and $\tilde w_\eps^o (x,y)\to \t w_0^o $ on $-\t C_0$)
as $\eps\to 0$.
In the next proposition we
estimate the distance between the reading of $F_\eps$ in this new chart $\t w_\eps$ and the translation by 1.
We want to prove, in particular,
that now the error has no linear terms in the $x$ variable.
Indeed, notice that also for the system $F_0$
we had to remove this term (see Lemma \ref{lemma_prima_coord})
to ensure the convergence
of the series of the $A_0 (F^n_0 (p))$'s,
by the harmonic behaviour of $x(F_0^n (p))$.
For convenience of notation, we denote this error
by
\[
A_\eps (x,y):= \tilde w_\eps (F_\eps (x,y) ) - \tilde{w} (x,y) -1
\]
We then have the following estimate.
\begin{prop}\label{prop_error_A}
 $A_{\eps} (x,y) = ry + O_2 (x,y) + O(\eps^2)$.
\end{prop}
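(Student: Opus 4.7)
The plan is to exploit Lemma \ref{lemma_trasl}, which already gives the expansion of the translation error in the $\tilde u_\eps$ chart, and then show that the extra logarithmic correction $-\tfrac{q}{2}\log(\eps^2 + x^2)$ introduced in passing from $\tilde u_\eps$ to $\tilde w_\eps$ is precisely what kills the offending linear term $qx$ in $x$.

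Concretely, I would write
\[
A_\eps(x,y)
= \bigl[\tilde u_\eps(F_\eps(x,y)) - \tilde u_\eps(x,y) - 1\bigr]
- \frac{q}{2}\log\!\left(\frac{\eps^2 + x(F_\eps(x,y))^2}{\eps^2 + x^2}\right).
\]
By Lemma \ref{lemma_trasl} the first bracket equals $qx + ry + O_2(x,y) + O(\eps^2)$, so it suffices to show the logarithmic term equals $-qx + O_2(x,y) + O(\eps^2)$.

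For this, I would use $x(F_\eps) = x + (x^2+\eps^2)\alpha_\eps(x,y)$ to compute directly
\[
\eps^2 + x(F_\eps)^2
= (\eps^2 + x^2)\bigl(1 + 2x\,\alpha_\eps + (x^2+\eps^2)\alpha_\eps^{\,2}\bigr),
\]
so that the ratio inside the logarithm equals $1 + 2x\,\alpha_\eps + (x^2+\eps^2)\alpha_\eps^{\,2}$. Since $\alpha_\eps(x,y) = 1 + (q+1)x + ry + O_2(x,y) + O(\eps^2)$, the quantity $2x\,\alpha_\eps + (x^2+\eps^2)\alpha_\eps^{\,2}$ is of the form $2x + O_2(x,y) + O(\eps^2)$. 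Expanding $\log(1+z) = z + O(z^2)$ (valid since $z \to 0$ on $\tilde C_\eps \cup \tilde D_\eps$ when the parameters are small enough) then gives
\[
\log\!\left(\frac{\eps^2 + x(F_\eps)^2}{\eps^2 + x^2}\right) = 2x + O_2(x,y) + O(\eps^2),
\]
and multiplying by $-q/2$ produces the desired $-qx + O_2(x,y) + O(\eps^2)$. Summing with the first bracket cancels the $qx$'s and leaves $A_\eps(x,y) = ry + O_2(x,y) + O(\eps^2)$.

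There is no real obstacle here; the content of the statement is exactly the algebraic identity that the log term was designed to produce. The only point that requires care is bookkeeping of orders of vanishing: one must verify that no linear $x$-contribution survives after expanding $\log(1 + 2x\alpha_\eps + \dots)$, i.e.\ that higher-order terms in the Taylor series of the logarithm, as well as the $(x^2+\eps^2)\alpha_\eps^{\,2}$ contribution, are swept into $O_2(x,y) + O(\eps^2)$. This is the step where the algebraic coincidence between the coefficient of $x$ in $\alpha_\eps$ and the coefficient $q$ in the definition of $\tilde w_\eps$ is used in an essential way.
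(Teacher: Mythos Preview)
Your proof is correct and follows essentially the same approach as the paper: split $A_\eps$ into the $\tilde u_\eps$ translation error (handled by Lemma~\ref{lemma_trasl}) and the logarithmic correction, factor $\eps^2 + x(F_\eps)^2 = (\eps^2+x^2)\bigl(1+2x\alpha_\eps + (x^2+\eps^2)\alpha_\eps^{\,2}\bigr)$, and observe that the ratio is $1+2x+O_2(x,y)+O(\eps^2)$ so that the log contributes $-qx$ and cancels the linear $x$-term. The only minor imprecision is in your closing commentary: the relevant matching is between the $q$ in $\tilde w_\eps$ and the coefficient of $x$ in $\gamma_\eps$ (equivalently, in the output of Lemma~\ref{lemma_trasl}), not in $\alpha_\eps$ itself, whose $x$-coefficient is $q+1$.
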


Notice that, differently from \cite{bsu}, here the error is still linear in $y$. The reason is that we do not add any correction
term in $y$ in the expression of $\tilde w_\eps$.
On the other hand, 
by our assumptions we do not have any linear dipendence in $\eps$.

\begin{proof}
 The computation is analogous to the one in \cite{bsu}.
 By the definition of $\t w_\eps$ and the analogous property of $\t u_\eps$ (Lemma \ref{lemma_trasl})
 we have
 \[
 \begin{aligned}
 \tilde w_\eps (
 F_\eps (x,y) ) - \tilde{w_\eps} (x,y)  & = &&
  \tilde u_\eps (
  F_\eps (x,y) ) - \tilde{u_\eps} (x,y)\\
& && - \frac{q}{2}\log(\eps^2 + x(
F_\eps (x,y))^2)
  + \frac{q}{2}\log(\eps^2 + x^2)\\
  & = &&
    1 + qx + ry + O_2 (x,y) + O(\eps^2)\\ & && -\frac{q}{2} \log \frac{\eps^2 + x(
    F_\eps (x,y))^2}{\eps^2 + x^2}.
 \end{aligned}\]
 It is thus sufficient to prove that
 \[
 \frac{\eps^2 + x(
 F_\eps (x,y))^2}{\eps^2 + x^2} = 1+2x + O_2 (x,y) + O (\eps^2).
 \]
 But
\[
\begin{aligned}
\eps^2 + x(
F_\eps (x,y))^2
&=
\eps^2 + x^2 + (x^2 + \eps^2)^2 \alpha_\eps^2 (x,y) +2x(x^2 + \eps^2)\alpha_\eps (x,y)\\
&= (x^2 + \eps^2 )(1+ 2x \alpha_\eps (x,y) + O(x^2,\eps^2))\\
&=
(x^2 +\eps^2) (1+ 2x + O_2 (x,y)+ O(\eps^2))
\end{aligned}
\] 
and the assertion follows. 
\end{proof}

Let us finally introduce the \emph{incoming almost Fatou coordinate},
by means of the $\t w^\iota_\eps$, as it was done for the map $F_0$ in \eqref{eq_def_fai_0n}.
Set
\begin{equation}\label{eq_defi_almost_coord}
\t \fai_{\eps, n} (p):=
\tilde w^\iota_\eps (F^n_\eps (p) ) -n
= \tilde w^\iota_\eps (p ) + \sum_{j=0}^{n-1}A_\eps (F^j_\eps (p)).
\end{equation}

We shall be particularly interested in the following relation between the parameter
$\eps$ and the number of iterations.

\begin{defi}\label{defi_bounded}
A sequence
$(\eps_\nu, m_\nu) \subset (\C \times \N)^\N$ such that $\eps_\nu \to 0$
will be said \emph{of bounded type} if
$\frac{\pi}{2\eps_\nu}-m_\nu$ is bounded in $\nu$. 
\end{defi}
Notice
that, given an $\alpha$-sequence $(\eps_\nu, n_\nu)$, the sequence $(\eps_\nu, n_\nu/2)$
is of bounded type.

The following result in particular proves that the coordinates
$\t w^\iota_\eps$ satisfy
the second request. This convergence will be crucial in order to prove
Theorem \ref{teo_lav_2d}.
Here $\t \fai$
denotes the Fatou coordinate on $\t C_0$
given by Lemma \ref{lemma_prima_coord}.

\begin{teo}\label{lemma_conv_fai}
 Let $(\eps_\nu, m_\nu)_{\nu \in \N}$ be a sequence of bounded type.
 Then
 \[
 \t \fai_{\eps_\nu, m_\nu} \to \t {\fai}
 \]
locally uniformly on $\t C_0$.
 \end{teo}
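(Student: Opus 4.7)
I would compare the two sides term by term. The leading term $\t w^\iota_{\eps_\nu}(p)$ converges to $\t w^\iota_0(p)$ locally uniformly on $\t C_0$: the added constant $\pi/(2\eps)$ in the definition of $\t w^\iota_\eps$ is precisely what cancels the singular part of $u_\eps$ at $x=0$, so a direct expansion of $\frac{1}{2i\eps}\log\!\big(\frac{i\eps-x}{i\eps+x}\big)+\frac{\pi}{2\eps}$ in powers of $\eps$ gives $-1/x + O(\eps^2/x^3)$, which tends to $\t w^\iota_0(x,y)$ uniformly on compacts of $\t C_0$. The problem thus reduces to showing
\[
S_\nu(p) := \sum_{j=0}^{m_\nu-1} A_{\eps_\nu}(F^j_{\eps_\nu}(p)) \longrightarrow \sum_{j=0}^{\infty} A_0(F^j_0(p)) =: S_0(p),
\]
locally uniformly on $\t C_0$.

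I would use a three-piece comparison. Fix a compact $\Cc \subset \t C_0$ and $\delta>0$. By the normal convergence established in the proof of Lemma \ref{lemma_prima_coord}, there is $N=N(\delta,\Cc)$ with $\sum_{j\geq N}|A_0(F^j_0(p))|<\delta$ for every $p\in\Cc$. For this fixed $N$ the head $\sum_{j<N}A_{\eps_\nu}(F^j_{\eps_\nu}(p))$ tends to $\sum_{j<N}A_0(F^j_0(p))$ uniformly on $\Cc$, since $F_\eps$ and $A_\eps$ depend analytically on $\eps$ and a finite number of iterations is involved. It remains to bound the middle tail $\sum_{j=N}^{m_\nu-1}|A_{\eps_\nu}(F^j_{\eps_\nu}(p))|$ uniformly in $\nu$ by a quantity vanishing as $N\to\infty$.

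The estimate on the tail is the core of the argument and combines, in order: Lemma \ref{lemma_invarianza_inizio}, which keeps $F^j_{\eps_\nu}(p)$ in $\t C_{\eps_\nu}\cup \t D_{\eps_\nu}$ for every $j\le m_\nu$; Lemma \ref{lemma_trasl}, which yields the quasi-linear growth $\Re\!\big(\tfrac{\eps_\nu}{|\eps_\nu|}\t u_{\eps_\nu}(F^j_{\eps_\nu}(p))\big)=\Re\!\big(\tfrac{\eps_\nu}{|\eps_\nu|}\t u_{\eps_\nu}(p)\big)+j\bigl(1+O(\rho''-1)\bigr)$; and item \eqref{item3_lemma_pr_u_eps} of Lemma \ref{lemma_pr_u_eps} to convert this growth back into a pointwise bound $|x(F^j_{\eps_\nu}(p))|\le C/(j+1)$, valid for all $0\le j\le m_\nu-1$. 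Coupled with a Hakim-type invariance $|y(F^j_{\eps_\nu}(p))|\le C\,|x(F^j_{\eps_\nu}(p))|^{\alpha+1}$ (a perturbative version of \eqref{eq_xy_hakim}, which holds because $F_{\eps_\nu}=F_0+O(\eps^2)$ and $\rho>1$) and the bounded-type bound $|\eps_\nu|^2=O(1/m_\nu^2)=O(1/(j+1)^2)$, Proposition \ref{prop_error_A} gives $|A_{\eps_\nu}(F^j_{\eps_\nu}(p))|\le C/(j+1)^{1+\alpha}$, summable uniformly in $\nu$; its tail from $N$ onward is $O(N^{-\alpha})$, which becomes smaller than $\delta$ by increasing $N$. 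A $\delta/3$ argument then concludes.

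The main obstacle is obtaining these two uniform pointwise estimates on $x(F^j_{\eps_\nu}(p))$ and $y(F^j_{\eps_\nu}(p))$ in the whole range $0\le j\le m_\nu-1$, in particular as $F^j_{\eps_\nu}(p)$ crosses from the Hakim domain $\t C_{\eps_\nu}$ into the gate $\t D_{\eps_\nu}$, where the one-dimensional Hakim estimates no longer apply verbatim and where orbits are no longer purely attracted to the origin. Implementing this amounts to carrying out a perturbative version of the Hakim invariance along orbits of $F_{\eps_\nu}$ up to the center of the gate, which is precisely the content of Sections \ref{section_fasi} and \ref{section_prelim_convergence}.
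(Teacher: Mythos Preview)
Your overall architecture matches the paper's: reduce to the convergence of the error sum, split into a finite head (handled by continuity in $\eps$) and a tail, and control the tail via uniform pointwise bounds on $x(F^j_{\eps_\nu}(p))$, $y(F^j_{\eps_\nu}(p))$ and $\eps_\nu^2$ coming from Proposition~\ref{prop_error_A}. The paper organizes the comparison slightly differently (it compares $\t\fai_{\eps_\nu,m_\nu}$ to $\t\fai_{0,m_\nu}$ and splits the error into $\sum_I+\sum_{II}$), but this is cosmetic.

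The substantive gap is your $y$-estimate. The ``perturbative Hakim invariance'' $|y(F^j_{\eps_\nu}(p))|\le C\,|x(F^j_{\eps_\nu}(p))|^{\alpha+1}$ that you invoke is \emph{false} in the gate $\t D_{\eps_\nu}$: there $|x|$ can be arbitrarily small (the orbit passes near $x=0$), while $|y|$ remains of order $|\eps_\nu|^{\t\rho}$, so the ratio $|y|/|x|^{\alpha+1}$ blows up. The justification ``$F_{\eps_\nu}=F_0+O(\eps^2)$'' does not help, because the $O(\eps^2)$ perturbation is precisely of the same size as $x^2$ in the region where the invariance breaks down. Note also that item~\eqref{item3_lemma_pr_u_eps} of Lemma~\ref{lemma_pr_u_eps}, which you use for the $x$-bound, only applies while $\Re\big(\tfrac{\eps}{|\eps|}u_\eps(x)\big)<-\tfrac{\pi}{4|\eps|}$, i.e.\ before the gate; the bound $|x|\le C/(j+1)$ does happen to persist in $\t D_{\eps_\nu}$ (since $|x|\le\tau|\eps_\nu|\lesssim 1/m_\nu\le 1/(j+1)$), but via a different mechanism than the one you cite.

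What the paper actually does for $y$ is different and more delicate. In $\t C_{\eps_\nu}$ it proves a product bound $|y(F^J_{\eps_\nu}(p))|\le c_1|y(p)|\prod_{l=M^+}^{M^++J-1}(1-\t\rho/l)\sim C/J^{\t\rho}$ with $\t\rho>1$ (Proposition~\ref{prop_stima_y_prima}); this requires a \emph{lower} bound on $|x(F^j_{\eps_\nu}(p))|$ (Proposition~\ref{prop_stima_prima_x_completa}), which in turn comes from a comparison with an explicit model orbit (Lemma~\ref{lemma_confronto_modello}). In $\t D_{\eps_\nu}$ it uses instead that $|y|$ is already of size $|\eps_\nu|^{\t\rho}$ at entry and grows by at most a fixed factor across the gate (Proposition~\ref{prop_stima_y_centro}), so that $\sum_{j>n_p(\eps_\nu)}|y(F^j_{\eps_\nu}(p))|\lesssim |\eps_\nu|^{\t\rho-1}\to 0$. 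These two regimes are assembled in Lemma~\ref{lemma_conv_y}, which is the technical core feeding into the proof. Your last paragraph correctly identifies this as the main obstacle, but the estimate you state there is not the right one; you should replace the Hakim-type ratio inequality by the direct decay $|y(F^j_{\eps_\nu}(p))|\lesssim j^{-\t\rho}$ in $\t C_{\eps_\nu}$, together with a separate treatment of the gate.
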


We can also define the outgoing almost Fatou coordinates on $-\t C_0$
as
 \[
\t \fao_{\eps, n} (p):= \t w^o (F^{-n}_\eps (p)) + n
 \]
(recall that by assumption $-\t C_0$ is contained in a neighbourhood
$U$ of the origin where $F_\eps$ is invertible, for $\eps$ sufficiently small).
 The following convergence is then an immediate consequence of Theorem \ref{lemma_conv_fai}
 applied to the inverse system.
 
 \begin{cor}\label{cor_conv_fao}
  Let $(\eps_\nu, m_\nu)$ be a sequence
 of bounded type. 
 Then
  \[
 \t  \fao_{\eps_\nu,m_\nu}\to \t {\fao}
  \]
  locally uniformly on $-\t C_0$.
 \end{cor}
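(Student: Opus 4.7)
The plan is to apply Theorem \ref{lemma_conv_fai} verbatim to the inverse family $G_\eps := F_\eps^{-1}$, which is well-defined on the neighbourhood $U$ of the origin for $\eps$ sufficiently small. As recalled in Section \ref{section_prelim}, $G_0$ is again tangent to the identity, with the same non-degenerate characteristic direction $[1:0]$ and the same Hakim director $\rho-1>0$; its coordinate form matches \eqref{eq_F0} up to the involution $x\mapsto -x$, and the attracting basin of $G_0$ for this direction is precisely $-\t C_0$. Since the perturbation in \eqref{eq_family} involves only $\eps^2$, the family $G_\eps$ is again a perturbation of the form considered in the theorem, so all the estimates built in Sections \ref{section_prelim} and \ref{section_perturbed} for $F_\eps$ transfer to $G_\eps$ on $-\t C_0$.

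Next, I would identify the incoming almost Fatou coordinate of $G_\eps$ on $-\t C_0$, built via \eqref{eq_def_u_eps}--\eqref{eq_defi_almost_coord}, with the outgoing almost Fatou coordinate $\t \fao_{\eps,n}$ of $F_\eps$. The base change $u_\eps$ is common to both constructions; the only difference is the normalization constant $\pm \pi/(2\eps)$ distinguishing $\t w^\iota_\eps$ from $\t w^o_\eps$, and this sign is precisely what flips when one exchanges $F_\eps$ with $G_\eps$ (and thus $\t C_0$ with $-\t C_0$). Concerning the time parameter, since $F_\eps$ and $G_\eps$ share the same fixed points $\pm i\eps$, any sequence $(\eps_\nu, m_\nu)$ of bounded type in the sense of Definition \ref{defi_bounded} remains of bounded type when $F_\eps$ is replaced by $G_\eps$. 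After this identification, Theorem \ref{lemma_conv_fai} applied to $G_\eps$ yields the stated convergence, with the limit $\t \fao$ constructed in Section \ref{section_prelim} playing the role of the limit Fatou coordinate for the incoming basin of $G_0$.

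The only technical point is the bookkeeping of the signs under $x\mapsto -x$ when translating each object constructed in Sections \ref{section_prelim} and \ref{section_perturbed} (namely $\t w^\iota_\eps$, $A_\eps$, $\t \fai_{\eps,n}$) into the corresponding object for $G_\eps$. This is a routine verification exploiting that $\eps$ appears only through $\eps^2$ in \eqref{eq_family}, and no new analytic estimate beyond Theorem \ref{lemma_conv_fai} is required.
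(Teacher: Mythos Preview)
Your approach is exactly the one the paper takes: the paper's entire proof is the single sentence ``an immediate consequence of Theorem \ref{lemma_conv_fai} applied to the inverse system,'' and your proposal simply unpacks this by passing to $G_\eps=F_\eps^{-1}$ (equivalently to $H_\eps$ of \eqref{eq_family_he} after the conjugation $x\mapsto -x$) and matching the incoming data for $H_\eps$ with the outgoing data for $F_\eps$. The sign bookkeeping you flag is indeed routine (using that $u_\eps$ is odd, the logarithmic correction is even, and $q\mapsto -q$ under the conjugation), so nothing further is needed.
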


To prove Theorem \ref{lemma_conv_fai}, we need to estimate
the series of the errors in \eqref{eq_defi_almost_coord}. In particular,
we need to bound the modulus of the two coordinates of the orbit $F_\eps^j (p)$, for $p\in \t \C_0$
and $j$ up to (approximately) $\pi/2\aeps$. This is the content of the next section. The proof
of Theorem \ref{lemma_conv_fai} will be then given in Section \ref{section_prelim_convergence}.

In our study, we will need to carefully
 compare the behaviour of $F_\eps$ in $\t C_0$ and the one of $F^{-1}_{\eps}$
 on $-\t C_0$.
 Notice that $F_\eps^{-1}$ is given by
\[
F^{-1}_\eps 
\left(
\begin{array}{c}
 x\\
 y
\end{array}\right)
=
\left(\begin{array}{c}
       x - (x^2 + \eps^2) (1+   (q-1)x + r y + + O(\eps^2) + O_2(x,y))\\
       y(1 - \rho x + O(\eps^2) + O_2(x,y))
      \end{array}\right)
\]
In order to compare the behaviour of the orbits for $F^{-1}_\eps$
with the ones for $F_\eps$, it will be useful
to consider the change of coordinate $(x,y)\mapsto (-x,y)$ and thus
study the maps
 \begin{equation}\label{eq_family_he}
\begin{aligned}
H_\eps 
\left(
\begin{array}{c}
 x\\
 y
\end{array}\right)
& =
\left(\begin{array}{c}
       x + (x^2 + \eps^2) (1+   (-q+1)x + r y + + O(\eps^2) + O_2(x,y))\\
       y(1 + \rho x + O(\eps^2) + O_2(x,y))
      \end{array}\right)\\
& =      \left(
\begin{array}{c}
 x+(x^2 +\eps^2) \alpha^H_\eps (x,y)\\
       y (1+ \rho x + \beta^H_\eps (x,y))
\end{array}
\right)
 \end{aligned}      
\end{equation}
In this way, we can study both $F_\eps$ and $H_\eps$
in the same region of space. Notice that
the main difference between $F_\eps$ and $H_\eps$ is that the coefficient $q$ has changed sign.

\section{The estimates for the points in the orbit}\label{section_fasi}

In this section we are going to study the 
orbit of a point $p\in \t C_0$
under the iteration of $F_\eps$. In particular,
since the main application we have in mind
is the study of $F^{n_\nu}_{\eps_\nu}$
when $(\eps_\nu, n_\nu)$
is an $\alpha$-sequence, we shall be primarily interested
in the study of orbit up to an order
of
$\pi/\aeps$ iterations.

Recall that the set $\t C_0$ is given by Proposition
\ref{prop_hakim_26} and in particular consists of points that converge
to the origin under $F_0$ tangentially to the (negative) real axis of the complex direction $[1:0]$.
We shall still assume (by taking $R \ll 1$ small enough)
that $\t C_0$ is contained in a small neighbourhood $U$ of the origin
where $F_0$ and $F_\eps$
are invertible, for $\eps$
sufficiently small.

By Lemma \ref{lemma_pr_u_eps},
for every compact $\Cc\subset \t C_0$
there exist two constants $M^- (\Cc)$
and $M^+ (\Cc)$ such that
\begin{equation}\label{eq_MM_section_coord}
 -\frac{\pi}{2\aeps} + M^-(\Cc) \leq \Re \pa{\frac{\eps}{\aeps} \t u_\eps(p)}\leq  -\frac{\pi}{2\aeps} + M^+(\Cc)
 \quad
 \forall p \in \Cc, \forall \eps \leq \eps_0.
\end{equation}
Without loss of generality, we will assume that $M^-$
and $M^+$
are integers and $\gg 1$ (since $R\ll 1$).

We shall divide the estimates of the coordinates
of $F_\eps^j (p)$
according to its position with respect to the set
$\t D_\eps$, i.e., according to the position of $x(F^j_\eps (p))$ with respect to $D_\eps$
as in \eqref{eq_zona_de}.
The following notation will be consistently used through all our study.

\begin{defi}\label{defi_entering}
Given $p\in \t C_0$ and $\eps$ such that $p\in \t C_\eps$,
we define the \emph{entry time} $n_p (\eps)$ and the \emph{exit time} $n'_p (\eps)$ by
\begin{equation}\label{eq_def_entering}
\begin{aligned}
& n_p (\eps) := \min \set{ j \in \N \colon F_\eps^j (p) \in \tilde D_\eps }\\
& n'_p (\eps): = \min\set{j\in \N\colon F^j_\eps \notin \t C_\eps \cup \t D_\eps}
\end{aligned}
\end{equation}
\end{defi}

The next Proposition gives the bounds
on $n_p (\eps)$ that we shall need in the sequel.

\begin{prop}\label{prop_prima_stime_1}
 Let
$\Cc \subset \t C_0$
be a compact subset and $M^-, M^+$
be as in \eqref{eq_MM_section_coord}.
Then,
 for every $p=(x,y)  \in \Cc$
 and $\eps$ sufficiently small,
\[
 \frac{K}{\rho'' \aeps} - \frac{M^+}{\rho''}
 \leq n_p (\eps) \leq \frac{K}{(2- \rho'')\aeps} - \frac{M^-}{2-\rho''}.
\]
In particular,
$F^j_\eps (p) \in  \t C_\eps$ for $0 \leq j < \frac{K}{\rho''\aeps} - \frac{M^+}{\rho''}$.
\end{prop}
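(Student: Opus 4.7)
The strategy is to view $g_j:=\Re\bigl(\tfrac{\eps}{\aeps}\,\t u_\eps(F_\eps^j(p))\bigr)$ as a monotone ``clock'' measuring the progress of the orbit toward $\t D_\eps$. By \eqref{eq_MM_section_coord} we have $g_0 \in \bigl[-\tfrac{\pi}{2\aeps}+M^-,\,-\tfrac{\pi}{2\aeps}+M^+\bigr]$, and by the defining condition \eqref{eq_zona_de} for $D_\eps$ the iterate $F_\eps^{n_p(\eps)}(p)\in \t D_\eps$ satisfies $g_{n_p(\eps)} > -\tfrac{\pi}{2\aeps}+K/\aeps$. Lemma \ref{lemma_trasl} is exactly the statement that, whenever $F_\eps^j(p)\in \t C_\eps\cup \t D_\eps$, the single-step increment $g_{j+1}-g_j$ lies in the interval $(2-\rho'',\rho'')$, so the proof reduces to applying this increment bound at every step up to the entry time.

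To justify the repeated use of Lemma \ref{lemma_trasl}, observe first that since $\Cc$ is compact in $\t C_0$ and $\t D_\eps$ shrinks to the origin, for all sufficiently small $\eps$ we have $\Cc \subset \t C_\eps$. I then argue by induction on $j$ that $F_\eps^j(p)\in \t C_\eps$ for every $0\leq j<n_p(\eps)$: the base case holds by the previous sentence, and the inductive step combines Lemma \ref{lemma_invarianza_inizio}, which gives $F_\eps(\t C_\eps)\subset \t C_\eps\cup \t D_\eps$, with the fact that, by definition of the entry time, $F_\eps^{j+1}(p)$ cannot yet belong to $\t D_\eps$. This establishes the ``in particular'' clause and simultaneously legitimizes the telescoping estimate
\[
g_0+(2-\rho'')\,j \;\leq\; g_j \;\leq\; g_0+\rho''\,j \qquad (0\leq j\leq n_p(\eps)).
\]

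The two bounds on $n_p(\eps)$ now follow. Evaluating at $j=n_p(\eps)$ and combining $g_{n_p(\eps)} > -\tfrac{\pi}{2\aeps}+K/\aeps$ with $g_{n_p(\eps)} \leq -\tfrac{\pi}{2\aeps}+M^++\rho''\,n_p(\eps)$ yields the lower bound $n_p(\eps) \geq \tfrac{K}{\rho''\aeps}-\tfrac{M^+}{\rho''}$. For the upper bound, $F_\eps^{n_p(\eps)-1}(p)\notin \t D_\eps$ forces $g_{n_p(\eps)-1}\leq -\tfrac{\pi}{2\aeps}+K/\aeps$; combined with $g_{n_p(\eps)-1} \geq -\tfrac{\pi}{2\aeps}+M^-+(2-\rho'')(n_p(\eps)-1)$, this yields $n_p(\eps)\leq \tfrac{K}{(2-\rho'')\aeps}-\tfrac{M^-}{2-\rho''}$, after the harmless absorption of the additive $1$ into the integer constant $M^-$.

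The main subtlety I anticipate is that $n_p(\eps)$ is determined by the product set $\t D_\eps$, which imposes a $y$-coordinate constraint that the clock $g_j$ does not monitor directly. Fortunately the invariance Lemma \ref{lemma_invarianza_inizio} already packages the needed $y$-control into the preservation of $\t C_\eps\cup \t D_\eps$, so no additional $y$-estimate is required before entry; and, by \eqref{eq_inclusioni_Deps}, the strip $D_\eps$ has width of order $\pi/\aeps$, so a step of size less than $\rho''<2$ cannot skip over it, making the two bounds tight up to an $O(1)$ absorbed in $M^\pm$.
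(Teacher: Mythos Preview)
Your proof is correct and follows essentially the same route as the paper: both use Lemma \ref{lemma_invarianza_inizio} to keep the orbit in $\t C_\eps\cup\t D_\eps$, then apply the increment bound from Lemma \ref{lemma_trasl} to the quantity $\Re\bigl(\tfrac{\eps}{\aeps}\t u_\eps(F_\eps^j(p))\bigr)$ to obtain the sandwich inequality \eqref{eq_stima_reu}, and finally read off the bounds on $n_p(\eps)$ from the defining threshold of $D_\eps$. Your version is simply more explicit about the induction and about why the $y$-constraint in $\t D_\eps$ is automatically satisfied, points the paper leaves implicit.
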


\begin{proof}
Notice that, since $F_\eps (\tilde C_\eps) \subset \tilde C_\eps \cup \tilde D_\eps$ (by Lemma  \ref{lemma_invarianza_inizio}),
we only have to study the
first coordinate of the orbit.
Since $\t C_\eps \to \t C_0$, we have that
$\Cc \subset \t C_\eps$
 for $\eps$ sufficiently small.
From Lemma \ref{lemma_trasl} it follows
that
\[
2-\rho'' <
\Re \pa{
\frac{\eps}{\aeps}
\t  u_\eps (F_\eps (p))
}
-
\Re
\pa{
\frac{\eps}{\aeps} 
\t u_\eps (p)
}
< \rho''.
\]
Thus, we
deduce that
\begin{equation}\label{eq_stima_reu}
-\frac{\pi}{2\aeps} + M^-
+ (2-\rho'')j
<
\Re \pa{
\frac{\eps}{\aeps}
\t u_\eps (F^j_\eps (q))
}
<
-\frac{\pi}{2\aeps}
+ M^+ +
\rho'' j
\end{equation}
and the assertion
follows from the definition of $D_\eps$ (see \eqref{eq_zona_de}).
\end{proof}

\subsection{Up to $n_p (\eps)$}
Given $p$ in some compact subset $\Cc \in \t C_0$, here we study the modulus of
the two coordinates of the points in the orbit for $F_\eps$ of
$p$ until they fall in $\t D_\eps$, i.e., for a number of iteration up to $n_p (\eps)$.
We start estimating
the first coordinate. Here we shall make use of the definition of $K$ (see \eqref{eq_def_Ktau}).

\begin{lemma}\label{lemma_stima_x_modulo}
Let
$\Cc \subset \t C_0$
be a compact subset and $M^-$
be as in \eqref{eq_MM_section_coord}. Then
 \[
 \abs{x(F^j_\eps (p))} \leq \frac{2}{j+ M^-}
 \]
 for every $p\in \Cc$,
 for $\eps$ small enough and $j\leq n_p (\eps)$.
\end{lemma}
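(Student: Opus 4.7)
The plan is to feed the lower bound on $\Re\bigl(\tfrac{\eps}{\aeps}\tilde u_\eps(F^j_\eps(p))\bigr)$ already established inside the proof of Proposition \ref{prop_prima_stime_1} into part \eqref{item3_lemma_pr_u_eps} of Lemma \ref{lemma_pr_u_eps}. Concretely, from \eqref{eq_stima_reu} applied to any $p\in\Cc$ (and with $\Cc\subset\t C_\eps$ once $\eps$ is small), one has
\[
\Re\pa{\tfrac{\eps}{\aeps}\,\tilde u_\eps\bigl(F^j_\eps(p)\bigr)}\,>\,-\tfrac{\pi}{2\aeps}+M^-+(2-\rho'')j
\]
for every $j$ in the range where the orbit stays in $\t C_\eps\cup\t D_\eps$, in particular for all $j\le n_p(\eps)$ by Lemma \ref{lemma_invarianza_inizio} and the definition of $n_p(\eps)$.

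The next step is to check that the upper bound needed in Lemma \ref{lemma_pr_u_eps}\eqref{item3_lemma_pr_u_eps}, namely $\Re\bigl(\tfrac{\eps}{\aeps}u_\eps(x(F^j_\eps(p)))\bigr)<-\tfrac{\pi}{4\aeps}$, holds throughout $j\le n_p(\eps)$. For $j<n_p(\eps)$ the orbit point lies in $\t C_\eps$, so by the very definition \eqref{eq_zona_de} of $D_\eps$ its real part is at most $-\tfrac{\pi}{2\aeps}+\tfrac{K}{\aeps}$, which is $\le-\tfrac{\pi}{4\aeps}$ because $K\le\pi/4$. At the borderline index $j=n_p(\eps)$ the point has just entered $\t D_\eps$, so by Lemma \ref{lemma_trasl} its real part exceeds the previous value by at most $\rho''$; hence it is still bounded above by $-\tfrac{\pi}{2\aeps}+\tfrac{K}{\aeps}+\rho''$, which stays below $-\tfrac{\pi}{4\aeps}$ as soon as $\aeps$ is so small that $\rho''\aeps<\tfrac{\pi}{4}-K$. (Recall that $\rho''$ has been chosen close enough to $1$ so that $K<\pi/4$ strictly, leaving room for this.)

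With the hypothesis of Lemma \ref{lemma_pr_u_eps}\eqref{item3_lemma_pr_u_eps} verified, we immediately obtain
\[
\abs{x(F^j_\eps(p))}\le\frac{1}{\frac{\pi}{2\aeps}+\Re\bigl(\frac{\eps}{\aeps}u_\eps(x(F^j_\eps(p)))\bigr)}\le\frac{1}{M^-+(2-\rho'')j}.
\]
Since $\rho''<5/4$ gives $2-\rho''>3/4>1/2$, the denominator satisfies $2\bigl(M^-+(2-\rho'')j\bigr)\ge 2M^-+j\ge M^-+j$, so the right-hand side is bounded by $2/(j+M^-)$, which is exactly the claim.

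The only genuine delicacy is the transition step $j=n_p(\eps)$: once the orbit crosses the boundary of $\t D_\eps$ the simple geometric bound from the definition of $D_\eps$ no longer suffices, and one must absorb the single-step jump (of size $\le\rho''$) provided by Lemma \ref{lemma_trasl} into the reserve $\pi/4-K$ built into the choice of $\rho''$; the rest of the argument is essentially bookkeeping.
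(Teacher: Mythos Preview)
Your proof is correct and follows essentially the same route as the paper: feed the lower bound from \eqref{eq_stima_reu} into Lemma~\ref{lemma_pr_u_eps}\eqref{item3_lemma_pr_u_eps}, then use $2-\rho''>3/4$ to absorb the constant. The paper's proof is terser and does not explicitly verify the upper hypothesis $\Re\bigl(\tfrac{\eps}{\aeps}u_\eps\bigr)<-\tfrac{\pi}{4\aeps}$ at the borderline step $j=n_p(\eps)$; your extra paragraph handling that transition (using the single-step bound from Lemma~\ref{lemma_trasl} and the slack $\pi/4-K$) is a genuine clarification rather than a different argument.
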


\begin{proof}
The statement
follows from Lemma \ref{lemma_pr_u_eps} \eqref{item3_lemma_pr_u_eps}
and the (first) inequality in \eqref{eq_stima_reu}.
Indeed, we have (recall that $3/4 < 2-\rho''<1$)
\[
\begin{aligned}
\abs{x(F^j_\eps(p))}
&
<\frac{1}{
\frac{\pi}{2\aeps}+\Re ( \frac{\eps}{\aeps} \t u_\eps (F^j_\eps (p)))
}\\
&\leq
\frac{1}{
\frac{\pi}{2\aeps} - \frac{\pi}{2\aeps}
+ (2-\rho'')j + M^-
}
\leq \frac{1}{2-\rho''} \frac{1}{j+ M^-}
\leq \frac{2}{j+ M^-}.
\end{aligned}
\]
and the inequality is proved.
\end{proof}

We now come to the second coordinate. Estimating
this is the main difference between our
setting and the semiparabolic one.
Notice that, by \eqref{eq_family},
in order to bound the terms $\abs{y (F^j_\eps (p))}$, we will need to get an estimate
from below of the first coordinate.
This will be done by means of the following lemma.

\begin{lemma}\label{lemma_confronto_modello}
Let
$\Cc \subset \t C_0$
be a compact subset and $M^-$
be as in \eqref{eq_MM_section_coord}.
Let $p,q \in \Cc$ and
set $q_j:= \eps\pa{\tan \eps (\t u_\eps (q) + j) }$
and $\t q_j:= \eps\pa{\tan \eps (\t u_\eps (q) + \aeps j/\eps) }$.
Then,
for some positive
constants $C$ depending on $\Cc$ and $C_\eps$
depending on $\Cc$
and $\eps$, and going to zero as $\Re \eps\to 0$,
\begin{equation}\label{eq_stima_bootstrap}
\abs{ x(F^j_\eps (p))-  q_j}< C \frac{ 1+\log (M^- + j)}{(M^- +j)^2}
\end{equation}
and
\begin{equation}\label{eq_stima_bootstrap_2}
\abs{ x(F^j_\eps (p))-  \t q_j}< C \frac{ 1+\log (M^- + j)}{(M^- +j)^2} + C_\eps \frac{1}{M^+ + j}
\end{equation}
for every $0\leq j \leq
n_p (\eps)$.
\end{lemma}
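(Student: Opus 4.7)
The plan is to work in the chart $\t u_\eps$, where Lemma \ref{lemma_trasl} shows that $F_\eps$ acts as a translation by $1$ up to a controlled error, telescope this error along the orbit of $p$, and transport the resulting bound back to the first coordinate via the explicit inverse $w\mapsto \eps\tan(\eps w)$ of $u_\eps$. The key analytic observation is that
\[
\der{}{w}\bra{\eps \tan (\eps w)} = \eps^2 \sec^2 (\eps w) = \eps^2 + \pa{\eps \tan(\eps w)}^2,
\]
so that at a point $w$ with $\eps\tan(\eps w)=\xi$ the derivative is $\eps^2+\xi^2$. In this way the blow-up of $\sec^2$ near the boundary of the strip is exactly compensated by the quadratic decay of the first coordinate along the orbit that was established in Lemma \ref{lemma_stima_x_modulo}.

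First I would iterate Lemma \ref{lemma_trasl} to get, for every $0\le j\le n_p(\eps)$,
\[
\t u_\eps(F^j_\eps(p)) - \t u_\eps(p) - j
= \sum_{i=0}^{j-1}\bigl[\, q\, x(F^i_\eps(p)) + r\, y(F^i_\eps(p)) + O_2(F^i_\eps(p)) + O(\eps^2)\,\bigr].
\]
On the compact $\Cc\subset \t C_0$, Lemma \ref{lemma_stima_x_modulo} gives $\abs{x(F^i_\eps(p))}\le 2/(i+M^-)$, while \eqref{eq_xy_hakim} (applied to $F_0$, to which $F_\eps$ is uniformly close on $\t C_\eps$) gives a bound $\abs{y(F^i_\eps(p))}\le C/(i+M^-)^{\alpha+1}$ uniform over $\Cc$. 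Summing, the linear-in-$x$ contribution is $O(1+\log(M^-+j))$, the quadratic and $y$-terms contribute a bounded tail, and the $O(\eps^2)$ summand is $O(\aeps^2 j)=O(\aeps)$. Hence, setting $U_j:=\t u_\eps(F^j_\eps(p))$ and $V_j:=\t u_\eps(q)+j$,
\[
\abs{U_j-V_j} \le C(1+\log(M^-+j))
\]
(the constant absorbs the $\eps$-independent term $\t u_\eps(p)-\t u_\eps(q)$, which is bounded on $\Cc$).

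To get \eqref{eq_stima_bootstrap}, note that $x(F^j_\eps(p))=\eps\tan(\eps U_j)$ and $q_j=\eps\tan(\eps V_j)$, so the mean value theorem applied along the segment joining $V_j$ and $U_j$ yields
\[
\abs{x(F^j_\eps(p))-q_j}\le \abs{U_j-V_j}\cdot \sup_{w\in[V_j,U_j]}\abs{\eps^2+(\eps\tan(\eps w))^2}.
\]
Both endpoints of the image of $\eps\tan(\eps\,\cdot\,)$ on this segment are bounded by $C/(M^-+j)$, and by holomorphicity on the ambient strip the same bound, up to a multiplicative constant, persists on the whole image; the supremum is therefore $O((M^-+j)^{-2})$, which combined with the logarithmic bound on $\abs{U_j-V_j}$ gives \eqref{eq_stima_bootstrap}.

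Finally, for \eqref{eq_stima_bootstrap_2} I would compare $q_j$ and $\t q_j$ by the same device: the arguments of the two tangents differ by $(\eps-\aeps)j$, and the assumption $\abs{\Im\eps}\le c\aeps^2$ in \eqref{eq_condizioni_eps} gives $\abs{\eps-\aeps}=O(\aeps^2)$. As before, on the relevant segment $\abs{\eps\tan(\eps w)}=O((M^+ + j)^{-1})$, so $\abs{\sec^2}=O(\aeps^{-2}(M^+ + j)^{-2})$ and the mean value theorem yields
\[
\abs{q_j-\t q_j}\le \aeps\cdot \abs{\eps-\aeps}\,j\cdot \sup\abs{\sec^2(\eps w)}
\le \frac{C\aeps^3 j}{\aeps^2 (M^+ + j)^2}\le \frac{C\aeps}{M^+ + j}.
\]
Setting $C_\eps:=C\aeps$, which tends to $0$ as $\Re\eps\to 0$, and combining with \eqref{eq_stima_bootstrap} via the triangle inequality gives \eqref{eq_stima_bootstrap_2}. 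The main technical point throughout is the balancing act in each mean value step: the $\sec^2$ factor threatens to blow up as $\eps w$ approaches $\pm\pi/2$, but is systematically tamed by rewriting it as $1+\tan^2$ and using that along the orbit $\eps\tan(\eps w)$ is pinned to the size $O((j+M^-)^{-1})$ of the actual first coordinate produced by Lemma \ref{lemma_stima_x_modulo}.
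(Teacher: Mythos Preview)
Your overall strategy matches the paper's: telescope Lemma~\ref{lemma_trasl} in the $\t u_\eps$-chart to get $\abs{U_j-V_j}\le C(1+\log(M^-+j))$, then transport back via the derivative of $w\mapsto\eps\tan(\eps w)$. Two steps, however, need repair.

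First, your bound $\abs{y(F^i_\eps(p))}\le C/(i+M^-)^{\alpha+1}$ via \eqref{eq_xy_hakim} is not legitimate: that estimate is proved for $F_0$, and ``uniform closeness on $\t C_\eps$'' does not propagate to iterates of order $1/\aeps$ (that failure is precisely the content of parabolic implosion). Fortunately you do not need it. Since the orbit stays in $\t C_\eps$ for $i<n_p(\eps)$, the cone condition $\abs{y}\le s\abs{x}$ built into the definition of $\t C_0$ gives $\abs{y(F^i_\eps(p))}\le 2s/(i+M^-)$ directly, and the $ry$ term is absorbed into the same logarithmic sum as the $qx$ term. This is what the paper does.

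Second, the phrase ``by holomorphicity on the ambient strip the same bound persists on the whole image'' is not a valid argument: a holomorphic function is not controlled on a segment by its endpoint values alone. What actually works is this. From your telescoped estimate (and the trivial analogue for $V_j$) both $U_j$ and $V_j$ satisfy $\Re\bigl(\tfrac{\eps}{\aeps}w\bigr)\ge -\tfrac{\pi}{2\aeps}+c(M^-+j)$ for some $c>0$ and $\eps$ small; by convexity the entire segment $[V_j,U_j]$ lies in that half-strip, and also in $\{\Re(\tfrac{\eps}{\aeps}w)\le -\tfrac{\pi}{4\aeps}\}$ since $j\le n_p(\eps)$ and $K\le\pi/4$. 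On this region Lemma~\ref{lemma_pr_u_eps}\eqref{item3_lemma_pr_u_eps} (equivalently the direct computation $\abs{\eps^2/\sin^2(\eps L)}\le 4/L^2$ for $\abs{\eps L}\le\pi/4$) bounds $\abs{\eps^2\sec^2(\eps w)}$ by a constant times $(M^-+j)^{-2}$. The paper carries out exactly this convexity-plus-strip argument; your identity $\eps^2\sec^2=\eps^2+(\eps\tan)^2$ is a pleasant way to see \emph{why} the derivative should be small, but it does not replace the need to locate the whole segment.

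With these two fixes your argument is correct. Your treatment of \eqref{eq_stima_bootstrap_2} via a direct $q_j$--$\t q_j$ comparison (using $\abs{\eps-\aeps}=O(\aeps^2)$ from \eqref{eq_condizioni_eps}) is equivalent to, and arguably cleaner than, the paper's route, which instead bounds $\abs{U_j-(\t u_\eps(q)+\aeps j/\eps)}$ by adding an $\abs{\arg\eps}\,j$ term and transports everything at once.
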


Notice in particular that the two estimates reduce to the same for $\eps$ real.

\begin{proof}
The idea is to first estimate the distance between the two sequences
$\t  u_\eps (F^j_\eps (p))$ and $\t u_\eps (q) + j$ (and between $\t  u_\eps (F^j_\eps (p))$ and $\t u_\eps (q) + \aeps j/\eps$)
and then to see how this distance is transformed by the application 
of the inverse of $u_\eps$.
Notice that, since $j\leq n_p(\eps)$,
by definition of $\t D_\eps$ (see \eqref{eq_zona_de})
we have $\Re \pa{ \frac{\eps}{\aeps} \t u_\eps (F^j_\eps (p)) }<- \frac{\pi}{4\aeps}$ for the points in the orbit under
consideration (since $K\leq \pi/4$).

We
first
prove that
\begin{equation}\label{eq_stima_carta_u_num}
\abs{\t  u_\eps (F^j_\eps (p)) -  \t u_\eps (q) - j} \leq C_1 \pa{1+\log (M^- +j)}.
\end{equation}
Notice that this is an improvement with respect to the
estimate obtained in Lemma \ref{lemma_trasl},
but that we shall need both that estimate and the bound from above obtained in Lemma \ref{lemma_stima_x_modulo}
in order to get this one.

By the definition of $M^-$,
we have 
that $\abs{x(p)}$ and $\abs{x(q)}$
are bounded above by $2/M^-$.
Recalling that $\abs{y}\leq s\abs{x}$ for every $(x,y)\in \t C_\eps$,
Lemma \ref{lemma_trasl} gives
\[
\abs{\t u_\eps (F^j_\eps (p)) - \t u_\eps (p)-j} \leq
 c_1 \sum_{i<j} \abs{x (F^i_\eps (p))} + c_2 \sum_{i<j} \pa{  \abs{x (F^i_\eps (p))}^2 + \abs{\eps}^2}.
\] 
Since by Lemma \ref{lemma_stima_x_modulo}
we have $\abs{x (F^j_\eps (p))}\leq2/(j+M^-)$
and the maximal number of iterations $n_p (\eps)$
is
bounded by a constant times $1/\aeps$,
 this gives
\[
\abs{\t u_\eps (F^j_\eps (p)) - \t u_\eps (p) - j} \leq C_2 \pa{1+\log (M^- +j)}
\] 
for some positive $C_2$, 
and the estimate \eqref{eq_stima_carta_u_num} follows since the two sequences
$\pa{\t u_\eps (p) + j}_j$ and $\pa{\t u_\eps (q) + j}_j$
obviously stay at constant distance.

We then consider the sequence $\t q_j$. Using \eqref{eq_stima_carta_u_num}, it is immediate to see that
\begin{equation}\label{eq_stima_carta_u_num_2}
\abs{\t  u_\eps (F^j_\eps (p)) -  \t u_\eps ( q) - \aeps j/\eps} \leq C_1 \pa{1+\log (M^- +j)} + \abs{\arg(\eps)} j,
\end{equation}
since 
the distance between
the two sequences
$\t u_\eps ( q) +  j$ and $\t u_\eps ( q) +\aeps j/\eps$.
is bounded by the last term.

We now need to estimate how the errors in \eqref{eq_stima_carta_u_num} and \eqref{eq_stima_carta_u_num_2}
are transformed when passing to the dynamical space,
and
in particular recover the quadratic denominator in \eqref{eq_stima_bootstrap}.
By \eqref{eq_stima_carta_u_num_2}
we have
\[
\begin{aligned}
 \Re \pa{ \frac{\eps}{\aeps} \t u_\eps (F^j_\eps (p)) }
 & \geq -\frac{\pi}{2\aeps} +M^- + j - C_1 \pa{1+\log (M^- +j)} - \abs{\arg \eps} j\\
 & > - \frac{\pi}{2\aeps} + C_3 (M^- + j)
\end{aligned}
 \]
for $\eps$
sufficiently small (as in \eqref{eq_condizioni_eps}), $j\leq n_p (\eps)$ and
some
$C_3>0$.
So, given
$L >0$, it is enough to
bound from above the modulus of the derivative
of the inverse of $u_\eps$ on the
strip
$\left\{-\frac{\pi}{2\aeps} + L \leq \Re \pa{ \frac{\eps}{\aeps}w}< -\frac{\pi}{4\aeps}\right\}$
by (a constant times) $1/\abs{L }^2$.
This can be done with a straightforward computation.
Recall that $u_\eps (z) = \frac{1}{\eps}\arctan \pa{\frac{z}{\eps}}$, so that 
its inverse is given by $\eps \tan (\eps w)$.
The derivative of this inverse at a point $-\pi/2\eps + w$
is thus given by
$\psi_\eps (w)=\eps^2 \pa{\cos \pa{\eps w}}^{-2}$.
On
the strip in consideration,
$\psi_\eps$
takes its maximum at $w = -\frac{\pi}{2\eps} + L$,
where we have
$\psi_\eps (-\frac{\pi}{2\eps} + L) = \eps^2 / \sin^2 (\eps L)$.
The estimate then follows since
$x\leq 2 \sin(x)$ on $[0,\pi/4]$.
\end{proof}

\begin{prop}\label{prop_stima_prima_x_completa}
Let
$\Cc \subset \t C_0$
be a compact subset,
$M^-, M^+$
be as in \eqref{eq_MM_section_coord} and
$C,C_\eps$
as in Lemma \ref{lemma_confronto_modello}.
Then
 \[
 \pa{\frac{1}{\rho'} - C_\eps }\frac{1}{ M^+
+  j} - C \frac{ 1+\log (M^- + j)}{(M^- +j)^2}\leq \abs{x(F^j_\eps (p))} \leq \frac{2}{j+ M^-}
 \]
 for every $p\in \Cc$,
 for $\eps$ small enough and $j\leq n_p (\eps)$.
\end{prop}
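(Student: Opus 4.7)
The upper bound $\abs{x(F^j_\eps(p))} \leq 2/(j + M^-)$ is exactly Lemma \ref{lemma_stima_x_modulo}, so I focus on the lower bound. The strategy is to combine the comparison \eqref{eq_stima_bootstrap_2} of Lemma \ref{lemma_confronto_modello} with a direct computation of $\abs{\t q_j}$ from below: by the reverse triangle inequality,
\[
\abs{x(F^j_\eps(p))} \geq \abs{\t q_j} - \abs{x(F^j_\eps(p)) - \t q_j},
\]
and the subtracted term is controlled by \eqref{eq_stima_bootstrap_2}. It then suffices to establish
$\abs{\t q_j} \geq 1/(\rho'(M^+ + j))$ up to a correction that can be absorbed either into the $C_\eps/(M^+ + j)$ term or into $C (1 + \log(M^- + j))/(M^- + j)^2$.

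To estimate $\abs{\t q_j}$, I would write $\t q_j = -\eps/\tan(\mu_j)$ with $\mu_j := \eps \t u_\eps(q) + \abs{\eps} j + \pi/2$, so that $\abs{\t q_j} = \abs{\eps}/\abs{\tan \mu_j}$. From \eqref{eq_MM_section_coord} one gets $\abs{\eps}(M^- + j) \leq \Re(\mu_j) \leq \abs{\eps}(M^+ + j)$. The expansion $u_\eps(x) + \pi/(2\eps) \to -1/x$ implicit in Lemma \ref{lemma_pr_u_eps}\eqref{lemma_pr_u_eps_1}, together with the condition $\abs{\arg \eps} = O(\abs{\eps})$ from \eqref{eq_condizioni_eps}, gives $\eps u_\eps(q) = -\pi/2 + O(\abs{\eps})$, hence $\abs{\Im \mu_j} \leq C_3 \abs{\eps}$ for some $C_3$ depending only on $\Cc$. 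For $j \leq n_p(\eps)$, Proposition \ref{prop_prima_stime_1} and the choice $K = 2\pi(\rho'' - 1)$ in \eqref{eq_def_Ktau} imply $\abs{\eps}(M^+ + j) \leq K + \abs{\eps} M^+ < 4\pi(\rho'' - 1)$ for $\eps$ small enough, so $\mu_j$ lies in a small complex neighborhood of the real segment $[0, 4\pi(\rho'' - 1)]$. By the defining inequality $\abs{4\pi(\rho'' - 1)/\tan(4\pi(\rho'' - 1))} > 1/\rho'$, the monotonicity of $\tan z/z$ on the real axis, and continuity off it, we obtain $\abs{\tan \mu_j} \leq \rho' \abs{\mu_j}$ for $\eps$ sufficiently small. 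Combining with $\abs{\mu_j}^2 \leq \abs{\eps}^2(M^+ + j)^2 + C_3^2 \abs{\eps}^2$ yields
\[
\abs{\t q_j} \geq \frac{\abs{\eps}}{\rho' \abs{\mu_j}} \geq \frac{1}{\rho'(M^+ + j)} - \frac{C_4}{(M^+ + j)(M^- + j)^2},
\]
and the last error is dominated by $C(1 + \log(M^- + j))/(M^- + j)^2$. Re-inserting the error from \eqref{eq_stima_bootstrap_2} gives the claimed lower bound.

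The main obstacle I foresee is the careful extension of the inequality $\abs{\tan z/z} \leq \rho'$ from the real segment $[0, 4\pi(\rho'' - 1)]$ to a complex neighborhood containing all the $\mu_j$: the perturbation $\Im \mu_j = O(\abs{\eps})$ is small but nonzero, and one must check that it does not enlarge $\abs{\tan \mu_j}/\abs{\mu_j}$ beyond $\rho'$. A secondary technical point is ensuring that the bound $\abs{\Im(\eps u_\eps(q))} \leq C_3 \abs{\eps}$ is uniform over $q \in \Cc$, which requires $\Cc$ to stay at positive distance from the origin of the $x$-plane (where $u_\eps$ degenerates as $\eps \to 0$); this is automatic since $\Cc \subset \t C_0$ is compact and disjoint from $\{x = 0\}$.
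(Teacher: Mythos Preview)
Your proposal is correct and follows essentially the same route as the paper. Both take the upper bound directly from Lemma~\ref{lemma_stima_x_modulo} and, for the lower bound, invoke the $\t q_j$ comparison~\eqref{eq_stima_bootstrap_2} of Lemma~\ref{lemma_confronto_modello} together with the defining inequality for $K$ to obtain $\abs{\t q_j}\geq 1/(\rho'(M^+ +j))$.

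The only difference is in how the tangent estimate is carried out. The paper arranges the points $\tfrac{\eps}{\abs\eps}u_\eps(\t q_j)$ to lie on the real axis by a specific choice of the comparison point $q$, so that the bound reduces to the purely real inequality $\abs\eps(M^+ +j)/\tan(\abs\eps(M^+ +j))>1/\rho'$, checked at the extremal index $j=n_p(\eps)$ via monotonicity of $x/\tan x$. You instead keep $q\in\Cc$ arbitrary, observe that $\abs{\Im\mu_j}=O(\abs\eps)$ from the expansion $u_\eps(x)+\pi/(2\eps)\to -1/x$, and extend the real bound $\tan x/x<\rho'$ on $[0,2K]$ to a small complex neighbourhood by continuity. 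This is legitimate: $z\mapsto\tan z/z$ is holomorphic on $\{\abs z<\pi/2\}$, and on the compact segment $[0,2K]$ its maximum $\tan(2K)/(2K)$ is \emph{strictly} below $\rho'$, so the inequality persists on a fixed neighbourhood containing all the $\mu_j$ once $\eps$ is small. The residual error $C_4/((M^+ +j)(M^- +j)^2)$ you produce is indeed absorbed by the $C(1+\log(M^- +j))/(M^- +j)^2$ term after enlarging $C$. Your secondary worry is also fine: the constants $M^\pm$ from~\eqref{eq_MM_section_coord} already encode that $x$ is bounded away from $0$ on $\Cc$.
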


\begin{proof}
The second inequality is the content of Lemma \ref{lemma_stima_x_modulo}.
Let us then prove the lower bound.
By Lemma \ref{lemma_confronto_modello}, it is enough to get the bound
\[
\frac{1}{\rho' (M^+
+  j)} \leq \abs{\t q_j}
\]
where $\t q_j := \eps \tan \pa{\eps ( \Re (\t u_\eps (p))+ \aeps j/\eps)}$
as in Lemma \ref{lemma_confronto_modello}.
Notice that we arranged the points $\frac{\eps}{\aeps}\t u_\eps (\t q_j)$
to be on the real axis.
Since we have $\Re \frac{\eps}{\aeps} \t u_\eps (q_0) < -\frac{\pi}{2\aeps} + M^+$
(and thus 
$
\Re \frac{\eps}{\aeps} u_\eps \pa{\t q_j}
\leq -\frac{\pi}{2\aeps} + M^+  + j$),
it follows that
\[
 \abs{\t q_j}
\geq \aeps  \abs{ \tan \pa{ \eps\pa{-\frac{\pi}{2\aeps} + (M^+ + j) \frac{\aeps}{\eps}}} }
= \frac{\aeps}  {{ \tan \pa{ M^+ \aeps+j\aeps} }}.
\]
We thus have to prove that, for $\eps$ sufficiently small and $j\leq n_p (\eps)$,
\[
\frac{\aeps M^+ +\aeps  j}{{\tan \pa{M^+ \aeps + j \aeps}}} > \frac{1}{\rho'}.
\]
The left hand side is decreasing in $j$, so we can evaluate it at $j= n_p (\eps)$, which is less or equal
than $ \frac{K}{(2-\rho'')\aeps}$
by Proposition \ref{prop_prima_stime_1}.
We thus need to
prove that, for $\eps$ sufficiently small,
\[
\frac{\aeps M^+ + \frac{K}{2-\rho''}}{\abs{\tan \pa{M^+ \aeps + \frac{K}{2-\rho''}}}}
> \frac{1}{\rho'}.
\]
This follows since $\aeps {M^+} + \frac{K}{2-\rho''} <2K$ for $\aeps \ll 1$
and,
by assumption, $K$ satisfies $\abs{\frac{2K}{\tan (2 K)}}> \frac{1}{\rho'}$.
This concludes the proof.
\end{proof}

We can now give the estimate for the second coordinate.

\begin{prop}\label{prop_stima_y_prima}
Let
$\Cc \subset \t C_0$
be a compact subset and $M^+$
be as in \eqref{eq_MM_section_coord}.
 There exists a positive constant
 $c_1$, depending on $\Cc$,
 such that for $p\in \Cc$ and 
$J \leq n_p (\eps)$,
  \[
  \abs{y (F^J_\eps (p))}
  \leq
c_1 
 \abs{y(p)}
  \prod_{l=M^+}^{M^+ + J-1}\pa{ 1 -  \frac{\tilde \rho}{l}}
  \]
for some $1 < \tilde \rho 
< \frac{\rho}{\rho'} \frac{1-\gamma'}{\sqrt{1+\gamma'^2}}$.
\end{prop}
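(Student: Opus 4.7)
The plan is to iterate the second coordinate of $F_\eps$ directly and to control the resulting product by inserting the bounds on $x(F^j_\eps(p))$ established in the previous proposition. From \eqref{eq_family} the second coordinate satisfies
\[
y(F^{j+1}_\eps(p)) = y(F^j_\eps(p))\bigl(1 + \rho\, x(F^j_\eps(p)) + \beta_\eps(F^j_\eps(p))\bigr),
\]
with $\beta_\eps(x,y) = O_2(x,y) + O(\eps^2)$, so that telescoping gives
\[
|y(F^J_\eps(p))| = |y(p)|\prod_{j=0}^{J-1}\bigl|1 + \rho\, x(F^j_\eps(p)) + \beta_\eps(F^j_\eps(p))\bigr|.
\]
The entire proof is thus reduced to showing that each factor is bounded by $(1 - \tilde\rho/(M^+ + j))(1 + \tilde E_j)$ with $\sum |\tilde E_j|$ uniformly bounded on~$\Cc$.

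For the per-factor bound I would apply the elementary inequality $|1+z| \leq 1 + \Re z + \tfrac12 |z|^2$, which follows from $\bigl(1 + \Re z + \tfrac12 |z|^2\bigr)^2 \geq 1 + 2\Re z + |z|^2$ by completing the square, with $z = \rho x + \beta_\eps$ evaluated at $F^j_\eps(p)$. Since $j \leq J \leq n_p(\eps)$ and $p \in \t C_\eps$, Proposition \ref{prop_prima_stime_1} ensures that $F^j_\eps(p) \in \t C_\eps$, so the cone condition yields $\Re x \leq -|x|/\sqrt{1+\gamma'^2}$ (up to a rotation of order $|\arg \eps| = O(|\eps|)$ coming from the passage from $\t C_0$ to $\t C_\eps$, which is absorbed into the $1-\gamma'$ slack), and $|\beta_\eps| = O(|x|^2) + O(\eps^2)$ thanks to $|y| \leq s|x|$ in $\t C_\eps$. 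Plugging in the lower estimate for $|x|$ from Proposition \ref{prop_stima_prima_x_completa} then yields
\[
\bigl|1 + \rho x + \beta_\eps\bigr|(F^j_\eps(p)) \leq 1 - \frac{\tilde\rho}{M^+ + j} + E_j,
\qquad E_j = O\!\left(\frac{1 + \log(M^- + j)}{(M^- + j)^2}\right) + O(\eps^2),
\]
for any $1 < \tilde\rho < \frac{\rho}{\rho'}\cdot\frac{1 - \gamma'}{\sqrt{1+\gamma'^2}}$, the non-emptiness of this range being precisely the first half of \eqref{eq_assumpt_gamma_s}.

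The final step is to factor $1 - \tilde\rho/(M^+ + j) + E_j = (1 - \tilde\rho/(M^+ + j))(1 + \tilde E_j)$, with $|\tilde E_j| \leq C|E_j|$ since the multiplier is bounded away from zero, and multiply. The product of $1 - \tilde\rho/(M^+ + j)$ gives exactly the quantity appearing in the statement. The product $\prod_j (1 + \tilde E_j)$ is controlled by $\exp\bigl(\sum_j |\tilde E_j|\bigr)$, and this sum is bounded uniformly on $\Cc$: the first summand in $E_j$ contributes a convergent series $\sum \log j/j^2$, and the second contributes at most $J \eps^2 \leq O(|\eps|) \to 0$ thanks to the upper bound $J \leq n_p(\eps) = O(1/|\eps|)$ supplied by Proposition \ref{prop_prima_stime_1}. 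This produces the claim with a constant $c_1 = c_1(\Cc)$.

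The main obstacle is arranging for the constant $\tilde\rho$ to lie in the prescribed interval. This requires carefully tracking three contributions simultaneously: the factor $1/\rho'$ coming from the leading term in the lower bound for $|x|$, the factor $1/\sqrt{1+\gamma'^2}$ coming from the cone geometry on $\t C_\eps$, and the $(1-\gamma')$ margin needed to absorb both the quadratic term $\tfrac12 \rho^2 |x|^2$ arising from the expansion of $|1 + \rho x|^2$, the $C_\eps$ correction from Proposition \ref{prop_stima_prima_x_completa}, and the rotation error coming from $\t C_\eps$ versus $\t C_0$. It is exactly assumption \eqref{eq_assumpt_gamma_s} that guarantees the resulting interval for $\tilde\rho$ lies strictly above $1$, which is crucial because otherwise the product $\prod_{l}(1 - \tilde\rho/l)$ would fail to decay at all.
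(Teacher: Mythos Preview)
Your proof is correct and follows essentially the same structure as the paper's: telescope $|y(F^J_\eps(p))|$ into a product, bound each factor via the cone geometry and the two-sided estimates on $|x(F^j_\eps(p))|$ from Proposition~\ref{prop_stima_prima_x_completa}, and absorb the quadratic errors into the constant $c_1$.

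The only noteworthy difference is the elementary inequality used for the per-factor bound. The paper uses the triangle-type estimate
\[
|1+\rho x| \leq 1 - \rho|\Re x| + \rho|\Im x| \leq 1 - \rho\,\frac{1-\gamma'}{\sqrt{1+\gamma'^2}}\,|x|,
\]
invoking $|\Im x|<\gamma'|\Re x|$ via the inclusion $\t C_\eps \subset \t C'_0$, whereas you use $|1+z|\leq 1+\Re z+\tfrac12|z|^2$ and get the sharper leading coefficient $\rho/\sqrt{1+\gamma'^2}$ at the cost of an additional $O(|x|^2)$ error that is summable anyway. Your route thus treats the $(1-\gamma')$ factor as slack for lower-order corrections (rotation of $\t C_\eps$, the $C_\eps$ term, the quadratic remainder), while the paper burns it already in the linear estimate. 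Both arrive at the stated range for $\tilde\rho$. One small stylistic point: rather than tracking the $O(|\arg\eps|)$ rotation by hand, it is cleaner, as the paper does, to simply quote the inclusion $\t C_\eps\subset\t C'_0$ established earlier, which gives the cone inequality with parameter $\gamma'$ directly.
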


Notice that $1 < \frac{\rho}{\rho'} \frac{1-\gamma'}{\sqrt{1+\gamma'^2}}$
by the assumption \eqref{eq_assumpt_gamma_s}.

\begin{proof}
We shall make use of both estimates obtained in
Proposition \ref{prop_stima_prima_x_completa}.
Since the part of orbit which we are considering is in $\t C_\eps$ (at least) up to $J-1$, 
we have
$\abs{y (F^j_\eps (p))} \leq s \abs{x(F^j_\eps (p))}$ and $\abs{x(F^j_\eps (p))} > \aeps$, for $j\leq J-1$.
So,
by
the expression of $y(F_\eps(p))$
in \eqref{eq_family},
we get
 \[
 \begin{aligned}
 \abs{y(F^{J}_\eps (p))}
 & \leq
  \abs{y(p)}
 \prod_{j=0}^{J-1}\abs{ 1 + \rho x (F^j_\eps (p)) + O(x^2 (F^j_\eps (p))}\\
 & \leq
  \abs{y(p)}
 \prod_{j=0}^{J-1} \pa{\abs{ 1 + \rho x (F^j_\eps (p))} + \t c_1 \abs{x^2 (F^j_\eps (p))}}
 \end{aligned}
 \]
 for some positive $\t c_1$.
 For $\eps$ sufficiently small, we have
$\t C_\eps \subset \t C'_0 = \t C_0 (\gamma', R, s)$
(see Proposition \ref{prop_hakim_26}). This implies 
that
 $\abs{\Im \pa{x (F^j_\eps (p))}}< \gamma' \abs{\Re \pa{x (F^j_\eps (p))}}$
 for every $j< n_p (\eps)$.
Thus
 \[
 \begin{aligned}
 \abs{1 + \rho x (F^j_\eps (p))} & \leq 1-  \rho\abs{\Re \pa{x (F^j_\eps (p))}} + \rho  \abs{\Im \pa{x (F^j_\eps (p))}}\\
 & \leq 1 - \rho (1-\gamma' ) \abs{\Re \pa{x (F^j_\eps (p))}} \\
 & \leq 1- \rho \frac{1-\gamma'}{\sqrt{1+\gamma'^2}}  \abs{x (F^j_\eps (p))}
 \end{aligned}
 \]
 and thus,  
 by the estimates on $x(F^j_\eps (p))$ in Proposition
 \ref{prop_stima_prima_x_completa}
 we deduce that
 (for $\eps$ sufficiently small)
\[
\begin{aligned}
 \abs{y(F^{J}_\eps (p))}
 & \leq 
 \abs{y(p)}
  \prod_{j=0}^{J-1}\pa{ 1 -\rho  \frac{1-\gamma'}{\sqrt{1+\gamma'^2}} \pa{ \frac{1}{\rho'}-C_\eps }\frac{1}{M^+ +j}
  + \t c'_1
  \frac{ 1+\log (M^- + j)}{(M^- +j)^2}
  }\\
 &\leq
 c_1 
 \abs{y(p)}
  \prod_{j=0}^{J-1}\pa{ 1 - \tilde \rho \frac{1}{M^+ +j}}
 \end{aligned}
 \]
where $\t \rho$ is some constant such that $1 < \t \rho < \frac{\rho}{\rho'} \frac{1-\gamma'}{\sqrt{1+\gamma'^2}}$,
 and the assertion follows.
\end{proof}

\subsection{From $n_p (\eps)$ to $n'_p(\eps)$}

Notice that
$\t D_\eps$
needs not to be $F_\eps$-invariant.
In this section  we estimate the second coordinate
for points in an orbit entering $\t D_\eps$ (and in particular
explain the constant $e^{4\pi \rho \tau}$
in the definition of $\t D_\eps$).
Our goal is prove a lower bound on $n'_p (\eps)$ (and moreover to prove that the orbit cannot
come back to $\t C_\eps$). This will in particular give an estimate for the
coordinates of the point in the orbit for $j$ up to the lower bound of $n'_p (\eps)$ (since in $\t D_\eps$
both $\abs{x}$ and $\abs{y}$ are bounded by (a constant times)
$\aeps$).

\begin{prop}\label{prop_stima_y_centro}
 Let $\Cc \subset \t C_0$ be a compact subset.
 Then, for every
 $p\in \Cc$,
  and $n_p (\eps) < j \leq n_p' (\eps)$, we have
  \[\abs{y (F^j_\eps (p))} \leq e^{4\pi\rho\tau} \abs{y (F^{n_p (\eps)}_\eps (p))} \leq e^{4\pi \rho \tau} \aeps\]
\end{prop}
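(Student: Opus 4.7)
The two inequalities in the statement are established separately.

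\emph{The multiplicative estimate.} As long as $F^i_\eps(p)\in \t D_\eps$, the second inclusion in \eqref{eq_inclusioni_Deps} gives $\abs{x(F^i_\eps(p))}\leq \tau\aeps$. Using the explicit form of the second coordinate in \eqref{eq_family},
\[
y(F^{i+1}_\eps(p)) = y(F^i_\eps(p))\bigl(1 + \rho x(F^i_\eps(p)) + \beta_\eps(F^i_\eps(p))\bigr),
\]
so iterating from $i=n_p(\eps)$ to $j-1$ and taking moduli yields
\[
\abs{y(F^j_\eps(p))} \leq \abs{y(F^{n_p(\eps)}_\eps(p))} \prod_{i=n_p(\eps)}^{j-1}\bigl(1 + \rho\tau\aeps + O(\aeps^2)\bigr).
\]
The number of factors is controlled exactly as in Proposition \ref{prop_prima_stime_1}: Lemma \ref{lemma_trasl} applies on all of $\t C_\eps \cup \t D_\eps$, so each iteration moves $\Re\pa{\tfrac{\eps}{\aeps}\t u_\eps}$ forward by at least $2-\rho''$; since the horizontal extent of $D_\eps$ in the $\t u_\eps$-chart is at most $\pi/\aeps$, I get
\[
n'_p(\eps)-n_p(\eps)\leq \frac{\pi}{(2-\rho'')\aeps}.
\]
Because $\rho''<5/4$, we have $\pi/(2-\rho'')<4\pi/3$, and taking logarithms produces
\[
\log\prod_{i=n_p(\eps)}^{j-1}\bigl(1 + \rho\tau\aeps + O(\aeps^2)\bigr)\leq \frac{\pi\rho\tau}{2-\rho''} + O(\aeps) < 4\pi\rho\tau
\]
for $\eps$ sufficiently small, which gives the first inequality.

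\emph{The entry estimate.} To show $\abs{y(F^{n_p(\eps)}_\eps(p))}\leq \aeps$, apply Proposition \ref{prop_stima_y_prima} at the maximal admissible index $J=n_p(\eps)$. A standard asymptotic comparison with the harmonic sum gives
\[
\prod_{l=M^+}^{M^+ + n_p(\eps)-1}\bigl(1 - \tilde\rho/l\bigr) \leq C\pa{\frac{M^+}{M^+ + n_p(\eps)}}^{\tilde\rho},
\]
and Proposition \ref{prop_prima_stime_1} gives $n_p(\eps)\geq \tfrac{K}{\rho''\aeps} - \tfrac{M^+}{\rho''}$, so the right hand side is bounded by $C'\aeps^{\tilde\rho}$. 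Since $\tilde\rho >1$ and $\abs{y(p)}$ is uniformly bounded on $\Cc$, Proposition \ref{prop_stima_y_prima} yields $\abs{y(F^{n_p(\eps)}_\eps(p))} = O(\aeps^{\tilde\rho}) = o(\aeps)$, hence $\leq \aeps$ for $\eps$ small.

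\emph{Main obstacle.} The delicate point is the uniform bound $n'_p(\eps)-n_p(\eps) \leq \pi/((2-\rho'')\aeps)$ on the number of iterations spent inside $\t D_\eps$, together with the check that this bound matches the exponent in the definition of $\t D_\eps$. The constants fixed in Section \ref{section_perturbed}---in particular $K=2\pi(\rho''-1)$ with $1<\rho''<5/4$ and the condition $\abs{2K/\tan 2K}>1/\rho'$---are tuned precisely so that $\pi\rho\tau/(2-\rho'')<4\pi\rho\tau$. In particular the exponent $4\pi\rho\tau$ appearing in the statement and already in \eqref{eq_inclusioni_Deps} is the smallest constant for which the $y$-radius of $\t D_\eps$ is large enough to contain the images of points entering through $\t C_\eps$; any smaller choice would break the inclusion $F_\eps(\t C_\eps)\subset \t C_\eps\cup\t D_\eps$ of Lemma \ref{lemma_invarianza_inizio}.
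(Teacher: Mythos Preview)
Your argument for the multiplicative estimate is essentially the paper's: bound $\abs{x}\le\tau\aeps$ on $\t D_\eps$, count at most $O(1/\aeps)$ factors via Lemma~\ref{lemma_trasl}, and exponentiate. The paper is slightly cruder---it bounds each factor by $1+2\rho\tau\aeps$ and the number of factors by $2\pi/\aeps$, yielding exactly $e^{4\pi\rho\tau}$---but the idea is identical.

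Your entry estimate takes a genuinely different route. The paper does \emph{not} invoke Proposition~\ref{prop_stima_y_prima}: it simply observes that at time $n_p(\eps)-1$ the point is still in $\t C_\eps$, so $\abs{y}\le s\abs{x}$, and since $\abs{x}$ there is of order $\tau\aeps$ one step before entering $\t D_\eps$, one gets $\abs{y(F^{n_p(\eps)}_\eps(p))}\le 4s\tau\aeps<\aeps$ directly from the tailored hypothesis $4s\tau<1$ in \eqref{eq_assumpt_gamma_s}. Your approach via the product asymptotics is correct and in fact yields the sharper $O(\aeps^{\tilde\rho})$, but it is heavier machinery: it reproves contraction that is already encoded in the cone condition $\abs{y}\le s\abs{x}$ defining $\t C_\eps$, and it does not explain why the constant $4s\tau<1$ was singled out.

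One correction to your ``Main obstacle'' commentary: the exponent $4\pi\rho\tau$ in the $y$-radius of $\t D_\eps$ is \emph{not} what makes Lemma~\ref{lemma_invarianza_inizio} work. That lemma only needs the first inclusion in \eqref{eq_inclusioni_Deps}, and points of $F_\eps(\t C_\eps)$ landing in $D_\eps\times\C$ already have $\abs{y}\le s\abs{x}$ small. The factor $2e^{4\pi\rho\tau}$ is chosen so that the conclusion of the present proposition keeps the orbit inside $\t D_\eps$ throughout the middle phase; it is a consistency condition between Proposition~\ref{prop_stima_y_centro} and the definition of $\t D_\eps$, not a requirement of Lemma~\ref{lemma_invarianza_inizio}.
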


\begin{proof}
Recall that $\tau=\tan\pa{-\frac{\pi}{2} + \frac{K}{2}}$ and that by the assumption \eqref{eq_assumpt_gamma_s}
we have $4 s\tau <1$.
Since
the part of
 orbit under consideration
 is contained in $\t D_\eps$
 (and thus $\abs{x(F_\eps^j (p))} \leq \tau \aeps$, by \eqref{eq_inclusioni_Deps}),
 we have
  \[
  \begin{aligned}
   \abs{y (F^j_\eps (p))} & \leq \abs{y (F_\eps^{n_p (\eps)} (p))}  \prod_{i=n_p (\eps)}^{j-1}
\pa{1 + 2 \rho \tau \aeps}\\
& \leq \abs{y (F_\eps^{n_p (\eps)} (p))}  \prod_{i=n_p(\eps)}^{\lfloor\frac{\pi - K/2}{(2 -\rho'') \aeps}\rfloor}
   \pa{1 + 2 \rho \tau \aeps}.
   \end{aligned}
  \]
 The product is bounded by $(1+2\rho\tau \aeps)^{2\pi/\aeps} \leq e^{4\pi\rho\tau}$ as $\eps\to 0$.
 Moreover, we have
 $\abs{y (F_\eps^{n_p (\eps)} (p))}
 \leq \abs{y (F_\eps^{n_p (\eps)-1} (p))} \abs{1+\rho x(F_\eps^{n_p(\eps)-1})}
 \leq 4 s \tau \aeps < \aeps$.
 This gives the assertion.
\end{proof}

We can now give the estimate on $n'_p (\eps)$.

\begin{prop}\label{prop_stima_lunghezza_centro}
Let
$\Cc \subset \t C_0$
be a compact subset and $M^-, M^+$
be as in \eqref{eq_MM_section_coord}.
Then, for every $p\in \Cc$,
\[
  \frac{\pi - K/2}{\rho'' \aeps} - \frac{M^+}{\rho''} \leq
 n'_p (\eps) \leq \frac{\pi - K/2}{(2- \rho'')\aeps} - \frac{M^-}{2-\rho''}.
 \]
 Moreover, we have
 $\abs{y(F^j_\eps (p)) } \leq e^{4\pi\rho\tau} \aeps$
 for $n_p (\eps) \leq j <  n'_p (\eps)$
 and
 \[\Re \pa{ \frac{\eps}{\aeps}\t u_\eps \pa{F_{\eps}^{n'_p (\eps)}}} \geq \frac{\pi -K}{2\aeps}.\]
 In particular, once entered
 in $\t D_\eps$, the orbit cannot come back to $\t C_\eps$.
\end{prop}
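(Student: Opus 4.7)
The plan is to run an argument parallel to that of Proposition \ref{prop_prima_stime_1}, but with the threshold now placed at the \emph{exit} boundary of $\t D_\eps$ (where $\Re\pa{\frac{\eps}{\aeps}\t u_\eps} = \frac{\pi}{2\aeps} - \frac{K}{2\aeps}$) rather than at the entry boundary (where $\Re\pa{\frac{\eps}{\aeps}\t u_\eps} = -\frac{\pi}{2\aeps} + \frac{K}{\aeps}$). By the very definition of $n'_p(\eps)$, every point $F^j_\eps(p)$ with $0 \leq j < n'_p(\eps)$ lies in $\t C_\eps \cup \t D_\eps$, and so Lemma \ref{lemma_trasl} is applicable at each such step. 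Iterating it exactly as in the proof of Proposition \ref{prop_prima_stime_1} yields
\[
-\frac{\pi}{2\aeps} + M^- + (2-\rho'')j \;\leq\; \Re\pa{\frac{\eps}{\aeps}\t u_\eps(F^j_\eps(p))} \;\leq\; -\frac{\pi}{2\aeps} + M^+ + \rho'' j
\]
for every $0 \leq j \leq n'_p(\eps)$, extending the earlier bound \eqref{eq_stima_reu} well past $n_p(\eps)$.

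The $y$-estimate $\abs{y(F^j_\eps(p))} \leq e^{4\pi\rho\tau}\aeps$ claimed in the \emph{moreover} part is precisely the content of Proposition \ref{prop_stima_y_centro}. Since this is strictly smaller than the $y$-radius $2 e^{4\pi\rho\tau}\aeps$ of $\t D_\eps$ given in \eqref{eq_inclusioni_Deps}, the orbit cannot possibly exit $\t D_\eps$ through the $y$-direction. The exit must therefore be dictated by the first coordinate leaving $D_\eps$, that is, by $\Re\pa{\frac{\eps}{\aeps}\t u_\eps(F^j_\eps(p))}$ crossing the upper threshold $\frac{\pi}{2\aeps} - \frac{K}{2\aeps}$. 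Solving the displayed two-sided inequality for the smallest $j$ at which this can occur gives both the upper and the lower bound on $n'_p(\eps)$ in the statement, and the estimate $\Re\pa{\frac{\eps}{\aeps}\t u_\eps(F^{n'_p(\eps)}_\eps(p))} \geq \frac{\pi - K}{2\aeps}$ follows at once from the left inequality at $j = n'_p(\eps)$ together with the definition of $n'_p(\eps)$.

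To rule out a return to $\t C_\eps$, note that Lemma \ref{lemma_trasl} rests only on local Taylor expansions of $F_\eps$ valid throughout $U$, so the almost-translation estimate persists on a slight outer layer of $\t C_\eps \cup \t D_\eps$. Consequently the real part of $\frac{\eps}{\aeps}\t u_\eps$ along the orbit continues to grow by at least $2-\rho''$ per step after time $n'_p(\eps)$, whereas $\t C_\eps$ is contained in the half-strip $\set{\Re(\frac{\eps}{\aeps}\t u_\eps) \leq -\frac{\pi}{2\aeps} + \frac{K}{\aeps}}$. A return would force the real part to decrease from $\geq \frac{\pi - K}{2\aeps}$ down below $-\frac{\pi}{2\aeps} + \frac{K}{\aeps}$, which is impossible under the monotonicity just established. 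The main routine difficulty here is purely combinatorial bookkeeping — keeping careful track of which of the constants $M^\pm$, $K$, $\rho''$ enters each inequality — but no new estimate beyond Lemma \ref{lemma_trasl} and Proposition \ref{prop_stima_y_centro} is required.
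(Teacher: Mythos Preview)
Your proof is correct and follows the same approach as the paper: Proposition \ref{prop_stima_y_centro} controls the $y$-coordinate (so the exit from $\t D_\eps$ is governed by $x$ alone), and the two-sided estimate \eqref{eq_stima_reu} from Lemma \ref{lemma_trasl} then pins down $n'_p(\eps)$ and the exit location. Your final paragraph slightly over-reads the clause ``cannot come back to $\t C_\eps$'' as concerning times after $n'_p(\eps)$; in the paper it simply records that the exit at time $n'_p(\eps)$ is through the right boundary of $D_\eps$, which already follows from the monotonicity of $\Re\pa{\frac{\eps}{\aeps}\t u_\eps}$ on $\t C_\eps\cup\t D_\eps$ without any ``outer layer'' extension of Lemma \ref{lemma_trasl}.
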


\begin{proof}
By Proposition \ref{prop_stima_y_centro},
the modulus of the
second coordinate of the points of the orbit is bounded by
$e^{4\rho\pi\tau} \aeps$ for $n_p (\eps) < j \leq  n'_p (\eps)$.
Since for $j\leq n_p (\eps)$ it is bounded by $s \abs{x(F^j_\eps (p))}$, the assertion follows from Equation \eqref{eq_stima_reu}.
\end{proof}

\subsection{After $n'_p (\eps)$}

In order to study the behaviour of $F_\eps$ after $\t D_\eps$, we shall make use
of the family $H_\eps$
introduced in \eqref{eq_family_he}.
The following proposition is an immediate consequence of the analogous results for $F_\eps$ (first assertion of
Lemma \ref{lemma_confronto_modello}).
We denote by
$n_p^H (\eps)$
the entry time for $H$ (see Definition \ref{defi_entering}).

\begin{lemma}\label{lemma_confronto_fh}
Let
$\Cc \subset \t C_0$
be a compact subset and $M^-$
be as in \eqref{eq_MM_section_coord}.
Let $p,q$ be contained in some compact subset $\Cc \subset \t C_0$.
Then, for $\eps$ sufficiently small,
\[
\abs{ x(F^j_\eps (p))- x(H^j_\eps (q))}< C \frac{ 1+\log (M^- + j)}{(M^- +j)^2}
\]
for every $0\leq j \leq \min (n_p (\eps), n_q^H (\eps))$, for some positive constant $C$.
\end{lemma}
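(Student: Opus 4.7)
The proof will proceed by triangle inequality, using the reference sequence $q_j := \eps\tan(\eps(\t u_\eps(q)+j))$ from Lemma \ref{lemma_confronto_modello} as a common model trajectory for both $F^j_\eps(p)$ and $H^j_\eps(q)$. The strategy rests on the observation (already made in the excerpt just before \eqref{eq_family_he}) that $H_\eps$ has exactly the same structural form as $F_\eps$, the only difference being a sign change in the coefficient $q$ of the cubic term in the first coordinate. In particular, $H_0$ and $F_0$ share the same non-degenerate characteristic direction $[1:0]$ with the same director $\rho-1$, so Proposition \ref{prop_hakim_26} and all the subsequent estimates from Section \ref{section_fasi} apply to $H_\eps$ on $\t C_0$ \emph{mutatis mutandis}.

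First I will verify that the analog of (the first assertion of) Lemma \ref{lemma_confronto_modello} holds for $H_\eps$. The computation of Lemma \ref{lemma_trasl} carried out for $H_\eps$ gives
\[
\t u_\eps(H_\eps(x,y))-\t u_\eps(x,y) = 1 - qx + ry + O_2(x,y)+O(\eps^2),
\]
i.e., the same expression as for $F_\eps$ with $q$ replaced by $-q$. Summing this telescopically as in the proof of \eqref{eq_stima_carta_u_num}, and using Lemma \ref{lemma_stima_x_modulo} applied to $H_\eps$ to control the $\abs{x(H^i_\eps(q))}$ by $2/(M^-+i)$, we get
\[
\abs{\t u_\eps(H^j_\eps(q))-\t u_\eps(q)-j}\leq C_1(1+\log(M^-+j))
\]
for every $0\leq j\leq n_q^H(\eps)$. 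Transferring this bound back to the dynamical plane via the derivative estimate for the inverse of $u_\eps$ on the strip $\{-\pi/(2\aeps)+L\leq \Re(\eps w/\aeps)<-\pi/(4\aeps)\}$ (which depends only on the geometry of this strip and not on the sign of $q$) yields
\[
\abs{x(H^j_\eps(q))-q_j}<C_2\,\frac{1+\log(M^-+j)}{(M^-+j)^2},\qquad 0\leq j\leq n_q^H(\eps).
\]

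Second, I apply Lemma \ref{lemma_confronto_modello} directly to $F_\eps$ with the given pair $p,q\in\Cc$ to obtain
\[
\abs{x(F^j_\eps(p))-q_j}<C_3\,\frac{1+\log(M^-+j)}{(M^-+j)^2},\qquad 0\leq j\leq n_p(\eps).
\]
The triangle inequality then gives the desired bound with $C:=C_2+C_3$, valid on the common range $0\leq j\leq \min(n_p(\eps),n_q^H(\eps))$.

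The proof is essentially bookkeeping; there is no real obstacle beyond verifying that none of the ingredients of Lemma \ref{lemma_confronto_modello} (the translation estimate of Lemma \ref{lemma_trasl}, the upper bound of Lemma \ref{lemma_stima_x_modulo}, the derivative bound for $u_\eps^{-1}$) is affected by the sign flip $q\mapsto -q$ or, more generally, by passing from $F_\eps$ to $H_\eps$. The only mild subtlety is that $M^-$ must be taken uniform over $\Cc$ for both families simultaneously, which is immediate since $\Cc\subset \t C_0$ is compact and the constants $M^\pm(\Cc)$ in \eqref{eq_MM_section_coord} come from the common coordinate $u_\eps$.
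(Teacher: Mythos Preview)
Your proposal is correct and follows exactly the approach the paper indicates: the paper's proof is the single sentence ``an immediate consequence of the analogous results for $F_\eps$ (first assertion of Lemma \ref{lemma_confronto_modello})'', and you have simply unpacked this by applying \eqref{eq_stima_bootstrap} to both $F_\eps$ and $H_\eps$ with the common model sequence $q_j$ and invoking the triangle inequality.
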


We will get the estimates on the second coordinate
in this part of the orbit directly in Section \ref{section_convergence}, when
proving Theorem \ref{teo_lav_2d}, by applying Proposition \ref{prop_stima_y_prima} to both
$F_\eps$ and $H_\eps$.

\section{A preliminary convergence:  proof of Theorem \ref{lemma_conv_fai}}\label{section_prelim_convergence}
In this section we prove Theorem \ref{lemma_conv_fai}. Namely,
given 
a sequence $(\eps_\nu,m_\nu)$
of bounded type (see Definition \ref{defi_bounded}),
we prove that
$\t \fai_{\eps_\nu, m_\nu}\to \t \fai$
and $\t \fao_{\eps_\nu, m_\nu}\to \t \fao$,
locally uniformly on
$\t C_0$ and $- \t C_0$,
where $\t \fai$ and $\t \fao$
are the Fatou coordinates
for $F_0$
given by Lemma \ref{lemma_prima_coord}.
Recall that by assumption these two sets are contained in a neighbourhood $U$
of the origin where $F_\eps$ is invertible, for $\eps$ sufficiently small, ans thus in particular
where $\t \fao$ is well defined.
We shall need the following elementary Lemma.

\begin{lemma}\label{lemma_serie}
 Let $a
 \in \R$,
 be
 strictly greater than 1. Then, for every $j_0\geq l_0\geq 1$ such that
 $0<1-\frac{a}{l}
 <1$ for every $l\geq l_0$,
 the series
 \[\sum_{j=j_0}^\infty \prod_{l=l_0}^{j} \pa{1- \frac{a}{l}
 }\]
 converges.
\end{lemma}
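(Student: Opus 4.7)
The key observation is that the product $\prod_{l=l_0}^{j}\left(1 - \frac{a}{l}\right)$ has a precise power-law decay rate in $j$, governed by the exponent $a$. My plan is to extract this rate and then apply a standard comparison.

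The cleanest route is to pass to logarithms. Since all factors are in $(0,1)$ for $l \geq l_0$, write
\[
\log \prod_{l=l_0}^{j}\left(1 - \frac{a}{l}\right) = \sum_{l=l_0}^{j}\log\left(1 - \frac{a}{l}\right).
\]
Using the Taylor expansion $\log(1 - a/l) = -a/l + O(1/l^2)$, together with $\sum_{l=l_0}^{j} 1/l = \log j + O(1)$ and the convergence of $\sum 1/l^2$, I obtain
\[
\sum_{l=l_0}^{j}\log\left(1 - \frac{a}{l}\right) = -a\log j + C + o(1)
\]
for some constant $C = C(l_0,a)$. Exponentiating gives the sharp asymptotic
\[
\prod_{l=l_0}^{j}\left(1 - \frac{a}{l}\right) \sim \frac{e^C}{j^{a}} \quad \text{as } j \to \infty.
\]

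The conclusion then follows from the comparison principle: since $a > 1$ by hypothesis, the series $\sum_{j \geq j_0} j^{-a}$ converges, hence so does the series in the statement.

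I do not anticipate any genuine obstacle here, as the argument is essentially the elementary fact underlying Raabe's convergence test (indeed, a direct application of Raabe to the series in the statement, whose consecutive ratio is exactly $1 - a/(j+1)$, would give the same conclusion since $j(1-(1-a/(j+1))) \to a > 1$). The only mild subtlety is ensuring that all the factors are positive so the logarithm is well-defined and the asymptotic is valid — but this is guaranteed by the hypothesis that $0 < 1 - a/l < 1$ for all $l \geq l_0$. Note that the assumption $a > 1$ (rather than merely $a > 0$) is used \emph{only} at the very last step to get summability of $j^{-a}$; this is the place where Hakim's condition $\rho > 1$ from the paper's setup is decisive.
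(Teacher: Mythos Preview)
Your argument is correct and follows the same overall strategy as the paper: establish the asymptotic $\prod_{l=l_0}^{j}(1-a/l)\sim c\,j^{-a}$ and then compare with the $p$-series $\sum j^{-a}$, which converges since $a>1$. The only difference is in how the asymptotic is derived: the paper writes the product explicitly as $c\,\Gamma(j+1-a)/\Gamma(j+1)$ (citing Weickert) and reads off $j^{-a}$ from the Stirling-type behaviour of the Gamma function, whereas you take logarithms and use $\log(1-a/l)=-a/l+O(l^{-2})$ together with the harmonic-sum asymptotic. Your route is slightly more self-contained (no Gamma identities needed), while the paper's is marginally quicker if one already knows the Gamma representation; substantively the two are equivalent, and your parenthetical remark about Raabe's test is also a valid shortcut.
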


Notice that the Lemma is false when $a=1$, since the series reduces to an harmonic one.
In our applications $a$ will essentially be $\rho$, which we assume by hyphotesis
to be
greater than 1.

\begin{proof}
As in \cite[Lemma 4]{weickert98}, let us set $P_j := \prod^j_{l=l_0} (1- \frac{a}{l}
)$
and
notice that the $P_j$'s admit an explicit expression as
\[
P_j =
c\, \frac{\Gamma (j+1 - a)}{\Gamma (j+1)}
\]
for some constant $c = c (l_0)$, where $\Gamma$ is the Euler Gamma function. Since $\Gamma (j + 1-a) \sim \frac{1}{j^{a}} j!$
as $j\to \infty$, we deduce that
$
P_j
\sim
c \frac{1}{j^{a}} 
$,
and so $\sum_j P_j$ converges.
\end{proof}

We can now prove
Theorem \ref{lemma_conv_fai}.
The proof
follows the main ideas of the one of
\cite[Theorem 2.6]{bsu}.
  The major issue  (and the main difference with respect to \cite{bsu})
  will be
  to
  take into account 
  the errors due the $O(y)$-terms in the estimates.
  This will be done by means of the following Lemma, which relies on Propositions \ref{prop_stima_y_prima} and
  \ref{prop_stima_y_centro}.
  
  \begin{lemma}\label{lemma_conv_y}
 Let $p\in \tilde C_0$ and $n_p (\eps)$
 be as in \eqref{eq_def_entering}. Let $\b n (\eps)$ be such that $n_p (\eps) \leq \b n (\eps) \leq \frac{3\pi}{5\aeps}$.
  Then the following hold:
  \begin{enumerate}
  \item\label{item1_lemma_conv_y} the function $\eps \mapsto \sum_{j=1}^{\b n (\eps)}\pa{ \abs{y\pa{ F^j_\eps (p)}} + \abs{y\pa{ F_0^j (p)}}}$
   is bounded, locally uniformly on $p$, for
$\eps$
sufficiently small;
   \item\label{item2_lemma_conv_y}
   $\lim_{\eps \to 0} \sum_{j=n_p (\eps) + 1}^{\b n (\eps)} \abs{y (F^j_\eps (p))} =0$, locally uniformly on $p$.
   \end{enumerate}   
   \end{lemma}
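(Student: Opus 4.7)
The plan is to split the sum for $F_\eps$ at the entry time $n_p(\eps)$ and apply Propositions~\ref{prop_stima_y_prima} and~\ref{prop_stima_y_centro} to the two resulting pieces. First, I would verify that the hypothesis $\b n(\eps) \leq 3\pi/(5|\eps|)$ is compatible with the orbit remaining in $\t C_\eps \cup \t D_\eps$ up to step $\b n(\eps)$. Indeed, since $K \leq \pi/4$ and $\rho'' < 5/4$, Proposition~\ref{prop_stima_lunghezza_centro} yields
\[
n'_p(\eps) \geq \frac{\pi - K/2}{\rho''|\eps|} + O(1) \geq \frac{7\pi}{10|\eps|} + O(1) > \frac{6\pi}{10|\eps|} = \frac{3\pi}{5|\eps|} \geq \b n(\eps)
\]
for $\eps$ small enough. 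All constants depend only on $\Cc$ through $M^{\pm}$, giving local uniformity in $p$.

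For part \eqref{item1_lemma_conv_y}, on the range $j \leq n_p(\eps)$, Proposition~\ref{prop_stima_y_prima} bounds the partial sum for $F_\eps$ by
\[
c_1 |y(p)| \sum_{j=1}^{\infty} \prod_{l=M^+}^{M^+ + j - 1}\pa{1 - \frac{\tilde\rho}{l}},
\]
which converges by Lemma~\ref{lemma_serie} applied with $a = \tilde\rho > 1$. On the remaining range $n_p(\eps) < j \leq \b n(\eps)$, the orbit is in $\t D_\eps$, so Proposition~\ref{prop_stima_y_centro} gives $|y(F^j_\eps(p))| \leq e^{4\pi\rho\tau}|\eps|$, and there are at most $3\pi/(5|\eps|)$ terms, so this part contributes at most $(3\pi/5)e^{4\pi\rho\tau}$. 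For the $F_0$ contribution, Proposition~\ref{prop_hakim_26} gives $|y(F_0^j(p))| \leq |y(p)||x(p)|^{-\alpha-1} |x(F_0^j(p))|^{\alpha+1} \leq C/j^{\alpha+1}$ with $\alpha > 0$, so $\sum_j |y(F_0^j(p))|$ converges absolutely and uniformly on $\Cc$.

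For part \eqref{item2_lemma_conv_y}, the crucial observation is that Proposition~\ref{prop_stima_y_prima} yields a much sharper bound on $|y(F^{n_p(\eps)}_\eps(p))|$ than simply $|\eps|$: using the asymptotic $\prod_{l=M^+}^{M^+ + J - 1}(1 - \tilde\rho/l) \sim C/J^{\tilde\rho}$ established in the proof of Lemma~\ref{lemma_serie}, together with the lower bound $n_p(\eps) \geq K/(\rho''|\eps|) + O(1)$ from Proposition~\ref{prop_prima_stime_1}, I obtain
\[
|y(F^{n_p(\eps)}_\eps(p))| \leq C'|y(p)||\eps|^{\tilde\rho}.
\]
Applying Proposition~\ref{prop_stima_y_centro} and multiplying by the number of iterations (at most $3\pi/(5|\eps|)$) then gives
\[
\sum_{j=n_p(\eps)+1}^{\b n(\eps)} |y(F^j_\eps(p))| \leq \frac{3\pi}{5|\eps|} \cdot e^{4\pi\rho\tau} C'|y(p)||\eps|^{\tilde\rho} = O(|\eps|^{\tilde\rho - 1}),
\]
which tends to $0$ since $\tilde\rho > 1$. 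The main obstacle is the careful matching of constants: the bound $\b n(\eps) \leq 3\pi/(5|\eps|)$ in the hypothesis must be tight enough (with respect to the definition of $K$ and $\rho''$) to guarantee the orbit has not yet exited $\t D_\eps$, and conversely the control $\tilde\rho > 1$ is exactly what makes both the series in part (1) converge and the $O(|\eps|^{\tilde\rho-1})$ bound in part (2) vanish.
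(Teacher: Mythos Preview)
Your proposal is correct and follows essentially the same approach as the paper: split at $n_p(\eps)$, apply Proposition~\ref{prop_stima_y_prima} before and Proposition~\ref{prop_stima_y_centro} after, and use the asymptotic $\prod_{l}(1-\tilde\rho/l)\sim C/J^{\tilde\rho}$ from Lemma~\ref{lemma_serie} together with the lower bound on $n_p(\eps)$ to conclude. The only cosmetic difference is that the paper handles both parts in a single estimate---it uses the sharper bound $|y(F_\eps^{n_p(\eps)}(p))|\lesssim |\eps|^{\tilde\rho}$ already in part~(1), so that the tail term is seen at once to go to~$0$, giving~(2) as a byproduct---whereas you first use the cruder bound $e^{4\pi\rho\tau}|\eps|$ to get boundedness and then redo the estimate more carefully for~(2).
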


Notice that, by Proposition \ref{prop_stima_lunghezza_centro},
$n'_p (\eps) \geq \frac{\pi- K/2}{\rho'' \aeps} - \frac{M^+}{\rho''}
\geq \frac{7\pi}{8} \frac {5}{4\aeps}  - \frac{M^+}{\rho''}  \geq \frac{3\pi}{5\aeps}$
for $\eps$
sufficiently small.
So, in particular, the orbit up to time $\b n (\eps)$
is contained in $\t C_\eps \cup \t D_\eps$.
On the other hand, we have $n_p (\eps) + \frac{M^-}{2-\rho''} \leq \frac{K}{(2-\rho'')\aeps} 
\leq \frac{\pi/4}{(2-5/4)\aeps} - \frac{M^-}{2-\rho''} \leq \frac{\pi}{3\aeps}$.
So, in particular, the assumption of Lemma
\ref{lemma_conv_y}
is satisfied when $(\eps_\nu, \b n (\eps_\nu))$
is of bounded type.

\begin{proof}
We start with the first point. The convergence of the second part of the series is immediate from
Proposition \ref{prop_hakim_26}, by the harmonic
behaviour of $x(F^j_0 (p))$ and the estimate \eqref{eq_xy_hakim}.
Let us thus consider the first part.
Here we split this series in a first part, with the indices up
to $n_p (\eps)$
and in the remaining part
starting from $n_p (\eps) +1$.
The sum is thus given by
\[
\sum_{j=1}^{n_p (\eps)} \abs{y\pa{ F^j_\eps (p)}} + \sum_{j=n_p (\eps) + 1}^{\b n (\eps)} \abs{y\pa{ F^j_\eps (p)}}
\]
and, by Propositions \ref{prop_stima_y_prima} and \ref{prop_stima_y_centro},
this is bounded by (a constant times) 
\[
\sum_{j=1}^{n_p (\eps)}
 \prod_{l=M^+}^{M^+ + j-1} \pa{1 - \frac{\t \rho}{l}}
+
\pa{\prod_{j=M^+}^{n_p (\eps) -1+ M^+ } \pa{1- \frac{\t \rho}{j}}}
\cdot \sum_{j=n_p (\eps)}^{\b n (\eps)} e^{4\pi\rho\tau}
\]
where $M^+$ is as in \eqref{eq_MM_section_coord}
and $\t \rho$ is (as in Proposition \ref{prop_stima_y_prima}) a constant greater than 1.
By the lower estimates on $n_p (\eps)$ in Proposition \ref{prop_prima_stime_1}
and the asymptotic behaviour proved
in Lemma \ref{lemma_serie}, the last expression is bounded by
\[
\sum_{j=1}^{\infty}  \prod_{l=M^+}^{j-1+M^+} \pa{1 - \frac{\t \rho}{l}}
+
\frac{3	\pi}{5\aeps} \cdot e^{4\pi\rho\tau} \cdot
\pa{\frac{1}{\frac{K}{\rho''\aeps} - \frac{M^+}{\rho''}-1+M^+}}^{\t \rho}.
\]
The first term
is bounded,
again by Lemma \ref{lemma_serie}, and the second one (which, up to a constant, is in particular
a majorant for the sum in the second point in the statement) 
goes to zero  as $\eps \to 0$ (since $\t \rho >1$).
This proves both statements.
\end{proof}

 \begin{proof}[Proof of Theorem \ref{lemma_conv_fai}]
First of all,
recall that
by
Lemma \ref{lemma_prima_coord}
the sequence
$\t \fai_{0,m_\nu}= \tilde w^\iota_0 + \sum_{j=0}^{m_\nu-1} A_0 (F_0^j (p)) $
converges to 
a (1-dimensional) Fatou coordinate $\t \fai$
(for this we just need that $m_\nu \to \infty$).
It is then enough to
show that the difference $\t \fai_{\eps_\nu,m_\nu}- \t \fai_{0,m_\nu}$
goes to zero as $\nu\to\infty$.
Here we shall make use of the hypothesis that the sequence $(\eps_\nu, m_\nu)$ is of bounded type.
The difference is equal to
\[
\t \fai_{\eps_\nu,m_\nu} (p) - \t \fai_{0,m_\nu} (p) = \tilde w^\iota_{\eps_\nu} (p) - \tilde w^\iota_0 (p)+ \sum_{j=0}^{m_\nu-1} \pa{A_{\eps_\nu} (F_{\eps_\nu}^j (p))-A_0 (F^j_0 (p))}
\]
and we see that the first difference goes to zero as $\nu\to \infty$.
We thus only have to estimate the second part, whose modulus
is bounded by
\[
\begin{aligned}
\sum_I + \sum_{II} := 
&
\sum_{j=0}^{m_\nu-1}  \abs{A_0 (F_{\eps_\nu}^j (p))-A_0 (F^j_0 (p))} \\
&
+
 \sum_{j=0}^{m_\nu-1} \abs{A_{\eps_\nu} (F_{\eps_\nu}^j (p))-A_0 (F^j_{\eps_\nu} (p))}.
\end{aligned}
\]
Let us consider the first sum. First of all, we prove that the majorant
$\sum_{j=1}^{m_\nu-1} \pa{  \abs{A_0 (F_{\eps_\nu}^j (p))} + \abs{A_0 (F^j_0 (p))}  }$ converges.
This follows from the fact that $A_0 (p) = O(x^2,y)$
by Proposition \ref{prop_error_A}, the estimates on
$\abs{x (F^j_{0} (p))}$
and
$\abs{x (F^j_{\eps_\nu} (p))}$
in
Propositions
\ref{prop_hakim_26}
and
\ref{prop_stima_prima_x_completa}
and from Lemma \ref{lemma_conv_y} \eqref{item1_lemma_conv_y}.
Indeed, with $M^+$
as in \eqref{eq_MM_section_coord}, we have (for some positive constant $K_0$),
\[
\begin{aligned}
\sum_I & \leq
\sum_{j=1}^{m_\nu-1} \pa{  \abs{A_0 (F_{\eps_\nu}^j (p))} + \abs{A_0 (F^j_0 (p))} } \\
& \leq K_0 \sum_{j=1}^{m_\nu-1} \pa{\abs{x (F^j_{\eps_\nu} (p))}^2 + \abs{x (F_0^j (p))}^2} + K_0 \sum_{j=1}^{m_\nu -1} 
 \pa{\abs{y (F^j_{\eps_\nu} (p))} + \abs{y (F_0^j (p))}}\\
 & \leq K_0 \sum_{j=1}^{m_\nu -1}
  \pa{
 \frac{8}{(j+M^+)^2}
  +\abs{\eps_\nu}^2
  }
 + K_0 \sum_{j=1}^{m_\nu-1} \pa{\abs{y (F^j_{\eps_\nu}(p))} + \abs{y (F_0^j (p))}} \leq B
\end{aligned}
 \]
 where in the last passage we used the assumption that the sequence $(\eps_\nu, m_\nu)$
 is of bounded type to estimate the sum of the $\abs{\eps_\nu}^2$'s and
 in order to apply Lemma \ref{lemma_conv_y} \eqref{item1_lemma_conv_y}
 for the second sum.
 
 We now prove that $\sum_I$ goes to zero, as $\nu \to \infty$.
 Given any small $\eta$, we look for a sufficiently large $J$ such that the sum
\[
\sum_{j=J}^{m_\nu -1} \abs{A_0 (F_{\eps_\nu}^j (p))-A_0 (F^j_0 (p))}
\]
is less than $\eta$ for $\abs{\eps_\nu}$ smaller than some $\eps_0$.
The convergence to 0 of $\sum_I$ will then follow from the fact
that
$A_0 (F_{\eps_\nu}^j (p))- A_0 (F_0^j(p))\to 0$ as $\nu \to \infty$, for every fixed $j$. 
As above, this sum is bounded by
\begin{equation}\label{eq_split_3}
\sum_{j=J}^{m_\nu -1}  \pa{\frac{8}{(j+M^+)^2}+\abs{\eps_\nu}^2} + \sum_{j=J}^{m_\nu -1} \abs{y (F^j_{\eps_\nu} (p))}
+\sum_{j=J}^{m_\nu -1} \abs{y (F_0^j (p))}.
\end{equation}
For $J$ sufficiently large, the first sum is smaller than $\eta/3$ (uniformly in $\eps$), since $(\eps_\nu, m_\nu)$
is of bounded type.
The same is true for the third one, by the harmonic behaviour of $x(F_0^j (p))$ and the estimate \eqref{eq_xy_hakim}.
We are thus left with the second sum of \eqref{eq_split_3}.
We split it as in Lemma \ref{lemma_conv_y}:
\begin{equation}\label{eq_teo_36}
 \sum_{j=J}^{m_\nu -1} \abs{y (F^j_{\eps_\nu} (p))}
 \leq
 \sum_{j=J}^{n_p (\eps_\nu)} \abs{y (F^j_{\eps_\nu} (p))}
+ \sum_{j=n_p (\eps_\nu) +1}^{m_\nu -1} \abs{y (F^j_{\eps_\nu}(p))}.
\end{equation}
Lemma \ref{lemma_conv_y} \eqref{item2_lemma_conv_y}
implies that the second sum of the right hand side
goes to zero as
$\eps_\nu\to 0$.
We are thus left with the first sum in the right hand side of
\eqref{eq_teo_36}.
We estimate it
by applying twice Proposition \ref{prop_stima_y_prima}
and Lemma \ref{lemma_serie}:
\[
\begin{aligned}
\sum_{j=J}^{n_p (\eps_\nu)} \abs{y (F^j_{\eps_\nu} (p))}
& \leq
c_1 \sum_{j=J}^{n_p (\eps_\nu)} \abs{y(F^J_{\eps_\nu} (p))}
\prod_{l=J+M^+}^{j-1+M^+} \pa{1-\frac{\t \rho}{l}
}\\
&
\leq
c_1 \abs{y(F^J_{\eps_\nu} (p))}
\sum_{j=J}^{\infty} 
\prod_{l=J+M^+}^{j-1+M^+} \pa{1-\frac{\t \rho}{l}
}
\\
& \leq
C_1 \abs{y(F^J_{\eps_\nu} (p))}\\
& \leq
C_2 \abs{y(p)}
\prod_{l=M^+}^{J-1+M^+} \pa{1-\frac{\t \rho}{l} 
}.
\end{aligned}\]
We can then take $J$ large enough (and independent from $\eps$) so that
the last term is smaller than
$\frac{\eta}{6}$.
Notice in particular the independence of $J$
from $\eps$ (for $\eps$ sufficiently small).

So,
until now we have proved that $\sum_I$ goes to zero as $\nu\to \infty$.
It is immediate to check that the same holds for
$\sum_{II}$.
Indeed,
\[
\sum_{II}\leq \sum_{j=0}^{m_\nu -1}
\abs{
A_{\eps_\nu} (F^j_{\eps_\nu}(p)) - A_0 (F^j_{\eps_\nu}(p))
}
\leq
\sum_{j=0}^{m_\nu -1}
K_1 \abs{\eps_\nu}^2
\]
for some positive constant $K_1$.
The assertion then follows since $(\eps_\nu, m_\nu)$
is of bounded type.
\end{proof}

\section{The convergence to the Lavaurs map}\label{section_convergence}

In this section we prove Theorem \ref{teo_lav_2d}.
We shall exploit the 1-dimensional Theorem \ref{teo_lavaurs_convergence}, i.e., the convergence of the restriction of
$F_{\eps_\nu}^{n_\nu}$ on $C_0 = \t C_0 \cap \set{y=0}$
to the 1-dimensional Lavaurs map $L_\alpha$.

\begin{lemma}\label{lemma_conv_orizzontale}
Let $p_0\in \t C_0 \cap \set{y=0}$ and $(\eps_\nu, n_\nu)$
 an $\alpha$-sequence. Assume that $q_0 :=L_\alpha (p_0)$
 belongs to $- \t C_0 \cap \set{y=0}$.
Then for every $\delta$ there exists $\eta$ such that
(after possibly shrinking $\t C_0$)
\[
{\t \fao}
\pa{
-\t C_0
\cap
F_{\eps_\nu}^{n_\nu}
\pa{
\t C_0
\cap
\pa{\t \fai}^{-1}
\pa{
\D (
\t \fai (p_0),
\eta
)
}
}
}
\subset
 \D (\t \fao (q_0), \delta)
\]
for every $\nu$
sufficiently large.
\end{lemma}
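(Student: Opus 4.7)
The plan is to factor the long iteration $F^{n_\nu}_{\varepsilon_\nu}$ into two halves of comparable length and apply the incoming/outgoing almost Fatou coordinate convergences (Theorem \ref{lemma_conv_fai} and Corollary \ref{cor_conv_fao}) to each half separately. Set $m_\nu := \lfloor n_\nu/2 \rfloor$ and $m'_\nu := n_\nu - m_\nu$. Since $(\varepsilon_\nu, n_\nu)$ is an $\alpha$-sequence, both $(\varepsilon_\nu, m_\nu)$ and $(\varepsilon_\nu, m'_\nu)$ are of bounded type in the sense of Definition \ref{defi_bounded}.

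For a point $p \in \tilde{C}_0$ in the set under consideration, we have by hypothesis $F^{n_\nu}_{\varepsilon_\nu}(p) \in -\tilde{C}_0$. Theorem \ref{lemma_conv_fai} applied at $p$ yields
\[
\tilde{w}^\iota_{\varepsilon_\nu}\!\bigl(F^{m_\nu}_{\varepsilon_\nu}(p)\bigr) - m_\nu \longrightarrow \tilde{\varphi}^\iota(p),
\]
while Corollary \ref{cor_conv_fao} applied at $F^{n_\nu}_{\varepsilon_\nu}(p)$ with $m'_\nu$ backward iterations yields
\[
\tilde{w}^o_{\varepsilon_\nu}\!\bigl(F^{m_\nu}_{\varepsilon_\nu}(p)\bigr) + m'_\nu \longrightarrow \tilde{\varphi}^o\!\bigl(F^{n_\nu}_{\varepsilon_\nu}(p)\bigr).
\]
Subtracting, the evaluations at the intermediate point $F^{m_\nu}_{\varepsilon_\nu}(p)$ combine via the identity $\tilde{w}^o_\varepsilon - \tilde{w}^\iota_\varepsilon \equiv -\pi/\varepsilon$, giving
\[
\tilde{\varphi}^o\!\bigl(F^{n_\nu}_{\varepsilon_\nu}(p)\bigr) - \tilde{\varphi}^\iota(p) \;=\; n_\nu - \frac{\pi}{\varepsilon_\nu} + o(1) \;\longrightarrow\; \alpha.
\]

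Since $q_0 = L_\alpha(p_0)$ and the one-dimensional Lavaurs map is characterized in Fatou coordinates by $\tilde{\varphi}^o \circ L_\alpha = \tilde{\varphi}^\iota + \alpha$, we have $\tilde{\varphi}^o(q_0) = \alpha + \tilde{\varphi}^\iota(p_0)$. The triangle inequality
\[
\abs{\tilde{\varphi}^o\!\bigl(F^{n_\nu}_{\varepsilon_\nu}(p)\bigr) - \tilde{\varphi}^o(q_0)} \leq \abs{\tilde{\varphi}^o\!\bigl(F^{n_\nu}_{\varepsilon_\nu}(p)\bigr) - \tilde{\varphi}^\iota(p) - \alpha} + \abs{\tilde{\varphi}^\iota(p) - \tilde{\varphi}^\iota(p_0)}
\]
bounds the right-hand side by $o(1) + \eta$. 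Choosing $\eta < \delta/2$ and $\nu$ large enough completes the proof.

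The main obstacle is ensuring that both convergences above hold \emph{uniformly} over the (a priori non-compact) set of $p$ under consideration. For the incoming side, since $\tilde{\varphi}^\iota$ diverges as $p \to 0$, the set $(\tilde{\varphi}^\iota)^{-1}(\mathbb{D}(\tilde{\varphi}^\iota(p_0),\eta))$ is bounded away from the origin, and after shrinking $\tilde{C}_0$ one obtains a relatively compact subset on which Theorem \ref{lemma_conv_fai} provides uniform convergence. For the outgoing side, one needs the images $F^{n_\nu}_{\varepsilon_\nu}(p)$ to stay within a compact subset of $-\tilde{C}_0$; this can be guaranteed from Theorem \ref{teo_lavaurs_convergence} applied to the invariant line $\{y=0\}$ (which gives $F^{n_\nu}_{\varepsilon_\nu}(p_0) \to q_0$ in the interior of $-\tilde{C}_0$), combined with the $y$-contraction estimates of Section \ref{section_fasi} that keep nearby orbits close to the reference one at time $n_\nu$.
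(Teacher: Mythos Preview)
Your argument is essentially the paper's: split $n_\nu = m_\nu + m'_\nu$ into two bounded-type halves, use the exact identity $\tilde w^o_\varepsilon - \tilde w^\iota_\varepsilon \equiv -\pi/\varepsilon$ at the intermediate point, and invoke Theorem~\ref{lemma_conv_fai} and Corollary~\ref{cor_conv_fao}. The paper writes this as the single chain
\[
\tilde\phi^o_{\varepsilon_\nu,m'_\nu}\bigl(F^{n_\nu}_{\varepsilon_\nu}(p)\bigr)
= \tilde\phi^\iota_{\varepsilon_\nu,m_\nu}(p) + n_\nu - \frac{\pi}{\varepsilon_\nu},
\]
which is exactly your subtraction.

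One comment on your final paragraph: you work harder than necessary on the outgoing uniformity, and in a way that risks a forward dependence. You do \emph{not} need to show that the images $F^{n_\nu}_{\varepsilon_\nu}(p)$ stay in a compact subset of $-\tilde C_0$ via the one-dimensional Lavaurs theorem and the $y$-estimates of Section~\ref{section_fasi}; that is essentially the content of Lemma~\ref{lemma_stima_verticale_globale}, which comes \emph{after} this lemma. The statement already takes the intersection with $-\tilde C_0$, and the parenthetical ``after possibly shrinking $\tilde C_0$'' is precisely what makes this intersection a relatively compact subset of the original $-\tilde C_0$, on which Corollary~\ref{cor_conv_fao} gives uniform convergence. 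So the shrinking alone handles both the incoming and outgoing uniformity; no orbit control is needed here.
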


The need of shrinking $\t C_0$
is just due to the fact that Theorem \ref{lemma_conv_fai} and Corollary
\ref{cor_conv_fao}
give the convergence
on compact subsets of $\t C_0$ (and $-\t C_0$).

\begin{proof}
 Let $m^o_\nu$ and $m^\iota_\nu$
 be sequences of bounded type such that $m^\iota_\nu
 + m^o_\nu = n_\nu$.
By definition of $\t \fai_{\eps, n}$ and $\t \fao_{\eps, n}$
we have
\begin{equation}\label{eq_semiconj}
\begin{aligned}
\t \fao_{\eps_\nu, m^o_\nu} \circ F^{n_\nu}_{\eps_\nu} (p)
& =
 \tilde w_{\eps_\nu}
 \pa{F^{-m^o}_{\eps_\nu}
 (F_{\eps_\nu}^{n_\nu} (p))}
 - \frac{\pi}{2\eps_\nu} + m^o_\nu\\
 & =  \tilde w_{\eps_\nu} \pa{F_{\eps_\nu}^{m^\iota_{\eps_\nu}} (p)} - \frac{\pi}{2\eps_\nu} - m^\iota_\nu + n_\nu\\
 & = \t \fai_{\eps_\nu, m^o_\nu} (p) + n_\nu - \frac{\pi}{\eps_\nu}
\end{aligned}
\end{equation}
whenever $F_{\eps_\nu}^{n_\nu} (p)\in -\t C_0$.
The assertion follows from Theorem \ref{lemma_conv_fai} and Corollary \ref{cor_conv_fao}.
\end{proof}

\begin{lemma}\label{lemma_stima_verticale_globale}
Let $p_0 \in \t C_0
\cap \set{y=0}$ and $(\eps_\nu, n_\nu)$
be a $\alpha$-sequence.
Assume that $q_0 :=L_\alpha (p_0)$
 belongs to $- \t C_0 \cap \set{y=0}$.
Then, for every
polydisc $\Delta_{q_0}$ centered at $q_0$
and contained in $-\t C_0$
there exists a polydisc $\Delta_{p_0}$
centered at $p_0$ and contained in $\t C_0$
such that $F^{n_\nu}_{\eps_\nu} \pa{\Delta_{p_0}} \subset \Delta_{q_0}$
for $\nu$ sufficiently large.
\end{lemma}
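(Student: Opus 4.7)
The plan is to separately control the first and second coordinates of $F^{n_\nu}_{\eps_\nu}(p)$ for $p$ in a sufficiently small bidisc $\Delta_{p_0}$ around $p_0$, and to shrink $\Delta_{p_0}$ until both coordinates land in $\Delta_{q_0}$. Since $y(q_0)=0$, I may write $\Delta_{q_0}=\D(x(q_0),r)\times \D(0,s)$ for some $r,s>0$.

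For the first coordinate the key tool is Lemma \ref{lemma_conv_orizzontale}. The restriction of $\t\fao$ to $-\t C_0\cap \set{y=0}$ coincides (up to an additive constant) with the standard one-dimensional outgoing Fatou parametrization, and is hence a local biholomorphism at $q_0$. I therefore choose $\delta>0$ small enough that $|\t\fao(z)-\t\fao(q_0)|<\delta$ and $z\in -\t C_0$ close to $\set{y=0}$ together force $|x(z)-x(q_0)|<r$. Lemma \ref{lemma_conv_orizzontale} then yields the corresponding $\eta>0$, and by continuity of $\t\fai$ at $p_0$ I shrink $\Delta_{p_0}$ so that $\t\fai(\Delta_{p_0})\subset \D(\t\fai(p_0),\eta)$. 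This secures the first-coordinate bound whenever $F^{n_\nu}_{\eps_\nu}(p)\in -\t C_0$.

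For the second coordinate I track $|y(F^j_{\eps_\nu}(p))|$ through the three phases of Section \ref{section_fasi}. Phase~1 ($0\le j\le n_p(\eps_\nu)$) is controlled by Proposition \ref{prop_stima_y_prima} applied to $F_{\eps_\nu}$, giving a decay factor $A_\nu=\prod_{l=M^+}^{M^++n_p(\eps_\nu)-1}(1-\t\rho/l)\sim|\eps_\nu|^{\t\rho}$; phase~2 by Proposition \ref{prop_stima_y_centro}, which adds the uniform factor $e^{4\pi\rho\tau}$; and phase~3 by applying Proposition \ref{prop_stima_y_prima} to the conjugated map $H_{\eps_\nu}$ starting from the image of $q:=F^{n_\nu}_{\eps_\nu}(p)$ under the involution $(x,y)\mapsto(-x,y)$, which lies in $\t C_0$. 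Because $F_{\eps_\nu}$ and $H_{\eps_\nu}$ share the director $\rho-1$, the $|\eps_\nu|^{\t\rho}$ decay of phase~1 is matched by a reciprocal growth in phase~3, leaving a total multiplicative factor bounded uniformly in $\nu$. This yields a uniform bound $|y(F^{n_\nu}_{\eps_\nu}(p))|\le C\,|y(p)|$, and shrinking the $y$-direction of $\Delta_{p_0}$ below $s/C$ takes care of the second coordinate.

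The main technical obstacle lies in the phase~3 step: Proposition \ref{prop_stima_y_prima} applied to $H_{\eps_\nu}$ directly gives only an upper bound on $|y|$ of earlier $F_{\eps_\nu}$-iterates in terms of $|y(F^{n_\nu}_{\eps_\nu}(p))|$, which translates to a \emph{lower} bound on the latter rather than the upper bound we need. To extract the required upper bound one must combine this relation at $j=n_\nu-n'_p(\eps_\nu)$ with the matching phase~1 estimate on $|y|$ at the entry of $\t D_{\eps_\nu}$ and then exploit the one-dimensional Lavaurs convergence of the restriction to $\set{y=0}$: by Lemma \ref{lemma_confronto_modello} the $x$-orbit stays close to the model orbit $\eps_\nu\tan(\eps_\nu j+c)$, so the Riemann sum $\sum\rho\, x_j$ approaches the finite integral $\rho\log\!\bigl(x(q_0)/|x(p_0)|\bigr)$ uniformly in $\nu$, which bounds the multiplicative product $\prod|1+\rho x_j+\ldots|$ uniformly. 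Once this bound is in hand, combining the first- and second-coordinate controls produces $F^{n_\nu}_{\eps_\nu}(\Delta_{p_0})\subset \Delta_{q_0}$ for $\nu$ sufficiently large.
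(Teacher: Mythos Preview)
Your overall architecture---controlling the first coordinate via Lemma~\ref{lemma_conv_orizzontale} and splitting the $y$-evolution into the three phases of Section~\ref{section_fasi}---matches the paper, and you correctly pinpoint the real obstacle: Proposition~\ref{prop_stima_y_prima} applied to $H_{\eps_\nu}$ only yields a \emph{lower} bound on $\abs{y(F_{\eps_\nu}^{n_\nu}(p))}$.

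The gap is in your proposed resolution of that obstacle. Your Riemann-sum argument requires that the full $x$-orbit, including phase~3, stays uniformly close to the model $\eps_\nu\tan(\eps_\nu j+c)$. But Lemma~\ref{lemma_confronto_modello} is stated only for $0\le j\le n_p(\eps)$; after $n'_p(\eps)$ the orbit has left $\t C_\eps\cup\t D_\eps$, and the proof of that lemma uses the bound $\abs{y}\le s\abs{x}$, which you do not yet have in phase~3. The same problem contaminates the product you want to bound: the factor $1+\rho x_j+\beta_{\eps_\nu}(x_j,y_j)$ involves $y_j$ through $\beta_{\eps_\nu}=O_2(x,y)+O(\eps^2)$, so without a priori control of $\abs{y_j}$ in phase~3 you cannot bound the product even if the $x_j$ were close to the model. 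This makes the argument circular.

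The paper sidesteps this by never estimating phase~3 forward. Instead it compares the phase-1 contraction product for $F_{\eps_\nu}$ starting from $\Delta_{p_0}$ with the phase-1 contraction product for $H_{\eps_\nu}$ starting from $\partial\Delta_{q_0}\subset -\t C_0$ (where $\abs{y}\le s\abs{x}$ \emph{is} known). The crucial ingredient you are missing is the built-in asymmetry of $D_\eps$ in~\eqref{eq_zona_de}: the entry threshold is at $K/\aeps$ but the exit threshold is at $K/(2\aeps)$, which forces $n_p(\eps_\nu)+K_1\ge n_\nu-n'_p(\eps_\nu)$ for a constant $K_1$ independent of~$\nu$. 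Thus phase~1 has at least as many steps as phase~3, and since Lemma~\ref{lemma_confronto_fh} makes the corresponding factors comparable and each factor is below~$1$, the phase-1 product for $F$ is dominated by the (reversed) phase-3 product for $H$. This is exactly the inequality~\eqref{eq_prodotto_finale}, and it delivers the needed upper bound without ever touching the uncontrolled part of the forward orbit.
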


\begin{proof}
Set $\Delta_{q_0} = \D_{q_0}^1 \times \D_{q_0}^2$
and analogously
 $\Delta_{p_0} = \D_{p_0}^1 \times \D_{p_0}^2$.
By Lemma \ref{lemma_conv_orizzontale}
it is enough to prove that, if $\Delta_{p_0}$
is sufficiently small, for every $\nu$ sufficiently
large we have
\[
\max_{\D_{p_0}^1 \times \partial \D_{p_0}^2}  \abs{y (F_{\eps_\nu}^{m^\iota_\nu})}
\leq \frac{1}{2} \min_{\D_{q_0}^1 \times \partial \D_{q_0}^2}
\abs{y (F_{\eps_\nu}^{-m^o_\nu})}.
\]
We shall use the estimates collected in Section \ref{section_fasi}.
First of all,
notice that, by Proposition \ref{prop_stima_y_centro}, it is enough to prove that
\[
\max_{p\in \D_{p_0}^1 \times \partial \D_{p_0}^2}
 \abs{y (F_{\eps_\nu}^{n_p (\eps_\nu)})}
\leq c \min_{q \in -\D_{q_0}^1 \times \partial \D_{q_0}^2}
\abs{y (H_{\eps_\nu}^{n_\nu - n'_p (\eps_\nu)})}
\]
for some constant $c$, where $H_\eps$ is as in \eqref{eq_family_he}.
Geometrically,
we want to ensure
that
the vertical expansion
in the third part of the orbit (i.e., after $n'_p (\eps)$)
is balanced by a suitable
contraction
during the first
part (i.e., up to $n_p(\eps)$).

This means proving that
\begin{equation}\label{eq_prodotto_finale}
\begin{aligned}
&\abs{\prod_{j=0}^{n_p (\eps)}  \pa{ 1+\rho x(F_{\eps_\nu}^j (p))
+\beta_{\eps_\nu} (x(F_{\eps_\nu}^j (p)), y(F_{\eps_\nu}^j (p)))
} 
}\\
&\leq
c'
\abs{
\prod_{j=0}^{n_\nu - n'_p (\eps_\nu)}  \pa{ 1+\rho x(H_{\eps_\nu}^j (p)) +\beta^H_{\eps_\nu} (x(H_{\eps_\nu}^j (p)), y(F_{\eps_\nu}^j (p))) }
}
\end{aligned}
\end{equation}
for some positive $c'$.
First of all, we claim that there exists a constant $K_1$ (independent from $\nu$)
such that $K_1 + n_p (\eps_\nu) \geq n_\nu - n'_p (\eps_\nu)$, i.e.,
the number of points in the orbit for $F_\eps$
before entering in $\t D_{\eps_\nu}$ (and thus in the contracting part)
is at least the same (up to the constant) of the number of points in the expanding part.
Indeed,
recalling that
the definition \eqref{eq_def_Ktau} of $K$,
we have
$\frac{K}{\rho'' \abs{\eps_\nu}} \geq \frac{\pi}{\abs{\eps_\nu}} - \frac{\pi-K/2}{\rho'' \abs{\eps_\nu}}$. So, by
Propositions \ref{prop_prima_stime_1} and \ref{prop_stima_lunghezza_centro} we have,
with $M^+$ as in \eqref{eq_MM_section_coord},
\[
\begin{aligned}
1+\abs{\alpha} + \frac{M^+}{\rho''} + n_p (\eps_\nu)&
\geq 1+ \abs{\alpha}  + \frac{M^+}{\rho''} + \frac{K}{\rho'' \abs{\eps_\nu}} - \frac{M^+}{\rho''}\\
&\geq 
n_\nu  - \frac{\pi}{\abs{\eps_\nu}} + \frac{\pi}{\abs{\eps_\nu}} - \frac{\pi - K/2}{\rho'' \abs{\eps_\nu}}
\geq n_\nu - n'_p (\eps_\nu)
\end{aligned}\]
for $\nu$ sufficiently large,
and the desired inequality is proved.
The inequality
\eqref{eq_prodotto_finale}
now follows from Lemma \ref{lemma_confronto_fh} (and Proposition \ref{prop_stima_prima_x_completa}),
and the assertion follows.
\end{proof}

We can now
prove
Theorem \ref{teo_lav_2d}.

\begin{proof}[Proof of Theorem \ref{teo_lav_2d}]
First of all, we can assume that $p_0$ belongs to $C_0 = \set{y=0} \cap \t C_0$. Indeed, there exists some
$N_0$ such that $F^{N_0}_0 (p_0) \in \t C_0$.
So, we can prove the Theorem
for the $(\alpha - N_0)$-sequence $(\eps_\nu, n_\nu - N_0)$
and the base point $F^{N_0}_0 (p_0)$ and the assertion then follows
since
$F_{\eps_\nu}^{N_0} \to F^{N_0}_{0}$.
For the same reason, we can assume that $q_0 := L_\alpha (p_0)$
belongs to $-\t C_0$.

By Lemma \ref{lemma_stima_verticale_globale}, there exists a polydisc $\Delta_{p_0}$
centered at $p_0$
such that the sequence $F^{n_\nu}_{\eps_\nu}$
is bounded on $\Delta_{p_0}$. In particular, up to a subsequence, this sequence converges to a limit map $T_\alpha$,
defined in $\Delta_{p_0}$
with values in $-\t C_0$.
Notice that the limit must be open, since the same arguments apply to the inverse system.
The relation \eqref{eq_teo_lav_2d_semiconj} then follows from \eqref{eq_semiconj} and the assertion follows.
\end{proof}

In the following,
given a subset $\Uu\subset \t C_0$,
we 
denote by
$\Tt_\alpha (\Uu)$ the set
\[
\Tt_\alpha (\Uu)
:=
\set{
T: \Uu \to \C^2 \colon \exists (\eps_\nu,n_\nu) \alpha-\mbox{sequence such that }
F_{\eps_\nu}^{n_\nu} \to T \mbox{ on } \Uu  
}
\]
We denote
by $\Tt_\alpha$ the union of all the $\Tt_\alpha (\Uu)$'s,
where $\Uu \subset \t C_0$,
and call the elements of $\Tt_\alpha$
\emph{Lavaurs maps}.
Theorem \ref{teo_lav_2d} can then be restated as follows: every compact subset $\Cc_0 \subset  C_0$ has a neighbouhhood
$\Uu_{\Cc_0} \subset \t C_0$ such that every
$\Tt_\alpha (\Uu_{\Cc_0})$
is not empty.

\begin{remark}
Computer experiments suggest that
given any $\alpha$-sequence $(\eps_\nu, n_\nu)$
there is a neighbourhood of $C_0$ in $\t C_0$
such that the sequence $F_{\eps_\nu}^{n_\nu}$
converges to a (unique) limit map $T_\alpha$, without the need of extracting a subsequence.
\end{remark}

\section{The discontinuity of the large Julia set}\label{section_disc_J}
In this section we shall prove Theorem \ref{teo_disc_J}. By means of the Lavaurs maps $T_\alpha$,
we first define a 2-dimensional analogous of the Julia-Lavaurs set $J^1(F_0, T_\alpha)$, and use this set
to estimate the discontinuity of the Julia set at $\eps =0$.

\begin{defi}\label{defi_lav_j}
Let $\Uu \subset \t C_0$ and $T_\alpha \in \Tt_\alpha (\Uu)$.
The Julia-Lavaurs set $J^1 (F_0, \alpha)$ is the set
\[
J^1 (F_0, T_\alpha) := \bar{\set{z \in \P^2 \vert \exists m \in \N
\colon
T_\alpha^m (z) \in J^1 (F_0)}}.
\]
\end{defi}

The condition $T_\alpha^m (z) \in J^1 (F_0)$ means that we require $T_\alpha^i (z)$ to be defined, for $i=0,\dots m$.
In particular,
we have
$z, \dots, T_{\alpha}^{m-1} (z) \in \Uu$.

From the definition it follows that
$J^1 (F_0)\subseteq J^1 (F_0, T_\alpha)$, for every $T_\alpha\in \Tt_{\alpha}$.
The following result gives the key estimate for the lower-semicontinuity of the large Julia sets at $\eps=0$.
The proof is analogous to the 1-dimensional case, exploiting the fact that the maps $T_\alpha$ are open.

\begin{teo}\label{teo_liminf_J}
 Let $T_\alpha \in \Tt_\alpha$ be defined on $\Uu\subset \t C_0$
and  $(n_\nu,\eps_\nu)$ be a $\alpha$-sequence
such that $F_{\eps_\nu}^{n_\nu} \to T_\alpha$ on $\Uu$.
Then
 \[
 \liminf J^1 (F_{\eps_\nu}) \supseteq J^1 (F_0, T_\alpha).
 \]
\end{teo}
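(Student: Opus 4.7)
The plan is to follow the one-dimensional strategy of Theorem \ref{teo_lavaurs_disc_JK}, using the openness statement of Theorem \ref{teo_lav_2d} as the key new ingredient. Since $\liminf J^1(F_{\eps_\nu})$ is automatically closed, by Definition \ref{defi_lav_j} it suffices to show that every $z$ with $T_\alpha^i(z)\in\Uu$ for $i=0,\dots,m-1$ and $T_\alpha^m(z)\in J^1(F_0)$ lies in $\liminf J^1(F_{\eps_\nu})$. I would then produce, for every polydisc $V$ around such $z$, points $z_\nu\in V\cap J^1(F_{\eps_\nu})$ for all $\nu$ large.

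The first step is to iterate the convergence of Theorem \ref{teo_lav_2d}: since each $T_\alpha^i(z)$ lies in the open set $\Uu$, a small enough polydisc $V$ centered at $z$ is mapped by $T_\alpha,T_\alpha^2,\ldots,T_\alpha^{m-1}$ into $\Uu$, and one composes the locally uniform convergences $F^{n_\nu}_{\eps_\nu}\to T_\alpha$ one step at a time to obtain
\[
F^{mn_\nu}_{\eps_\nu}\longrightarrow T_\alpha^m \qquad\text{locally uniformly on }V.
\]
The second step is to invoke the standard lower semicontinuity of the large Julia set for holomorphic families of endomorphisms of $\P^2$: from $T_\alpha^m(z)\in J^1(F_0)$ one obtains a sequence $w_\nu\in J^1(F_{\eps_\nu})$ with $w_\nu\to T_\alpha^m(z)$.

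The third step is the openness argument. By Theorem \ref{teo_lav_2d} the map $T_\alpha$ is open, hence so is the composition $T_\alpha^m$ on $V$; in particular $T_\alpha^m(V)$ is an open neighborhood of $T_\alpha^m(z)$. Combining openness of the limit with locally uniform convergence on a slight enlargement $V'\Supset V$, for every compact $K\subset T_\alpha^m(V)$ and every $\nu$ large one has $K\subset F^{mn_\nu}_{\eps_\nu}(V)$; in particular $w_\nu\in F^{mn_\nu}_{\eps_\nu}(V)$ for large $\nu$. Picking $z_\nu\in V$ with $F^{mn_\nu}_{\eps_\nu}(z_\nu)=w_\nu$ and using total invariance of $J^1$ under $F_{\eps_\nu}$ gives $z_\nu\in J^1(F_{\eps_\nu})$. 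Since $V$ is arbitrary this proves $z\in\liminf J^1(F_{\eps_\nu})$, and taking the closure finishes the argument.

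The main obstacle is making the openness-plus-convergence step rigorous in two variables: the assertion that if $\phi_\nu\to\phi$ locally uniformly on $\bar V$ and $\phi$ is an open holomorphic map, then $\phi_\nu(V)$ eventually covers any prescribed compact subset of $\phi(V)$. In dimension one this is an immediate consequence of Hurwitz's theorem, but in $\C^2$ one needs a several-variable analogue, which can be established via local degree theory for nondegenerate holomorphic maps or by slicing and reducing to the one-dimensional case along the directions in which $T_\alpha^m$ is locally open.
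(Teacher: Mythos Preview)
Your proposal is correct and follows essentially the same route as the paper: reduce to the dense set of points with $T_\alpha^m(z)\in J^1(F_0)$, use lower semicontinuity of $\eps\mapsto J^1(F_\eps)$ to get $w_\nu\to T_\alpha^m(z)$, compose the convergences $F_{\eps_\nu}^{n_\nu}\to T_\alpha$ to obtain $F_{\eps_\nu}^{mn_\nu}\to T_\alpha^m$ near $z$, and then use openness of $T_\alpha$ to lift $w_\nu$ to preimages $z_\nu\to z$. The paper's proof is terser and simply asserts that openness of $T_\alpha$ yields the required preimages; your version is more explicit about this step and honestly flags the several-variable Hurwitz-type issue, which the paper glosses over in the same way.
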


\begin{proof}
 The key ingredients are the lower semicontinuity of $J^1 (F_\eps)$ and Theorem \ref{teo_lav_2d}.
 By definition, the set of all $z$'s admitting an $m$
 such
 that $T_\alpha^m (z) \in J^1 (F_0)$
 is dense in $J^1 (F_0, T_\alpha)$. Thus, given $z_0$ and $m$
 satisfying the previous condition, we
 only need to find a sequence of points $z_\nu \in J^{1} (F_{\eps_\nu})$
 such that $z_\nu \to z_0$, for some sequence $\eps_\nu \to 0$.
 
 Set $p_0 := T^m_\alpha (z_0)$.
 By the lower semicontinuity
 of $\eps\mapsto J^1 (F_{\eps})$
we can find a sequence of points
 $p_\nu\in J^1 (F_{\eps_\nu})$
such that $p_\nu \to p_0$.
By Theorem \ref{teo_lav_2d} we have $F_{\eps_\nu}^{m n_\nu}\to T^m_{\alpha}$ uniformly near $z_0$,
and this (since $T_\alpha$ is open) gives a sequence $z_\nu$
converging to $z_0$ such that $F_{\eps_\nu}^{mn_\nu} (z_\nu) = p_\nu \in J^1 (F_{\eps_\nu})$.
This implies that $z_\nu \in J^1 (F_{\eps_\nu})$, and
the assertion follows.
\end{proof}

Notice the function $\eps \mapsto J^1(F_{\eps})$
is discontinuous at $\eps=0$ since, by means of just the one-dimensional
Lavaurs Theorem \ref{teo_lavaurs_convergence},
we can create points in $\t C_0 \cap \set{y=0}$
(which is contained in the Fatou set)
satisfying $L_\alpha (p) \in J^1 (F_0)$. 
Indeed, the following property holds:
\begin{equation}\label{eq_property_1dim}
\mbox{$\forall p\in \t C_0 \cap \set{y=0}$
there exists $\alpha$
 such that $p\in J^1 (\pa{F_0}_{|y=0}, L_\alpha)$.}
 \end{equation}
 where $L_\alpha$
 is the 1-dimensional Lavaurs map on the invariant line $\set{y=0}$
 associated to $\alpha$.
 Indeed, since $\partial \Bb \subseteq J^1 (F_0)$ and $\Bb$ intersects the repelling basin $\Rr$,
 we can find $q \in J^1 (F_0) \cap \set{y=0}$
 in the image of the Fatou parametrization $\psi^o$
 for $\pa{F_0}_{|\set{y=0}}$.
 The assertion follows considering $\alpha$ such that $L_\alpha (p)= q$.

In our context, given any $p\in C_0$ and $q \in -C_0$
as above, by means of Theorem \ref{teo_lav_2d}
we can consider a neighbourhood
of $p$ where a sequence $F_{\eps_\nu}^{n_\nu}$
converges to a Lavaurs map $T_\alpha$ (necessarily coinciding with $L_\alpha$
on the line $\set{y=0}$). Since $T_\alpha$ is open, we have that $T_\alpha^{-1} (J^1 (F_0))$
is contained in the liminf of the Julia sets $J^1 (f_{\eps_\nu})$.
This gives a two-dimensional estimate of the discontinuity.

\section{The discontinuity of the filled Julia set}\label{section_altro}

For regular polynomial endomorphism of $\C^2$ it is meaningful to consider the filled Julia set, defined in the following way.

\begin{defi}
Given a regular polynomial endomorphism $F$ of $\C^2$,
the \emph{filled Julia set} $K(F)$ is the set of points whose orbit is bounded.
\end{defi}

Equivalently, 
given any sufficently large ball $B_R$, such that $B_R \Subset F (B_R)$,
the filled Julia set
is equal to
\[
K (F) := \cap_{n\geq 0} F^{-n} (B_R).
\]

In this section we shall prove that, if the family \eqref{eq_family} is induced by regular polynomials,
then the set-valued function $\eps\to K(F_\eps)$ is discontinuos
at $\eps=0$.

Recall that the function $\eps \to K(F_\eps)$ is always upper semicontinuous (see \cite{douady94}).
Here the key definition
will be the following analogous of the filled Lavaurs-Julia set in dimension 1 (\cite{lavaurs}).

\begin{defi}
Given $\Uu \subset \t C_0$ and $T_\alpha \in \Tt_\alpha (\Uu)$, the
 filled Lavaurs-Julia set $K (F_0, T_\alpha)$
 is the complement of the points $p$ such that there exists $m\geq 0$
 such that
 $T_\alpha^m (p)$ is defined and is not in $K (F_0)$.
\end{defi}

Notice in particular that $K(F_0, T_\alpha) \subseteq K(F_0)$ and coincides with $K(F_0)$
outside
$\Uu$. Moreover, notice that $K(F_0, T_\alpha)$
is closed.

\begin{teo}\label{teo_limsup_K}
Let $T_\alpha \in \Tt_\alpha$ be defined on some
$\Uu \subset \t C_0$.
and let 
$(\eps_\nu, n_\nu)$ be an $\alpha$-sequence such that $F^{n_\nu}_{\eps_\nu}\to T_\alpha$
on $\Uu$.
Then
 \[
K (F_0, T_\alpha) \supseteq \limsup
K (F_{\eps_\nu}).
 \]
\end{teo}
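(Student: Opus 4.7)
I would argue by contraposition: suppose $p \notin K(F_0, T_\alpha)$ and deduce $p \notin \limsup K(F_{\eps_\nu})$. By definition of the filled Lavaurs--Julia set, there is some $m \geq 0$ such that $q := T_\alpha^m(p)$ is defined (so that $T_\alpha^i(p) \in \Uu$ for $i=0,\dots,m-1$) and $q \notin K(F_0)$. In particular the forward orbit of $q$ under $F_0$ is unbounded, so for any $R>0$ there exists $k = k(R) \geq 0$ with $\|F_0^k(q)\| > 2R$.

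The first step is to iterate the locally uniform convergence of Theorem \ref{teo_lav_2d}. Because $T_\alpha$ is open and each $T_\alpha^i(p)$ lies in the open set $\Uu$, one may choose an open neighborhood $V_0$ of $p$ and open neighborhoods $W_i$ of $T_\alpha^i(p)$ with $\bar W_i \subset \Uu$, such that $T_\alpha(\bar V_0) \subset W_1$ and $T_\alpha(\bar W_i) \subset W_{i+1}$. The uniform convergence $F_{\eps_\nu}^{n_\nu} \to T_\alpha$ on each $\bar V_0, \bar W_1, \dots, \bar W_{m-1}$ then implies $F_{\eps_\nu}^{j n_\nu}(\bar V_0) \subset W_j$ for $\nu$ large, and a standard composition argument yields $F_{\eps_\nu}^{m n_\nu} \to T_\alpha^m$ uniformly on $V_0$. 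In particular, for any sequence $p_\nu \to p$ we obtain $F_{\eps_\nu}^{m n_\nu}(p_\nu) \to q$.

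The second step invokes the regular polynomial hypothesis to produce a uniform escape radius. Since $(F_\eps)$ is a holomorphic family of regular polynomial endomorphisms of $\C^2$ of fixed degree $d$, their top-degree homogeneous parts depend continuously on $\eps$ and never vanish away from the origin; by a standard compactness argument there exists $R > 0$ such that $\|F_\eps(z)\| \geq 2\|z\|$ for all $\|z\| \geq R$ and all sufficiently small $\eps$. Consequently $K(F_\eps) \subset \bar B_R$ uniformly in small $\eps$. Now apply the previous step with this $R$: combining $F_{\eps_\nu}^{m n_\nu}(p_\nu) \to q$ with the local uniform convergence $F_{\eps_\nu}^k \to F_0^k$ on compacts gives $F_{\eps_\nu}^{m n_\nu + k}(p_\nu) \to F_0^k(q)$, hence $\|F_{\eps_\nu}^{m n_\nu + k}(p_\nu)\| > R$ for $\nu$ large, and therefore $p_\nu \notin K(F_{\eps_\nu})$. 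Since this holds for every sequence $p_\nu \to p$, we conclude $p \notin \limsup K(F_{\eps_\nu})$, as desired.

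The main obstacle is the uniform escape radius, which is where the regular polynomial assumption is genuinely used: without it one could not translate escape of a single point's orbit into the statement $p_\nu \notin K(F_{\eps_\nu})$, because each $K(F_{\eps_\nu})$ is defined through its own, possibly ever-growing, containing ball. The iterative bootstrap from Theorem \ref{teo_lav_2d} to convergence of $F_{\eps_\nu}^{m n_\nu}$ is routine once one remembers that $T_\alpha$ is open, which is already guaranteed by the proof of that theorem.
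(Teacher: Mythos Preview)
Your proof is correct and follows essentially the same contrapositive strategy as the paper's, using the convergence $F_{\eps_\nu}^{m n_\nu}\to T_\alpha^m$ on a neighbourhood of $p$ to push points outside the perturbed filled Julia sets. The paper packages the final step by citing the upper semicontinuity of $\eps\mapsto K(F_\eps)$ (and Douady's closedness criterion) rather than building an explicit uniform escape radius as you do, but for regular polynomials these amount to the same thing and the substance of the argument is identical.
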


\begin{proof}
Since the set-valued function $\eps\mapsto K_\eps$ is upper-semicontinuous,
there exists a large ball $B$ such that, for $\nu \geq \nu_0$, we have $\cup_\nu K (F_{\eps_\nu})\subset B$. Without
loss of generality we can assume that $\nu_0 =1$.
Let us consider the space
\[P:= \set{ \set{0} \cup \bigcup_\nu \{\eps_\nu\}} \times B\]
and its subset $X$
given by
\[X := \set{(0,z) \colon x \in K (F_0, T_\alpha)} \cup  \bigcup_\nu \set{(\eps_\nu, z)\colon z\in K (F_{\eps_\nu})}.\]
By \cite[Proposition 2.1]{douady94} and the fact that $P$ is compact, it is enough to prove that $X$ is closed in $P$.
This follows from Theorem \ref{teo_lav_2d}. Indeed, let $z$ be in the complement of $K(F_0, T_\alpha)$. Since this set is closed,
a small ball $B_z$ around $z$ is outside $K(F_0, T_\alpha)$,
too.
By definition, this means that, for some $m$, we have $T^m (B_z) \subset K(F_0)^c$.
Theorem \ref{teo_lav_2d} implies that, up to shrinking the ball $B_z$, we have
$F_{\eps_\nu}^{n_\nu} (B_z)\subset K(F_0)^c$ for $\nu$ sufficiently
large.
The upper semicontinuity of $\eps\mapsto K (F_\eps)$
then implies that
$F_{\eps_\nu}^{n_\nu} (B_z)\subset K(F_{\eps_\nu})^c$,
for $\nu$ large
enough.
So, $B_z \subset K(F_{\eps_\nu})^c$ and this gives the assertion.
\end{proof}

\begin{cor}
 Let $F_\eps$ be a holomorphic family of
 regular polynomials of $\C^2$
 as in \eqref{eq_family}. Then the set-valued function
$\eps \mapsto K (F_\eps)$ is discontinuous at $\eps=0$.
\end{cor}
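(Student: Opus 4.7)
The strategy is to lift the one-dimensional discontinuity of the filled Julia set on the invariant line $\{y=0\}$ to a genuinely two-dimensional discontinuity, using Theorem \ref{teo_lav_2d} together with Theorem \ref{teo_limsup_K}. The line $\{y=0\}$ is forward-invariant under every $F_\eps$ (the second component of $F_\eps$ vanishes when $y=0$), so $f_\eps := F_\eps|_{\{y=0\}}$ is a one-parameter holomorphic family of polynomials of the form $x \mapsto x + x^2 + \eps^2 + O(x^3, \eps^2 x)$, to which Theorems \ref{teo_lavaurs_convergence} and \ref{teo_lavaurs_disc_JK} apply directly. The invariance also yields the identification $K(F_0) \cap \{y=0\} = K(f_0)$.

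Using the one-dimensional Lavaurs discontinuity for the family $f_\eps$, I select $\alpha \in \C$ and $p_0 \in \Bb \cap \{y=0\}$ such that the corresponding 1D Lavaurs map $L_\alpha$ satisfies $L_\alpha(p_0) \notin K(f_0)$: indeed $L_\alpha(p_0) = \psi^o(\alpha + \fai(p_0))$ lies in the repelling basin $\Rr$ on $\{y=0\}$, and since $f_0$ is a polynomial of degree at least $2$ its escape set meets $\Rr$ in a nonempty open subset, so $\alpha$ can be adjusted accordingly. By the invariance identity above, $L_\alpha(p_0) \notin K(F_0)$.

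Next I apply Theorem \ref{teo_lav_2d} with the compact set $\Cc = \{p_0\}$ and the associated $\alpha$-sequence $(\eps_\nu, n_\nu)$: after extracting a subsequence, $F_{\eps_\nu}^{n_\nu} \to T_\alpha$ on a neighborhood $\Uu$ of $p_0$ in $\t C_0$. Since each $F_{\eps_\nu}^{n_\nu}$ preserves $\{y=0\}$, the restriction $T_\alpha|_{\Uu \cap \{y=0\}}$ is a subsequential limit of $f_{\eps_\nu}^{n_\nu}|_{\Uu \cap \{y=0\}}$, and by Theorem \ref{teo_lavaurs_convergence} this limit must coincide with $L_\alpha$. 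Hence $T_\alpha(p_0) = L_\alpha(p_0) \notin K(F_0)$, so $p_0 \notin K(F_0, T_\alpha)$ by the definition of the filled Lavaurs-Julia set (taking $m=1$). Theorem \ref{teo_limsup_K} then forces $p_0 \notin \limsup K(F_{\eps_\nu})$, while $p_0 \in \Bb \subset K(F_0)$ because orbits of points in $\Bb$ converge to the origin under $F_0$ and are therefore bounded. The strict containment $\limsup K(F_{\eps_\nu}) \subsetneq K(F_0)$, together with the already known upper semicontinuity, is precisely the statement that $\eps \mapsto K(F_\eps)$ is discontinuous at $\eps=0$.

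The only point of real delicacy is the one-dimensional selection ensuring $L_\alpha(p_0) \notin K(f_0)$: this is a classical consequence of Lavaurs' 1D theory, resting on the fact that $\psi^o$ parametrizes the full repelling basin $\Rr$ and that $\Rr$ meets the (open) escape set of $f_0$ for any nontrivial parabolic polynomial. Once this ingredient is in place, the lift to $\C^2$ is essentially bookkeeping via Theorem \ref{teo_lav_2d} restricted to the invariant line, with the conclusion forced by Theorem \ref{teo_limsup_K}.
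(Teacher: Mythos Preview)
Your proof is correct and follows essentially the same route as the paper's: both arguments find a point $p_0\in \t C_0\cap\{y=0\}\subset K(F_0)$ and an $\alpha$ with $L_\alpha(p_0)\notin K(F_0)$, then invoke Theorem \ref{teo_lav_2d} to obtain a two-dimensional Lavaurs map $T_\alpha$ agreeing with $L_\alpha$ on the invariant line, so that $p_0\notin K(F_0,T_\alpha)$ and Theorem \ref{teo_limsup_K} yields the discontinuity. The only cosmetic difference is that the paper phrases this as a contradiction (continuity would force $K(F_0,T_\alpha)=K(F_0)$), whereas you argue directly that $\limsup K(F_{\eps_\nu})\subsetneq K(F_0)$.
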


\begin{proof}
The argument is the same used to prove the discontinuity
of $J^1 (F_\eps)$
in Section \ref{section_disc_J}.
If the function $\eps \to K(F_\eps)$
were continuous, Theorem \ref{teo_limsup_K} and the fact that $K(F_0,T_\alpha) \subseteq K(F_0)$ for every $\alpha$ would imply that
all the $K(F_0, T_\alpha)$'s were equal to $K(F_0)$. Since $\t C_0 \subseteq K(F_0)$, it is enough to find $p\in \t C_0$
and $\alpha$ such that $p\notin K(F_0, T_\alpha)$.
To do this, it is enough
to take any point $q$ in
$\set{y=0}$
not contained in $K(F_0)$ (recall that $K(F_0)$
is compact) and then consider a point $p\in \t C_0 \cap \set{y=0}$
and  $\alpha$
such that $L_\alpha (p)=q$.
The existence of such points is a consequence of the property \eqref{eq_property_1dim}.
Then, consider a neighbourhood $\Uu$ of $p$ such that some sequence $F^{n_\nu}_{\eps_\nu}$
converges to a Lavaurs map $T_\alpha$ on $\Uu$. The assertion follows since $T_\alpha$ is open and
coincides with $L_\alpha$ on the intersection with the invariant line $\set{y=0}$.
\end{proof}

\bibliography{bib_parab_2d.bib}{}

\newcommand{\etalchar}[1]{$^{#1}$}
\begin{thebibliography}{ABD{\etalchar{+}}16}

\bibitem[Aba15]{abate2015fatou}
Marco Abate.
\newblock Fatou flowers and parabolic curves.
\newblock In {\em Complex Analysis and Geometry}, pages 1--39. Springer, 2015.

\bibitem[ABD{\etalchar{+}}16]{astorg2016two}
Matthieu Astorg, Xavier Buff, Romain Dujardin, Han Peters, and Jasmin Raissy.
\newblock A two-dimensional polynomial mapping with a wandering {F}atou
  component.
\newblock {\em Annals of Mathematics}, 184:263--313, 2016.

\bibitem[AL15]{avila2015lebesgue}
Artur Avila and Mikhail Lyubich.
\newblock Lebesgue measure of {F}eigenbaum {J}ulia sets.
\newblock {\em arXiv preprint arXiv:1504.02986}, 2015.

\bibitem[AR13]{arformal}
Marco Abate and Jasmin Raissy.
\newblock Formal {P}oincar{\'e}-{D}ulac renormalization for holomorphic germs.
\newblock {\em Discrete and Continuous dynamical systems}, 33(5):1773--1807,
  2013.

\bibitem[AR14]{arizzi2014ecalle}
Marco Arizzi and Jasmin Raissy.
\newblock On {E}calle-{H}akim’s theorems in holomorphic dynamics.
\newblock {\em Frontiers in complex dynamics. Eds. A. Bonifant, M. Lyubich, S.
  Sutherland. Princeton University Press, Princeton}, pages 387--449, 2014.

\bibitem[BC08]{buff2008quadratic}
Xavier Buff and Arnaud Ch{\'e}ritat.
\newblock Quadratic {J}ulia sets with positive area.
\newblock {\em Annals of Mathematics}, 2008.

\bibitem[BSU12]{bsu}
Eric Bedford, John Smillie, and Tetsuo Ueda.
\newblock Parabolic bifurcations in complex dimension 2.
\newblock {\em arXiv preprint arXiv:1208.2577}, 2012.

\bibitem[CS15]{cheraghi2015satellite}
Davoud Cheraghi and Mitsuhiro Shishikura.
\newblock Satellite renormalization of quadratic polynomials.
\newblock {\em arXiv preprint arXiv:1509.07843}, 2015.

\bibitem[DL13]{dujardin2013stability}
Romain Dujardin and Mikhail Lyubich.
\newblock Stability and bifurcations for dissipative polynomial automorphisms
  of {$\mathbb{C}^2$}.
\newblock {\em Inventiones mathematicae}, 200(2):439--511, 2013.

\bibitem[Dou94]{douady94}
Adrien Douady.
\newblock L'ensemble de {J}ulia d{\'e}pend-il contin{\^u}ment du polyn{\^o}me?
\newblock {\em Journ{\'e}s X-UPS}, pages 35--77, 1994.

\bibitem[DS10]{ds_cime}
Tien-Cuong Dinh and Nessim Sibony.
\newblock Dynamics in several complex variables: endomorphisms of projective
  spaces and polynomial-like mappings.
\newblock In {\em Holomorphic dynamical systems}, pages 165--294. Springer,
  2010.

\bibitem[Hak97]{hakim97}
Monique Hakim.
\newblock Transformations tangent to the identity, {S}table pieces of
  manifolds.
\newblock {\em Preprint}, 1997.

\bibitem[Hak98]{hakim98}
Monique Hakim.
\newblock Analytic transformations of {$(\C^p, 0)$} tangent to the identity.
\newblock {\em Duke mathematical journal}, 92(2):403--428, 1998.

\bibitem[Lav89]{lavaurs}
Pierre Lavaurs.
\newblock {\em Systemes dynamiques holomorphes: explosion de points
  p{\'e}riodiques paraboliques}.
\newblock PhD thesis, Paris 11, 1989.

\bibitem[Shi98]{shishikura1998hausdorff}
Mitsuhiro Shishikura.
\newblock The {H}ausdorff dimension of the boundary of the {M}andelbrot set and
  {J}ulia sets.
\newblock {\em Annals of Mathematics}, pages 225--267, 1998.

\bibitem[Wei98]{weickert98}
Brendan~J Weickert.
\newblock Attracting basins for automorphisms of {$\C^2$}.
\newblock {\em Inventiones mathematicae}, 132(3):581--605, 1998.

\end{thebibliography}
\bibliographystyle{alpha}

\end{document}